\newtheorem{theorem}{Theorem}[subsection]
\newtheorem{corollary}[theorem]{Corollary}
\newtheorem{lemma}[theorem]{Lemma}
\newtheorem{proposition}[theorem]{Proposition}
\newtheorem{remark}[theorem]{Remark}
\theoremstyle{definition}
\newtheorem{definition}{Definition}[section]
\crefname{subsection}{subsection}{subsections}
\Crefname{subsection}{Subsection}{Subsections}
\crefname{paragraph}{paragraph}{paragraphs}
\Crefname{paragraph}{Paragraph}{Paragraphs}
\newcommand\subtm{\preceq}
\newcommand\psubtm{\prec}
\newcommand\hole{\Box}
\newcommand\plug[2]{#1[#2]}
\newcommand\oeis[1]{\href{https://oeis.org/#1}{\textbf{#1}}}
\newcommand{\convergesInDist}{\overset{D}{\rightarrow}}
\newcommand*{\ditto}{---\texttt{//}---}
\newcommand{\singleton}{\mathcal{Z}}
\newcommand\closedTerms[1][]{{{\dot{\mathcal{T}}}_{[0]#1}}}
\newcommand{\lamClosedOGF}{T}
\newcommand{\closedAbsClass}{\closedTerms^{\lambda}}
\newcommand{\bridgelessTerms}{{\mathcal{\dot{B}}}_{[0]}}
\newcommand{\oneBridgeTerms}{\mathcal{\dot{O\!B}}_{[0]}}
\newcommand{\oneVarOpBridgeTerms}{{\mathcal{\dot{B}}}_{[1]}}
\newcommand{\contextClass}{\mathcal{K}}
\newcommand{\contextOGF}{K}
\newcommand{\qClass}{\mathcal{Q}}
\newcommand{\qOGF}{Q}
\newcommand{\lamClosedIdOGF}{T^{id}}
\newcommand{\lamClosedSubOGF}{T^{sub}}
\newcommand{\abstractionSubOGF}{T^{\lambda}_{sub}}
\newcommand{\oneThreeMaps}{\mathcal{T}}
\newcommand{\discoOneThreeMaps}{\oneThreeMaps^{d}}
\newcommand{\openTerms}{\mathcal{\dot{\oneThreeMaps}}}
\newcommand{\lamOpenOGF}{T}
\newcommand{\twoThreeMaps}{\mathcal{D}}
\newcommand{\rootedTwoThreeMaps}{\dot{\twoThreeMaps}}
\newcommand{\discoTwoThreeMaps}{\twoThreeMaps^{d}}
\newcommand{\affineTerms}{\mathcal{A}}
\newcommand{\twoThreeMapsOGF}{D}
\newcommand{\affineTermsOGF}{A}
\newcommand{\pathClass}{\mathcal{P}}
\newcommand{\balancedPart}[1][]{\mathrm{B}_{#1}}
\newcommand{\alphasSummed}[1][]{\mathrm{D}_{#1}}
\newcommand{\alphasWSummed}[1][]{\mathrm{C}_{#1}}
\newcommand*{\textcal}[1]{%
  \textit{\fontfamily{qzc}\selectfont#1}%
}
\newcommand{\loopMap}{%
\begin{tikzpicture}[scale=0.8, every loop/.style={}, baseline=-2mm, thick]
  \node [fill=black,circle,minimum size=4pt,inner sep=0pt,outer sep=0pt] (1) {} edge [in=-50,out=-130,loop] ();
  \node [draw=black,fill=white,circle,minimum size=2pt,inner sep=0pt,outer sep=0pt] (2) [above=0.1cm of 1] {};
  \draw (2.south) to (1.north);
\end{tikzpicture}%
}
\newcommand{\oneEdgeMap}{%
\begin{tikzpicture}[scale=1, every loop/.style={}, baseline=-2.5mm, thick]
\node [draw=black,fill=white,circle,minimum size=2pt,inner sep=0pt,outer sep=0pt] (1) {};
\node [fill=black,circle,minimum size=4pt,inner sep=0pt,outer sep=0pt] (2) [below=0.2cm of 1] {};
\draw (1.south) to (2.north);
\end{tikzpicture}%
}
\title{Asymptotic Distribution of Parameters in \linebreak  Trivalent Maps and Linear Lambda Terms}
\author{Olivier Bodini \and Alexandros Singh \and Noam Zeilberger}
\begin{document}
\maketitle

\abstract{Structural properties of large random maps and $\lambda$-terms may be gleaned by studying the limit distributions of various parameters of interest. In our work we focus on restricted classes of maps and their counterparts in the $\lambda$-calculus, building on recent bijective connections between these two domains. In such cases, parameters in maps naturally correspond to parameters in $\lambda$-terms and vice versa.
By an interplay between $\lambda$-terms and maps, we obtain various combinatorial specifications which allow us to access the distributions of pairs of related parameters such as: the number of bridges in rooted trivalent maps and of subterms in closed linear $\lambda$-terms, the number of vertices of degree 1 in $(1,3)$-valent maps and of free variables in open linear $\lambda$-terms etc. To analyse asymptotically these distributions, we introduce appropriate tools: a moment-pumping schema for differential equations and a composition schema inspired by Bender's theorem.}

\tableofcontents

\section{Introduction}
Building upon an ever-increasing body of work on the combinatorics of maps, the $\lambda$-calculus, and their interactions, we present here a study of the asymptotic behaviour of some structural properties of large random objects drawn from restricted subclasses of maps and $\lambda$-terms. 

\subsection{Motivation and main results}\label{subsection:intro}
Maps, or graphs embedded on surfaces, are an important object of study in modern combinatorics and their presence in various areas, ranging from algebra to physics, forms bridges between seemingly disparate subjects.
In recent years it has become apparent that such bridges extend to logic as well, stemming from bijections between various natural classes of rooted maps and certain subsystems of $\lambda$-calculus.
This includes a natural bijection between rooted trivalent maps and linear lambda terms \cite{bodini2013asymptotics}, as well as a somewhat more involved bijection between rooted planar maps of arbitrary vertex degrees and $\beta$-normal ordered linear lambda terms \cite{ZG2015corr}, both of which have led to further study of the combinatorial interactions between lambda terms and maps.

To make the above correspondence concrete, let us briefly recall here the bijection of \cite{bodini2013asymptotics} following the analysis of \cite{zeilberger2016linear}, which is itself inspired by Tutte's classical approach to map enumeration via repeated root edge decomposition \cite{tutte1968}.
Informally, a rooted trivalent map may be defined as a graph equipped with an embedding into an oriented surface of arbitrary genus, all of whose vertices have degree 3, and one of whose edges has been distinguished and oriented (see below for a more formal definition).
For reasons that will be quickly apparent, it is pertinent to slightly extend the class of rooted trivalent maps by embedding it into the class of (1,3)-valent maps, that is, maps whose vertices all have degree 3 or 1.
We will view 1-valent vertices as labelled ``external'' vertices, and the root itself as a distinguished external vertex.
By considering what happens around the root, it is clear that such a map falls into one of three categories: 

\begin{figure}[H]
\centering
\includegraphics[scale=1.09]{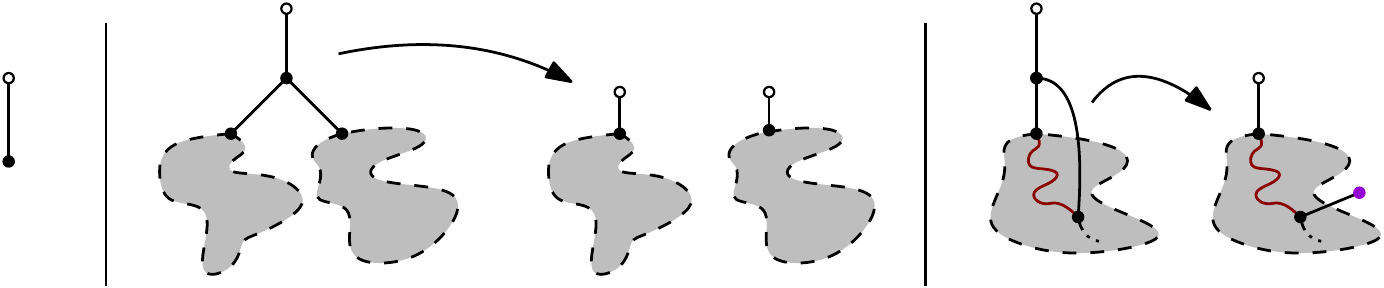}
\label{fig:tutte}
\end{figure}

\noindent namely, it is (from left to right) either the trivial one-edge map with no trivalent vertices and a single 1-valent vertex besides the root, a map in which the deletion of the root and its unique trivalent neighbour yields a pair of disconnected maps which may be canonically rooted, or finally a map in which the same operation yields a connected map which may be again rooted canonically and which in addition has a distinguished degree-1 vertex.

Quite remarkably, this decomposition \`a la Tutte exactly mirrors the standard inductive definition of linear lambda terms.
Informally, an arbitrary lambda term is either 
a \emph{variable} $x$, an \emph{application} $(t~u)$ of a term $t$ to another term $u$, or an \emph{abstraction} $\lambda x.t$ of a term $t$ in a variable $x$, with linearity imposing the condition that in an abstraction $\lambda x.t$, the variable $x$ has to occur exactly once in $t$.
All of the terminology will eventually be explained, but concretely, the differential equation resulting from this analysis
\begin{equation}\label{eq:diff_open_linlams}
    \lamOpenOGF(z,u) = zu + z\lamOpenOGF(z,u)^2 + z\frac{\partial}{\partial u} \lamOpenOGF(z,u) 
\end{equation}
can be seen as counting either (1,3)-valent maps or linear $\lambda$-terms, with the size variable $z$ tracking edges or subterms and the ``catalytic'' variable $u$ tracking non-root 1-valent vertices or free variables.
Setting $u = 0$ then allows us to recover the ordinary generating function $\lamOpenOGF(z,0)$ counting rooted trivalent maps in the classical sense
as well as closed linear lambda terms.

The bijection from $\lambda$-terms to maps is made even more evident by representing the terms as certain decorated syntactic diagrams, in the manner of \Cref{fig:synDiag}. Such diagrams yield rooted trivalent maps with external 1-valent vertices simply by forgetting the labels of trivalent nodes, while the above correspondence shows that this information can be uniquely reconstructed from a given map by a recursive decomposition.
A more comprehensive discussion of the correspondence between rooted trivalent maps and linear $\lambda$-terms is given in \cite{bodini2013asymptotics} and \cite{zeilberger2016linear}, and we will review it further below. 

\begin{figure}[h]
  \centering
  \includegraphics[scale=1.5]{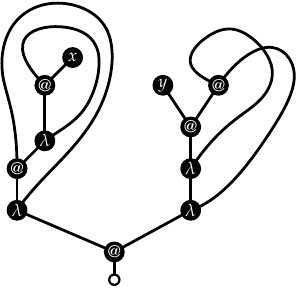}
  \caption{The syntactic diagram of the open linear $\lambda$-term $(\lambda a.(\lambda b. b x)) (\lambda c.\lambda d.y(c d))$.}
  \label{fig:synDiag}
\end{figure}

By exploiting such bijective correspondences between families of maps and $\lambda$-terms, we identify and study pairs of corresponding parameters natural to both classes, focusing on their limit distributions. The parameters studied in this work, together with their limit distributions, are listed in \Cref{table:pairs}.
\begin{table}[H]
\centering
\footnotesize
\begin{tabular}{@{}lll@{}}
\toprule
Parameter on maps  (number of) & Parameter on $\lambda$-terms  (number of) & Limit distribution \\ \midrule
Loops in rooted trivalent maps                & Identity-subterms in closed linear $\lambda$-terms   & $Poisson(1)$           \vspace{0.2cm} \\
Bridges in rooted trivalent maps              & Closed subterms in closed linear $\lambda$-terms     & $Poisson(1)$           \vspace{0.2cm} \\
Vertices of degree $1$ in rooted $(1,3)$-maps & Free variables in open linear $\lambda$-terms up to exchange        &  $\mathcal{N}\left( (2n)^{1/3}, (2n)^{1/3} \right)$ \vspace{0.2cm} \\ 
Vertices of degree $2$ in rooted $(2,3)$-maps & Unused abstractions in closed affine $\lambda$-terms & $\mathcal{N}\left( \frac{(2n)^{2/3}}{2},\frac{(2n)^{2/3}}{2} \right)$ \\ \bottomrule
\end{tabular}
\caption{Pairs of corresponding parameters in families of maps and $\lambda$-terms and their limit distributions, where $Poisson(\lambda)$ signifies the Poisson law of rate $\lambda$ and $\mathcal{N}(\mu, \sigma^2)$ is shorthand for the corresponding random variables $X_n$ converging in law to the standard normal distribution after being standardised as $\frac{X_n - \mu}{\sigma_n}$. }
\label{table:pairs}
\end{table}

The first step of our approach is obtaining combinatorial specifications which allow us to capture the behaviour of our parameters of interest. This is done via a number of new decompositions valid for restricted families of maps and $\lambda$-terms. We are then faced with the task of asymptotically analysing these specifications, a task made difficult by the fact that number of elements of a given size in these families exhibits rapid growth; this precludes a straightforward approach based on standard tools of analytic combinatorics as the corresponding generating functions are purely formal power series and do not represent functions analytic at 0. To facilitate our approach, we therefore develop two new schemas which serve to encompass the two general cases we have observed in our study: differential specifications giving rise to Poisson limit laws and composition-based specifications giving rise to Gaussian limit laws.

Our purpose in this present work is therefore twofold. On the one hand, we want to demonstrate how interesting insights on the typical structure of large random maps and $\lambda$-terms may be obtained by fruitfully making use of techniques drawn from the study of maps and $\lambda$-terms in tandem. On the other hand, we present two new tools which aid in the asymptotic analysis of parameters of fast-growing combinatorial classes; these tools are of independent interest, being applicable to the study of a wide class of combinatorial classes whose generating function is purely formal and obeys certain types of differential or functional equations.

\subsection{Related work}

The structure and enumeration, both exact and asymptotic, of maps by their genus has been the subject of much study; see, for example, \cite{cori1981planar,bender1986survey,banderier2001random} for the planar case and \cite{walsh1972counting, bessis1980quantum,marcus:inria-00100704} for the higher genus case. On the other hand, the investigation of enumerative and statistical properties of rooted maps, counted without regard to their genus, has received much less attention. One reason for this is the divergent nature of the generating functions involved in such studies: by results derived in \cite{arques2000rooted} one may show that the number of maps with $n$ edges is asymptotically $(2n-1)!!$. This poses a significant obstacle since, as Odlyzko notes in \cite{odlyzko1995asymptotic}: ``{\em There are few methods for dealing with asymptotics of formal power series, at least when compared to the wealth of techniques available for studying analytic generating functions}''. As such, the structure of large random such maps has only recently begun to be investigated, starting with the distribution of genus in bipartite random maps being derived \cite{carrance2019uniform}. More recently, the authors of \cite{bodini2018asymptotic} investigated the asymptotic distributions for the number of vertices, root isthmic parts, root edges, root degree, leaves, and loops in random maps. In particular, comparing their results to ours, we note that for general maps the authors derived a $Poisson(1)$ limit law for the number of leaves and a previously-unknown law for the number of loops. Both of these results stand in stark contrast to the case of leaves in $(1,3)$-valent maps, which we show is normally distributed when standardised using $\mu = \sigma^2 = n^{1/3}$, and to the case of loops in rooted trivalent maps which we show is $Poisson(1)$. In terms of techniques employed, the authors of \cite{bodini2018asymptotic} show that for most of the statistics considered in their work the corresponding bivariate generating functions are formal solutions to Riccati equations, which may be linearised to yield recurrences on the coefficients of said generating functions which are amenable to study. We note here that an instance of a Riccati-type differential equation appears in our work too, but this time it is a differential equation with respect to the variable coupled to the statistic we're interested in, unlike the instances of \cite{bodini2018asymptotic} where the derivative was taken with regards to the size-coupled variable.

As for the $\lambda$-calculus, while of central importance to logic and theoretical computer science, it is a relatively new subject of study for combinatorialists. The combinatorial study of closed linear and affine $\lambda$-terms and their relaxations was introduced in \cite{bodini2013asymptotics,bodini2013enumeration}. A comprehensive presentation of the combinatorics of open and closed linear $\lambda$-terms and their counterparts in maps is presented in \cite{zeilberger2016linear}. We note here that there exists a number of combinatorial studies of $\lambda$-terms which use a size notion different than ours and that of \cite{bodini2013asymptotics,bodini2013enumeration,zeilberger2016linear}. For example, there exists a number of works focusing on a unary de Bruijn notation based model as in \cite{bendkowski2017combinatorics}. This choice of size notion has the effect of altering the qualitative properties of our objects of study: in particular, the statistical results and the associated techniques of \cite{bendkowski2018statistical} are not applicable to our model.

\section{Definitions and basic tools}
\subsection{Graphs and maps}
\label{subsec:maps}

We begin by establishing some notation and definitions pertaining to graphs and maps. A comprehensive treatment of maps and various of their aspects is presented \cite{lando2013graphs}.

\paragraph*{Graphs.} %
We will consider finite undirected graphs, allowing for loops and multiple edges. Given a graph $G$, we will denote the set of its vertices by $V(G)$ and that of its edges by $E(G)$. For an edge $\{u,v\} \in E(G)$ will write $e = uv = vu$ to denote the edge $e$ between $u$ and $v$ and we will call $u,v$ the {\em endpoints} of $e$. We will also say that $e$ {\em is incident to} $u,v$. 

Given a vertex $v \in V(G)$ we call the set $N_{G}(v) = \{ u \lvert u \in V(G), uv \in E(G) \}$ its {\em neighbourhood}. The {\em degree} or {\em valency} of $v$ is $\lvert N_{G}(v) \lvert$.

A {\em subgraph} of a graph $G$ is a graph $H$ such that $V(H) \subseteq V(G)$ and $E(H) \subseteq E(G)$. The subgraph {\em induced by $S \subseteq V(G)$}, denoted by $G[S]$, is the subgraph of $G$ consisting of all vertices in $S$ and all edges in $E(G)$ that have both endpoints in $S$. 

For a vertex $u \in V(G)$, we denote by $G \setminus u$ the subgraph induced by $V(G)\setminus u$. For an edge $e \in E(G)$, we denote by $G \setminus e$ the graph $(V(G), E(G)\setminus e)$. We will refer to the last two operations as {\em vertex deletion} and {\em edge deletion} respectively. We define $G / v$, the graph obtained from $G$ by {\em dissolving} a degree 2 vertex $v \in V(G)$, to be the graph obtained by deleting $v$ and adding an edge to its two neighbours adjacent.

An edge $e \in E(G)$ is a {\em bridge} if $G \setminus e$ has one more connected component than $G$.

We'll denote by $G + H$ the disjoint union of two graphs $G, H$.

\paragraph*{Maps as embedded graphs.} A {\em map} is an embedding of a connected graph into a connected, closed, oriented surface such that all faces are homeomorphic to open disks. Maps are considered up to orientation preserving homeomorphisms of the underlying surface. 

In \Cref{subsec:freeLams,subsec:freeVars} we'll also make use of the notion of a {\em disconnected map}. Such maps will be considered as embeddings of disconnected graphs not on a single surface but on the disjoint union of such surfaces, in a way such that each connected component of the graph is drawn on a different surface. When the need arises to consider both connected and disconnected maps as a single class, we will refer to them as {\em not-necessarily-connected maps}.
In this work we focus on embeddings of degree-constrained graphs. We shall refer to maps whose underlying graph's vertices are all of degree three as {\em trivalent}. More generally, if the set of allowed degrees is ${k_1, \dots, k_n} \in \mathbb{N}_{\geq 0}^{n}$ we'll talk about a $(k_1, \dots, k_n)$-valent map or just {\em $(k_1, \dots, k_n)$-map}.

Finally, we will often make use of graph-theoretic notions when referring to a map, which are to be interpreted as identifying properties of its underlying graph. For example, a {\em bridge} of a map is a bridge of its underlying graph, i.e., an edge whose deletion results in a disconnected graph. Similarly, a {\em path} in a map is a path in its underlying graph. A {\em spanning tree} of a map is a spanning tree of its underlying graph. 

A {\em submap} $m'$ of a map $m$ is an embedding of the subgraph corresponding to $m'$ in $m$. If $g$ is the graph of a map $m$ and $v \subseteq V(G)$, $m[v]$ will denote the embedding of the induced subgraph $g[v]$. For both of the above cases, the embedding chosen for a subgraph $m'$ is the restriction of the one of $m$, i.e., the embedding of $m'$ for which the neighbours of any vertex in $m'$ are oriented in exactly the same way as in $m$.

\paragraph*{Maps and permutations.}
It is well-known that embeddings of graphs may be represented, up to isomorphism, by certain systems of permutations \cite{lando2013graphs}.
In particular, maps on connected closed oriented surfaces have the following equivalent purely algebraic definition: a finite set $H$ of {\em half-edges} together with a pair $(v, e)$ of permutations on $H$ such that $e$ is a fixed-point-free involution and the group $\langle v, e \rangle$ acts transitively on $H$.
Such objects are sometimes referred to as {\em combinatorial maps}. 
More generally, if one drops the requirement that $\langle v, e\rangle$ acts transitively one obtains not-necessarily-connected maps.

Various properties of a map $M$ may be read off from the tuple $(v,e)$. For example, vertices of $M$ correspond to cycles of $v$, their degree being the length of said cycle. A cycle $(h_1, h_2)$ of $e$ similarly is to be interpreted as encoding an edge $e$ formed by gluing two half-edges $h_1, h_2$.
In particular, observe that any trivalent combinatorial map corresponds to a pair of a cubic permutation and an involution, thus yielding a representation of the modular group $PSL(2,\mathbb{Z}) \cong \langle x,y \mid x^3 = y^2 = 1\rangle$.
Finally, the faces of the map may be read off as the cycles of the permutation $f = e^{-1}v^{-1}$ (thus satisfying the identity $vef = 1$).

\begin{figure}[h]
  \centering
  \includegraphics[scale=1.5]{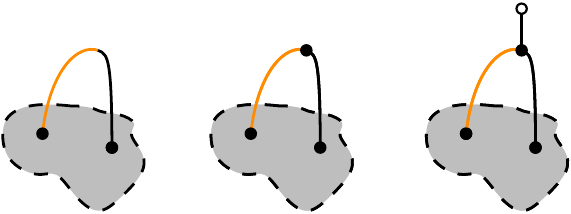}
  \caption{The bijection between half-edge rooted (1,3)-valent maps (left) and vertex-rooted open rooted trivalent maps (right) obtained by first subdividing the root-containing edge of the former (middle step) before finally adding a new vertex.}
  \label{fig:rootings}
\end{figure}

\paragraph*{Open rooted trivalent maps with external vertices.}\label{par:rootedMaps}

Under the permutation-based definition, a popular way of rooting a map is by simply choosing an arbitrary half-edge, in which case the unique $v\text{-}$, $e\text{-}$, and $f$-cycles it forms a part of are marked as the \emph{root vertex}, \emph{root edge}, and \emph{root face}, respectively.
We shall adopt a slightly different convention for rooting trivalent maps, which is partly motivated by their correspondance with $\lambda$-terms, but may also be motivated by considering rooted maps as embeddings of graphs on surfaces with boundary (cf.~Tutte's original definition of rooted planar triangulations as dissections of closed regions of the plane \cite{Tutte1962planartriangulations}).
Indeed, consider an embedding of a trivalent graph onto a compact oriented surface with a unique boundary component.
The condition that the faces defined by the complement of the graph are all homeomorphic to open disks implies that if we \emph{remove} the boundary of the surface, what is left is an embedding of a trivalent graph with some open edges, in the sense that they run into the boundary without including a vertex at the end.
In turn, such open ends of edges may be closed by the addition of 1-valent vertices, which should then be interpreted as being ``external'' to the map, and moreover should carry extra labelling information specifying their order of attachment to the boundary.
See \Cref{fig:boundary} for an illustration.

This leads to our definition of \emph{open rooted trivalent maps} as combinatorial maps equipped with the following data and properties:
\begin{itemize}
\item a distinguished 1-valent vertex $r \in V$, called the \emph{root};
\item an ordered list of 1-valent vertices $\Gamma = x_1,\dots,x_k \in V$ that are all mutually distinct and distinct from $r$;
\item such that the complement of $\Gamma \cup r$ in $V$ consists of 3-valent vertices.
\end{itemize}
The number $k \in \mathbb{N}$ of non-root external vertices is called the \emph{arity} of the open map, and in particular it is said to be \emph{closed} if it has arity 0.
In general, we refer to the $k+1$ 1-valent vertices as \emph{external,} and the remaining 3-valent vertices as \emph{internal}.
As a visual aid, we'll draw internal vertices as solid black vertices while for external vertices we'll use white vertices with a colored border.
Specifically for the root, we'll always represent it by a white vertex with black border.
Finally, let us note that the unrooted versions of such uni-trivalent diagrams have been studied under different names in many contexts, particularly in knot theory \cite{barnatan1995vassiliev} and physics \cite{Penrose1971}.

From this definition, it is clear that there is a trivial bijection between closed rooted trivalent maps and standard half-edge rooted trivalent maps, as shown in \Cref{fig:rootings}.\footnote{\label{foot:empty}Note that in the classical definition, the map with no half-edges is often treated as a special case and rooted ``by default''.  In this case that convention ensures that the loop map $\loopMap$ is in the image of the bijection...and shows one small advantage of using open rooted trivalent maps, that we can avoid making such special exceptions!}
Following \cite{bernardi2007bijective}, a rooted map may be called {\em $k$-near-trivalent} if its root vertex has degree $k$ and all other vertices have degree $3$, so a closed rooted trivalent map may also be called a 1-near-trivalent map.
As far as enumeration is concerned, going from a half-edge-rooted trivalent map to the corresponding open rooted trivalent map increases the number of edges by 2 (equivalently, the number of half-edges by 4).
At general arity $k$, we will be interested in enumerating open rooted trivalent maps modulo the $k!$ relabellings of the non-root external vertices, which of course is the same thing as enumerating unlabelled 1-valent-vertex-rooted (1,3)-valent maps, or equivalently, by the same bijection of \Cref{fig:rootings}, half-edge-rooted (1,3)-maps up to a size shift.

Certain graph-theoretic notions must be appropriately adapted to account for the internal vs.~external distinction.
In particular, we'll say that bridge is an \emph{internal bridge} if both of its ends are internal vertices.
Indeed, as suggested above, one can think of external vertices as being implicitly connected via a path along what was formerly the boundary of the map, so that a bridge involving external vertices is not ``morally'' a bridge.

\begin{figure}[H]
\centering
\includegraphics[scale=1.09]{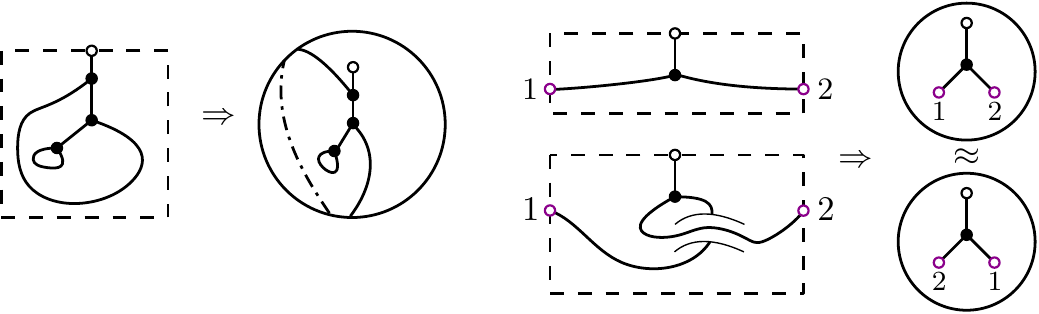}
\caption{On the left, a trivalent graph embedded on a square together with the corresponding vertex-rooted 1-near-trivalent map obtained by deleting the boundary and closing open edges. On the right, two trivalent graphs embedded in a rectangle and in a rectangle with a handle attached respectively, together with the corresponding open trivalent maps of arity 2, which are isomorphic up to relabelling of non-root external vertices.}
\label{fig:boundary}
\end{figure}

\subsection{$\lambda$-Calculus}
\label{subsec:lambda}

\paragraph*{Linear and affine $\lambda$-terms.} The {\em $\lambda$-calculus} is, among other things, a computationally universal programming language. Its terms are formed using the following grammar:

\begin{itemize}
    \item A variable (taken from an infinite set $\{x, y, z, \dots \}$) is a valid term.
    \item If $x$ a variable and $t$ is a valid term, then so is $(\lambda x.t)$. Such a term is called an {\em abstraction}, the variable $x$ in $(\lambda x.t)$ is considered {\em bound}, and we will refer to $t$  as the {\em body} of the abstraction.
    \item If $t$ and $u$ are valid terms, then so is $(t ~ u)$. Such a term is called an {\em application}.
\end{itemize}

When it aids in readability, we shall do away with some of the parentheses when writing out a $\lambda$-term, following standard conventions.
In particular, we omit outermost parentheses and associate applications to the left, while $\lambda$-abstraction is always assumed to take scope to the right by default, e.g., $\lambda x.\lambda y.\lambda z.x(yzw)$ means the same thing as $\lambda x.(\lambda y.(\lambda z.(x ((y z) w))))$.

While the theory of the $\lambda$-calculus is a vast and important subject, in this present work we will only deal with $\lambda$-terms as syntactic/combinatorial objects. 
As we shall see, even in this reduced capacity, considerations of $\lambda$-theoretic notions are quite fruitful and find natural counterparts in the realm of maps. 

Let us now introduce some technical vocabulary.
A term is called {\em closed} if all variables occurring in it are bound by some abstraction.
Otherwise such a term is called {\em open} and the variables not bound by an abstraction are referred to as {\em free}.
Two $\lambda$-terms are equivalent if, intuitively, they differ only in the names of variables.
The precise notion of equivalence, $\alpha$-equivalence necessitates the employment of {\em capture-avoiding substitutions} which we will not delve into.

An occurrence of a free or bound variable is called a \emph{use} of the variable.
A term is said to be {\em linear} if every (free or bound) variable is used exactly once.
For example the terms $\lambda x.x$ and $\lambda x. \lambda y. \lambda z. (x z)y$ are both linear while the terms $\lambda x.xx$ and $\lambda x. \lambda y. x$ are not.
The latter is an example of an \emph{affine} term, that is, a term in which every variable is used at most once.
We will refer to abstractions whose bound variable is never used as {\em unused abstractions}.

To make the above notions more precise, we can consider $\lambda$-terms as indexed explicitly by lists of free variables, defining the relation $\Gamma \vdash t$ between an ordered list of free variables $\Gamma = (x_1, \dots, x_k)$ and a linear $\lambda$-term $t$ by the following inductive rules:
\begin{center}
    \AxiomC{$\vphantom{\Gamma}$}
    \UnaryInfC{$x \vdash x $}
    \DisplayProof
    \hskip 1.5em
    \AxiomC{$\Gamma \vdash t \quad \Delta \vdash u$}
    \UnaryInfC{$\Gamma, \Delta \vdash (t~u) $}
    \DisplayProof
    \hskip 1.5em
    \AxiomC{$\Gamma, x \vdash t$}
    \UnaryInfC{$\Gamma \vdash \lambda x.t$}
    \DisplayProof
    \hskip 1.5em
    \AxiomC{$\Gamma, x, y, \Delta \vdash t$}
    \UnaryInfC{$\Gamma, y, x, \Delta \vdash t$}
    \DisplayProof
\end{center}
where we write $(\Gamma, \Delta)$ for the concatenation of two lists $\Gamma$ and $\Delta$.
From left to right, the first three rules express formation of variables, applications, and abstraction terms, respectively, while the fourth rule (called the \emph{exchange} rule) reflects the property that variables may be used in an arbitrary order in a linear term.
To define affine terms, we add one more rule (called \emph{weakening}):
\begin{center}
    \AxiomC{$\Gamma \vdash t$}
    \UnaryInfC{$\Gamma,x \vdash t$}
    \DisplayProof
\end{center}
which, reading from bottom to top, reflects the property that variables may be unused in an affine term.

\paragraph*{Subterms and one-hole contexts.}
The \emph{subterms} of a term $t$ are defined as follows:
\begin{itemize}
\item $t$ is a subterm of itself;
\item if $u$ is a subterm of $t_1$ or $t_2$ then $u$ is a subterm of $(t_1~t_2)$;
\item if $u$ is a subterm of $t$ then $u$ is a subterm of $\lambda x.t$.
\end{itemize}
The \emph{proper} subterms of $t$ are all of its subterms except for $t$ itself.
We write $u \subtm t$ to indicate that $u$ is a subterm of $t$, and $u \psubtm t$ that it is a proper subterm.

For many purposes, including ones of enumeration, it is important to distinguish between different occurrences of the same subterm (e.g., up to $\alpha$-equivalence, the \emph{identity term} $\lambda x.x$ occurs twice as a subterm of $x(\lambda y.y)(\lambda z.z)$).
A convenient way of doing so is through the notion of \emph{one-hole context} \cite{mcbride2001}.
In our setting, one-hole contexts may be defined inductively as follows:
\begin{itemize}
\item the \emph{identity context,} written $\hole$, is a one-hole context;
\item if $c$ is a one-hole context and $t$ is a term, then so are $(c ~ t)$ and $(t ~ c)$;
\item if $x$ is a variable and $c$ is a one-hole context then so is $\lambda x.c$.
\end{itemize}
The result of ``plugging'' the hole of a one-hole context $c$ with a term $u$ is a term $\plug{c}{u}$ defined inductively by:
\begin{align*}
   \plug{\hole}{u} &= u
\\ \plug{(c~t)}{u} &= (\plug{c}{u}~t)
\\ \plug{(t~c)}{u} &= (t~\plug{c}{u})
\\ \plug{(\lambda x.c)}{u} &= \lambda x.(\plug{c}u)
\end{align*}
It is easy to check that $u \subtm t$ (respectively $u \psubtm t$) iff there exists a one-hole context $c$ (resp.~$c \ne \hole$) such that $t = \plug{c}{u}$.
Moreover, by distinguishing different contexts $c_1$, $c_2$ one can distinguish between different occurrences of the same subterm $u$ within a term $t = \plug{c_1}{u} = \plug{c_2}{u}$.
Finally, there is an evident notion of composition of contexts, written $c_1 \circ c_2$, satisfying $\plug{(c_1\circ c_2)}{u} = \plug{c_1}{\plug{c_2}{u}}$ for all $u$.
Given two one-hole contexts $c_1, c_2$ $c_1$, we say $c_1$ is a {\em right subcontext} of $c_2$ if $\exists c_3 . c_2 = c_3 \circ c_1$.

Now we can define the size $\lvert t \rvert$ of a $\lambda$-term to be the number of its subterms $u \preceq t$ where we implicitly distinguish between different occurrences of the same subterm, or more formally as the number of distinct factorizations $t = \plug{c}{u}$ into a subterm and surrounding one-hole context.
Note this is equivalent to the following inductive definition:
\begin{align*}
  \lvert x \rvert &= 1 \\
  \lvert (t~u) \rvert &= 1 + \lvert t \rvert + \lvert u \rvert \\
  \lvert \lambda x.t \rvert &= 1 + \lvert t \rvert
\end{align*}
For example, $\lambda x.x$ has size two and $\lambda x.x(\lambda y.y)(\lambda z.z)$ has size 8 under this metric.
We define the size $\lvert c \rvert$ of a one-hole context similarly but assigning the identity context size zero:
\begin{align*}
  \lvert \hole  \rvert &= 0 \\
  \lvert (c~t) \rvert &= 1 + \lvert c \rvert + \lvert t \rvert \\
  \lvert (t~c) \rvert &= 1 + \lvert t \rvert + \lvert c \rvert \\
  \lvert \lambda x.c \rvert &= 1 + \lvert c \rvert
\end{align*}
so that we have the identity $\lvert \plug{c}{u} \rvert = \lvert c \rvert + \lvert u \rvert$ for all $c$ and $u$.

Finally, observe that for any term with at least one free variable $\Gamma,x \vdash t$ there is a unique one-hole context $c$ such that $\plug{c}{u} = t[u/x]$.
In this case, we say that $c$ is \emph{simple} and write $\Gamma \vdash c$.
By extension, we say that $c$ is closed if $\Gamma = \cdot$.

\paragraph*{Lambda terms as invariants of rooted maps.}\label{par:lambdaMapBij}

As recalled in \Cref{subsection:intro}, there is a natural bijection $\tau$ from rooted trivalent maps to linear lambda terms, which may be understood either via repeated root edge decomposition \`a la Tutte (as advocated in \cite{zeilberger2016linear}), or alternatively (as in the original construction \cite{bodini2013asymptotics}) as building a canonical depth-first search spanning tree of a map.
In either case, we adopt the viewpoint that the term $t = \tau(m)$ may be seen as an ``invariant'' of the map $m$, in other words that it extracts some important topological information.
In particular, $t$ describes a canonical spanning tree on $m$ obtained by deleting in the map the edges corresponding to the bound variables of the term.
We call this the \emph{$t$-tree} of $m$.
Moreover, following \cite{bernardi2007bijective}, we will call the unique path in the $t$-tree between two vertices of $m$ a {\em $t$-path}, and fixing some vertex $x$, we define the {\em parent} of $x$ to be its neighbour along the $t$-path between the root vertex and itself.
An example of two maps and their corresponding terms, with canonical spanning trees highlighted, is presented in \Cref{fig:prisms}.
\begin{figure}[h]
  \centering
  \includegraphics[scale=1.3]{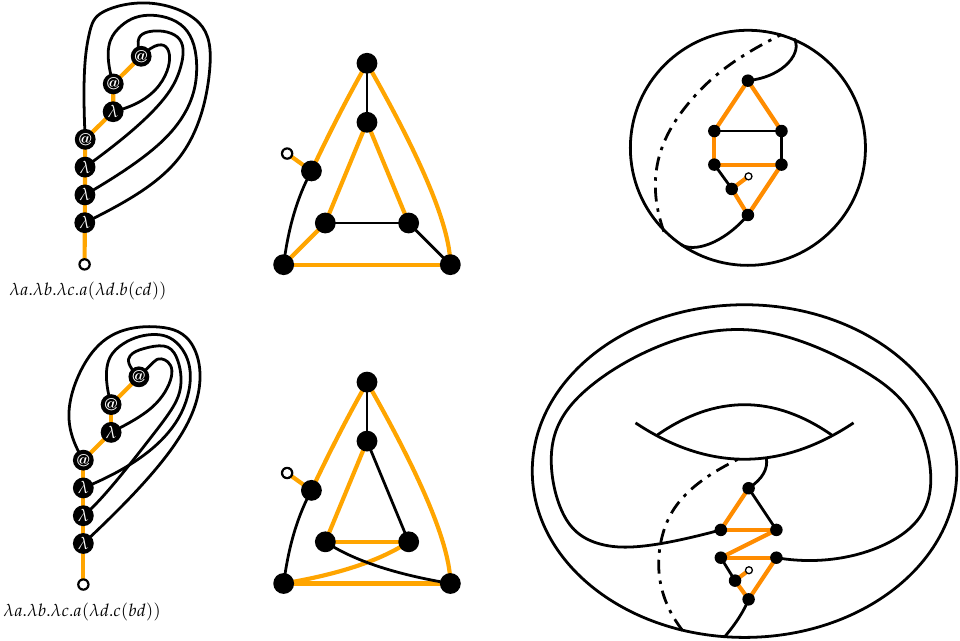}
  \caption{Two closed linear $\lambda$-terms and their equivalent rooted trivalent maps drawn as graphs with crossings and as embeddings. The spanning trees induced by the terms on each map are highlighted in orange.}
  \label{fig:prisms}
\end{figure}

The bijection $\tau$ allows us to establish a dictionary of correspondences between structural properties of linear lambda terms and rooted trivalent maps.
For example, it is not hard to see that loops in maps correspond to \emph{identity-subterms} of lambda terms, that is, subterms $\alpha$-equivalent to $\lambda x.x$, and that dually, (internal) bridges correspond to \emph{closed proper subterms} \cite{zeilberger2016linear}.
In fact, more generally any decomposition of a linear term $t = \plug{c}{u}$ into a subterm $u$ and surrounding one-hole context $c$ may be interpreted as a $(k+1)$-cut of the corresponding map, where $k$ is the number of free variables of $u$ (see \Cref{fig:cxtcut} for an illustration).
We represent the one-hole context itself as a map with a distinguished vertex, which we draw as a box, marking the hole.
In particular, a closed simple one-hole context $\cdot \vdash c$ may be considered as a (1,3)-valent map with \emph{two} marked 1-valent vertices, one representing the hole in addition to the one representing the root.
We write $\tau^{-1}(c)$ for this map, by extension of the original bijection.

Given these correspondences, we note that for many of the results in this work, the proofs may be given either purely in the language of lambda calculus or in the language of maps, and then automatically transported to the other side along a bijection.
Nevertheless, we will oftentimes include in our proofs both complementary arguments, even if not strictly required, to illuminate how the arguments translate from one class to the other.

\begin{figure}[h]
  \centering
  \includegraphics[scale=1]{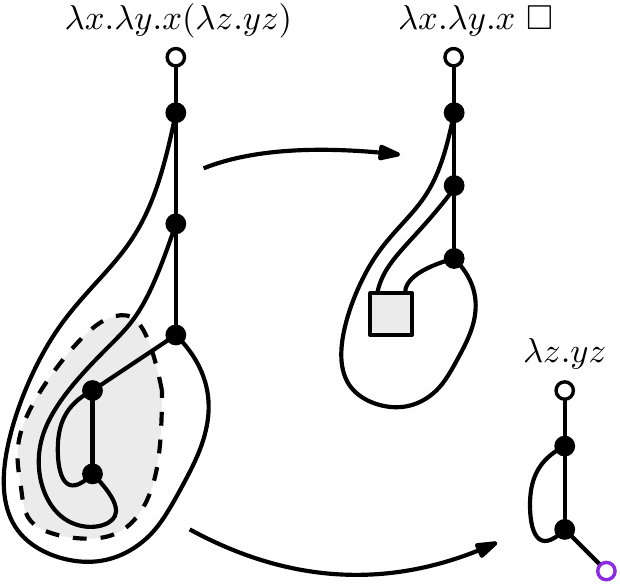}
  \caption{A decomposition of $t = \lambda x.\lambda y.x(\lambda z.y(z))$, as $c[u]$ for $c = \lambda x.\lambda y.x~\hole$ and $u = \lambda z.y z$, corresponding to a $2$-cut in the map $\tau^{-1}(t)$.}
  \label{fig:cxtcut}
\end{figure}

\subsection{Analytic combinatorics} 

\paragraph*{Combinatorial structures and the symbolic method.}
A {\em combinatorial class} is an at most countable set $\mathcal{A}$ equipped with a {\em size function} $\lvert\cdot\rvert : \mathcal{A} \to \mathbb{N}$, such that the set of elements $\mathcal{A}_n = \{ a \in \mathcal{A} \mid |a| = n\}$ of any given size $n$ has a finite cardinality $a_n = |\mathcal{A}_n|$.
To a combinatorial class $\mathcal{A}$ one can assign a power series $A(z)$, either a so-called {\em ordinary generating function} or an {\em exponential} one, defined as $\sum_n a_n \frac{z^n}{\omega_n}$ where the weight $\omega_n$ is given by $\omega_n = 1$ in the ordinary case and by $\omega_n = n!$ in the exponential case.
We'll make use of the coefficient extraction operator $[z^n] A(z)$ to denote the coefficients of $z^n$ in $A(z)$.
We refer the reader to \cite{flajolet2009analytic} for a description of the algebra of combinatorial classes and the corresponding algebra of generating functions. In particular, we'll make use of the operations of composition, disjoint union, cartesian product, and pointing which correspond to the composition, Cauchy product, and application of the operator $z \partial_z$ for powerseries respectively. We'll also make use of the {\em exponential Hadamard product} for exponential generating functions, defined as $A(z) \odot B(z) = \sum_n \frac{a_n b_n}{n!} z^n$, where $a_n = n! [z^n] A(z), b_n = n! [z^n] B(z)$. %

For a combinatorial class $\mathcal{A}$, we'll define a {\em combinatorial parameter}, or just parameter, to be a function $\chi: \mathcal{A} \rightarrow \mathbb{N}$.
Again, these can be of ordinary or of exponential type.
Then, if $a_{n,k} = \{ a \in \mathcal{A}_n \mid \chi(a) = k \}$ is the number of objects of size $n$ with parameter value $k$, the {\em bivariate generating function} $A(z,u)$ of $\mathcal{A}$ with respect to $\chi$ is $\sum_{n, k} \frac{a_{n,k}}{\omega_n \rho_k} z^n u^k$, where the weight $\rho_k$ is given by $\rho_k = 1$ if $\chi$ is of ordinary type and by $\rho_k = k!$ if it is of exponential type.
We'll say that the variable {\em $u$ marks $\chi$}.
For any $n \in \mathbb{N}$, the parameter $\chi$ determines a discrete random variable $X_n$ over $\mathcal{A}_n$: $\mathbb{P} (X_n = k) = \frac{a_{n,k}/\rho_k}{\sum_k a_{n,k}/\rho_k}$.
In such a case we'll say that $X_n$ corresponds to $\chi$ {\em taken over} $\mathcal{A}_n$.

Finally, we can also form new combinatorial classes $\mathcal{A}|_{\chi = k}$ by restricting $\mathcal{A}$ to a particular value for the parameter $\chi$, and keeping the same notion of size.
An important recurring case is when $\chi$ corresponds to a natural ``arity grading'' for $\mathcal{A}$ distinct from its size grading, and we introduce a special notation for this, writing $\mathcal{A}_{[k]}$ for the set of elements of arity $k$.
The use of iterated subscripts following these conventions should be clear from context.
For example, we write $\openTerms$ to denote the combinatorial class of all linear $\lambda$-terms, $\closedTerms$ for its restriction to the combinatorial class of closed terms (i.e., terms of arity 0), and $\closedTerms[n]$ for the finite set of closed linear terms with $n$ subterms.

\begin{table}[]
\centering
\begin{tabular}{lll}
\hline
Combinatorial Class           & Symbol & Size Notion      \\ \hline \addlinespace

Open rooted trivalent maps and linear $\lambda$-terms & $\openTerms$ & Num. of edges in map / subterms in term \\
Closed rooted trivalent maps and linear $\lambda$-terms & $\closedTerms$ & \ditto \\
Affine linear $\lambda$-terms & $\affineTerms $ & Num. of subterms    \\

Unrooted (1,3)-valent maps             & $\oneThreeMaps$ & Num. of edges    \\
Unrooted (2,3)-valent maps             & $\twoThreeMaps$ & \ditto   \\

\end{tabular}
\caption{The main combinatorial classes considered in this work.}
\label{table:classes}
\end{table}

A list of some of the main combinatorial classes to be considered in this work is given in \Cref{table:classes}.

\paragraph*{Divergent generating functions.} When enumerating various classes of non-planar maps and $\lambda$-terms, one quickly realises that the numbers involved grow rapidly (see, for example, \oeis{A062980}).

As such, the corresponding generating functions are everywhere divergent and, in particular, do not represent some function analytic at $0$. As such they are to be interpreted as purely formal power series.

Such generating functions are not always amenable to straightfoward analysis using standard tools of analytic combinatorics but instead require their own technical tools. One of our aims in this work is to develop such tools for analysing structural properties of combinatorial classes whose objects grow so rapidly so as to render their generating functions divergent.

We begin with some lemmas useful to the asymptotic and probabilistic study of such classes. The first such lemma shows that rooting a combinatorial structure does not affect the distribution of parameters over it.

\begin{lemma}\label{lemma:rootingDoesntAffectDist}
Let $\mathcal{F}$ be a combinatorial class and $\chi$ some parameter defined on it. Then the limit distribution of $\chi$ taken over $\mathcal{F}^{\bullet}_{n}$ is the same as that of $\chi$ taken over $\mathcal{F}_n$.
\end{lemma}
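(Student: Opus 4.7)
The plan is to prove the stronger statement that the law of $\chi$ over $\mathcal{F}^{\bullet}_n$ coincides \emph{exactly} with its law over $\mathcal{F}_n$ for every $n \ge 1$; the claim about limit distributions then follows trivially.

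First I would recall that pointing multiplies the count of objects at each size $n$ by exactly the factor $n$. Analytically this is the effect of $z\partial_z$ on a generating function, since $[z^n](z\partial_z F(z)) = n\,[z^n] F(z)$ for both OGFs and EGFs; combinatorially, an element of $\mathcal{F}^{\bullet}_n$ is nothing but a pair $(f, a)$ consisting of some $f \in \mathcal{F}_n$ together with a choice of one of its $n$ distinguished atoms. In particular $|\mathcal{F}^{\bullet}_n| = n |\mathcal{F}_n|$.

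Next I would extend $\chi$ to $\mathcal{F}^{\bullet}$ in the only natural way, by setting $\chi(f, a) := \chi(f)$, so that the parameter is insensitive to the choice of distinguished atom. Since each $f \in \mathcal{F}_n$ with $\chi(f) = k$ produces exactly $n$ pointed versions, all still carrying parameter value $k$, the refined counts satisfy $f^{\bullet}_{n,k} = n \cdot f_{n,k}$. The factor $n$ then cancels cleanly from the probability mass function of $X^{\bullet}_n$:
$$\mathbb{P}(X^{\bullet}_n = k) \;=\; \frac{f^{\bullet}_{n,k}/\rho_k}{\sum_j f^{\bullet}_{n,j}/\rho_j} \;=\; \frac{n\,f_{n,k}/\rho_k}{n\sum_j f_{n,j}/\rho_j} \;=\; \mathbb{P}(X_n = k),$$
so $X^{\bullet}_n$ and $X_n$ are equal in distribution for every $n \ge 1$, and the limit claim follows a fortiori. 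No real obstacle arises; the only point worth flagging is that the natural lift of $\chi$ to $\mathcal{F}^{\bullet}$ is invariant under the distinguished atom, which is exactly what ensures that the pointing factor $n$ enters the numerator and denominator uniformly across all $k$ and cancels.
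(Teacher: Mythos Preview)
Your proof is correct and takes essentially the same approach as the paper: both observe that pointing multiplies the bivariate coefficients by a uniform factor of $n$, which cancels when forming the probability generating function (or, equivalently, the probability mass function). The paper simply writes this cancellation as $p_n(u) = \frac{[z^n]\mathcal{F}^\bullet(z,u)}{[z^n]\mathcal{F}^\bullet(z,1)} = \frac{n[z^n]\mathcal{F}(z,u)}{n[z^n]\mathcal{F}(z,1)}$, whereas you spell out the combinatorial justification more explicitly.
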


\begin{proof}
We have the following probability generating function for $\chi$ taken over $\mathcal{F}^{\bullet}_{n}$
\begin{equation}
  p_{n}(u) = \frac{[z^n] \mathcal{F}^{\bullet}(z,u)}{[z^n] \mathcal{F}^{\bullet}(z,1)} = \frac{n [z^n] \mathcal{F}(z,u)}{n [z^n] \mathcal{F}(z,1)} = \frac{[z^n] \mathcal{F}(z,u)}{[z^n] \mathcal{F}(z,1)}.
\end{equation}
\end{proof}

\begin{remark}
The above lemma can be iterated to show that applications of any operator of the form $z^n \partial_z^n$ result in limit distributions which converge in law to the limit distribution of $\chi$ taken over $\mathcal{F}_n$.
\end{remark}

The following lemma and its corrolary make rigorous the intuitive notion that for combinatorial classes of which the number of elements of size $n$ grows rapidly, the asymptotic number of tuplets of objects drawn from them is largely determined by the number of such tuplets for which all but one element are of the smallest possible size.

\reversemarginpar
\begin{lemma}\label{lemma:extremalCauchy}
Let $n_0 \in \mathbb{N}$ and $f(z) = \sum\limits_{n \geq n_0} f_n z^n, g(z) = \sum\limits_{n \geq n_0} g_n z^n$ be power series with $f_n, g_n > 0$ and $f_{n}/f_{n+1} = O(n^{-\sigma}), ~ g_{n}/g_{n+1} = O(n^{-\sigma})$ for some $\sigma \geq 1$ and $n \rightarrow \infty$. Then as $n \rightarrow \infty$, $[z^n] f(z)g(z) = (f_{n_0}g_{n-n_0} + f_{n-n_0}g_{n_0}) \left( 1 + O(n^{-\sigma}) \right)$.
\end{lemma}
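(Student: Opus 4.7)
The plan is to expand the Cauchy product as
\[
[z^n]f(z)g(z) = f_{n_0}g_{n-n_0} + f_{n-n_0}g_{n_0} + S, \qquad S := \sum_{k=n_0+1}^{n-n_0-1} f_k g_{n-k},
\]
and to show $S = O(n^{-\sigma})(f_{n_0}g_{n-n_0} + f_{n-n_0}g_{n_0})$. Writing $T_k := f_k g_{n-k}$ throughout, the point is that the two ``corner'' terms $T_{n_0}$ and $T_{n-n_0}$ should dominate every other term in the convolution.

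As a first check, I would verify that the two next-to-corner terms are already of the claimed order. Directly from the hypothesis applied once,
\[
\frac{T_{n_0+1}}{T_{n_0}} = \frac{f_{n_0+1}}{f_{n_0}} \cdot \frac{g_{n-n_0-1}}{g_{n-n_0}} = O(1) \cdot O(n^{-\sigma}) = O(n^{-\sigma}),
\]
where the first factor is constant in $n$ (depending only on $f$ and $n_0$) and the second comes from the $g$-hypothesis with $m = n-n_0-1\to\infty$. The mirror estimate $T_{n-n_0-1}/T_{n-n_0} = O(n^{-\sigma})$ follows by swapping the roles of $f$ and $g$.

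For the remaining interior terms, I would split $S$ at the midpoint $\lfloor n/2\rfloor$ and treat each half by iterated application of one of the two ratio bounds. On the left half $n_0 + 1 \le k \le \lfloor n/2\rfloor$, since $n-k \ge n/2$ we obtain the telescoping bound
\[
\frac{g_{n-k}}{g_{n-n_0}} = \prod_{i=n-k}^{n-n_0-1} \frac{g_i}{g_{i+1}} \;\le\; (C' n^{-\sigma})^{k-n_0},
\]
i.e.\ a geometric-type decay in $k-n_0$. Combined with an iteration of the $f$-hypothesis to control $f_{k}/f_{n_0}$, this yields $T_k/T_{n_0} = O\bigl(((k-n_0)!)^\sigma \cdot n^{-\sigma(k-n_0)}\bigr)$, a Borel-type series whose sum is $T_{n_0}\cdot O(n^{-\sigma})$. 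The right half $k \ge \lfloor n/2\rfloor + 1$ is handled symmetrically using the $f$-hypothesis, contributing $O(n^{-\sigma})\, T_{n-n_0}$. Adding these two bounds gives the required estimate on $S$.

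The main obstacle is that the ratio hypothesis is one-sided, so the factor $f_k$ appearing in the left-half estimate is not \emph{a priori} bounded above in terms of the extremal $f$-values. I would circumvent this by always pairing the ``decaying'' factor against the nearer extremal: for $k$ in the left half the $g$-decay is overwhelming (the factor $(n^{-\sigma})^{k-n_0}$ dominates any polynomial-type contribution from $f_k$), while for $k$ in the right half the $f$-decay plays the analogous role. In this way only one hypothesis needs to be used on each half, and the bound on $f_k$ can be obtained crudely from monotonicity $f_k \le f_{n-n_0}$ without spoiling the geometric convergence, since the surplus factor is reabsorbed via $f_{n-n_0}g_{n-n_0} \le T_{n-n_0}\cdot(g_{n-n_0}/g_{n_0})$ and the $g$-factor reappearing in the opposite half.
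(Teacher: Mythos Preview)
Your overall plan—isolate the two corner terms and show the remainder $S$ is $O(n^{-\sigma})$ times their sum—is exactly the paper's, and your verification of the next-to-corner estimate is fine. The gap is in the treatment of the deep interior.

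Your bound $T_k/T_{n_0}=O\bigl(((k-n_0)!)^\sigma\,n^{-\sigma(k-n_0)}\bigr)$ would need an \emph{upper} bound $f_k/f_{n_0}=O(((k-n_0)!)^\sigma)$, but the hypothesis $f_j/f_{j+1}=O(j^{-\sigma})$ only gives a \emph{lower} bound $f_{j+1}/f_j=\Omega(j^\sigma)$ on the forward ratios; it says nothing about how fast $f$ may grow. You recognise this, but the proposed workaround does not rescue the argument. Replacing $f_k$ by $f_{n-n_0}$ and comparing against $T_{n-n_0}=f_{n-n_0}g_{n_0}$ gives $T_k/T_{n-n_0}\le g_{n-k}/g_{n_0}$; for $k$ in the left half one has $n-k\ge n/2$, so this ratio is of factorial order in $n$, not small. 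The ``surplus factor'' $g_{n-n_0}/g_{n_0}$ you propose to reabsorb is itself the full growth of $g$ over the whole range $[n_0,n-n_0]$, and nothing on the opposite half cancels it. In short, the geometric decay from the $g$-telescope is of order $n^{-\sigma(k-n_0)}$, while the uncontrolled growth of $f_k$ can be of order $((k-n_0)!)^{\sigma'}$ for any $\sigma'$, so the product need not be summable to $O(n^{-\sigma})$.

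The paper's argument is coarser and sidesteps this issue: rather than telescoping each half against the \emph{nearest} corner, it asserts that on the interior range $[n_0+2,\,n-n_0-2]$ every term is bounded by $T_{n_0+2}+T_{n-n_0-2}$, notes that this pair is $O(n^{-2\sigma})(T_{n_0}+T_{n-n_0})$ by two applications of the ratio hypothesis, and then multiplies by the $O(n)$ interior terms and invokes $\sigma\ge 1$ to get $O(n^{1-2\sigma})=O(n^{-\sigma})$. This is where the assumption $\sigma\ge 1$—which you never use—becomes essential. The implicit point behind the paper's bound is that $k\mapsto T_k$ is U-shaped on $[n_0+2,\,n-n_0-2]$, so its maximum over that range sits at one of the two endpoints; your telescoping scheme, by contrast, tries to control each $T_k$ individually against a single corner, which is exactly where the one-sidedness of the hypothesis bites.
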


\begin{proof}
Without loss of generality, let $n$ be odd. Then by isolating the two outer terms in the Cauchy product $f(z)g(z)$ we have

\begin{equation}
\sum\limits_{k=n_0}^{n-n_0} f_{k}g_{n - k} = \left( f_{n_0}g_{n-n_0} + f_{n - n_0}g_{n_0}\right) \left( 1 + \sum\limits_{k=n_0+1}^{n-n_0-1} \frac{f_{k}g_{n - k}}{f_{n_0}g_{n-n_0} + f_{n - n_0}g_{n_0}} \right) 
\end{equation}

In the last sum of the above expression, the extremal terms are $\frac{f_{n_0+1}g_{n-n_0-1} + f_{n-n_0-1}g_{n_0+1}}{f_{n_0}g_{n-n_0} + f_{n - n_0}g_{n_0}} = O(n^{-\sigma})$ while the rest are bound by $\frac{f_{n_0+2}g_{n-n_0-2} + f_{n-n_0-2}g_{n_0+2}}{f_{n_0}g_{n-n_0} + f_{n - n_0}g_{n_0}} = O(n^{-2\sigma})$ and there's at most $n-2n_0-3$ of them. Overall the sum is $O(n^{-\sigma})$, giving us the desired result.
\end{proof}

Applying the above lemma iteratively we obtain.
\begin{corollary}\label{lemma:tupletsAsympt}
Let $f(z) = \sum\limits_{n \geq 1} f_n z^n$ be a power series such that $f_1 = 1$, $f_n > 0$ for all $n \geq 2$, and $\frac{f_{n-1}}{f_n} = O(n^{-\sigma})$ for $\sigma \geq 1$. Then for $k, l \in \mathbb{N}^+$
$[z^n] \frac{kf(z)^l}{z^{l-1}} = k l f_n \left(1 + O(n^{-\sigma}) \right)$.
\end{corollary}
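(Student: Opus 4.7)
The plan is to proceed by induction on $l$, piggybacking on \Cref{lemma:extremalCauchy}. Writing $F_l(z) := f(z)^l/z^{l-1}$, I would first record two basic observations: $F_l$ starts at $z^1$ with first coefficient $f_1^l = 1$ for every $l \geq 1$, and $F_l$ satisfies the recurrence $z\,F_l(z) = f(z)\,F_{l-1}(z)$, so $[z^n]F_l = [z^{n+1}](f \cdot F_{l-1})$. The base case $l = 1$ is then immediate since $[z^n]\,kf(z) = k f_n = k\cdot 1 \cdot f_n$.

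For the inductive step, the strategy is to apply \Cref{lemma:extremalCauchy} to the product $f \cdot F_{l-1}$, both of which are power series starting at $z^1$ with positive coefficients and leading coefficient $1$. The lemma yields
\begin{equation*}
[z^{n+1}](f\cdot F_{l-1}) = \bigl(f_1 F_{l-1,n} + f_n F_{l-1,1}\bigr)\bigl(1 + O(n^{-\sigma})\bigr),
\end{equation*}
where $F_{l-1,m} := [z^m] F_{l-1}$. Since $f_1 = F_{l-1,1} = 1$ and the inductive hypothesis supplies $F_{l-1,n} = (l-1) f_n (1 + O(n^{-\sigma}))$, the right-hand side collapses to $l f_n (1 + O(n^{-\sigma}))$, and multiplication by the constant $k$ produces the claimed asymptotic.

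The one point requiring a little extra care, and really the only potential obstacle, is to verify that $F_{l-1}$ itself satisfies the hypotheses of \Cref{lemma:extremalCauchy} needed at the next iteration, namely positivity of all coefficients together with $F_{l-1,n-1}/F_{l-1,n} = O(n^{-\sigma})$. Positivity is immediate since every $f_n > 0$ guarantees that $F_{l-1,n}$ is a sum of positive monomials. The ratio bound follows directly from the inductive asymptotic estimate applied to both $F_{l-1,n-1}$ and $F_{l-1,n}$, giving $F_{l-1,n-1}/F_{l-1,n} = (f_{n-1}/f_n)(1 + O(n^{-\sigma})) = O(n^{-\sigma})$ by the assumption on $f$. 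In other words, the induction should carry both the asymptotic estimate and the ratio bound in tandem, after which no further difficulty arises.
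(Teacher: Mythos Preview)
Your proof is correct and follows essentially the same approach as the paper: induction on $l$, applying \Cref{lemma:extremalCauchy} at each step to the product of $f$ with $F_{l-1}=f^{l-1}/z^{l-2}$, then shifting by one power of $z$. If anything, your version is slightly more careful than the paper's in that you explicitly verify the ratio bound $F_{l-1,n-1}/F_{l-1,n}=O(n^{-\sigma})$ needed to re-apply the lemma, which the paper asserts ``by induction'' without spelling out.
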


\begin{proof}
We proceed by induction. For $l = 2$ we have, by \Cref{lemma:extremalCauchy}, $[z^n] kf(z)^2 = 2k f_{1}f_{n-1}$. Dividing by $z$ amounts to a $f_{n} \mapsto f_{n-1}$ shift in the coefficients, yielding $[z^n] z^{-1} kf(z)^2 = 2k f_{n}$. 

Suppose now that the lemma holds for $l \leq L-1$. Notice that, by induction, $k \frac{f(z)^{L-1}}{z^{L-2}}$ satisfies the properties of \autoref{lemma:extremalCauchy} and with $n_0 = 1$ and $[z] \frac{f(z)^{L-1}}{z^{L-2}} = 1$.
Therefore, we may finally apply the lemma to $k \frac{f(z) f(z)^{L-1}}{z^{L-2}}$, yielding $[z^n] k \frac{f(z) f(z)^{L-1}}{z^{L-2}} = k (L-1) f_{n-1} + f_{n-1} = kLf_{n-1}$. A shift effected by dividing by $z$ completes the proof.
\end{proof}

More generaly, suppose that $f(z)$ enumerates a combinatorial class whose smallest possible structure has size not one but, say, $m$; this is the case for many of the classes discussed in the sequel. Then the above corollary becomes:

\begin{corollary}\label{lemma:tupletsAsymptShifted}
Let $f(z) = \sum\limits_{n \geq m} f_n z^n$ be a power series such that $f_m = 1$, $f_n > 0$ for all $n \geq m+1$, and $\frac{f_{n-1}}{f_n} = O(n^{-\sigma})$ for $\sigma \geq 1$. Then for $k, l \in \mathbb{N}^+$
$[z^n] \frac{kf(z)^l}{z^{m(l-1)}} = k l f_n \left(1 + O(n^{-\sigma}) \right)$.
\end{corollary}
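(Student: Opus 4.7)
The plan is to mimic the proof of Corollary \ref{lemma:tupletsAsympt} almost verbatim, with the role of the base index $1$ played by $m$, and the role of the shift operator ``divide by $z$'' played by ``divide by $z^m$''. In other words, all the arguments carry over once one checks that the auxiliary series produced at each stage of the induction still satisfies the hypotheses of Lemma \ref{lemma:extremalCauchy} with the same $\sigma$ and with the smallest nonzero index equal to $m$.

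More concretely, the base case $l = 2$ is immediate from Lemma \ref{lemma:extremalCauchy} applied with $n_0 = m$: we obtain
$[z^n]\, k f(z)^2 = k\bigl(f_m f_{n-m} + f_{n-m} f_m\bigr)\bigl(1 + O(n^{-\sigma})\bigr) = 2k f_{n-m}\bigl(1 + O(n^{-\sigma})\bigr),$
and then dividing by $z^m$ shifts $f_{n-m}$ to $f_n$, giving $[z^n]\, kf(z)^2/z^{m} = 2kf_n(1 + O(n^{-\sigma}))$, which is the $l=2$ instance.

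For the inductive step, suppose the corollary holds for $l = L-1$, and set $h(z) := f(z)^{L-1}/z^{m(L-2)}$. I would first verify the three properties needed to feed $h$ into Lemma \ref{lemma:extremalCauchy} alongside $f$: namely that $h$ is a power series supported on indices $\geq m$, that its coefficient $h_m$ equals $f_m^{L-1} = 1$, and that the ratio $h_{n-1}/h_n = O(n^{-\sigma})$, which follows from the inductive estimate $h_n = (L-1)f_n(1 + O(n^{-\sigma}))$ together with the hypothesis on $f$. Then observe the algebraic identity
\begin{equation*}
\frac{k f(z)^L}{z^{m(L-1)}} \;=\; \frac{k f(z)\, h(z)}{z^m},
\end{equation*}
apply Lemma \ref{lemma:extremalCauchy} to $f$ and $h$ with $n_0 = m$ at coefficient index $n+m$, and simplify using $h_m = 1$ and the inductive formula for $h_n$ to obtain $[z^{n+m}] k f(z) h(z) = kL f_n(1 + O(n^{-\sigma}))$. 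The final shift by $z^m$ yields the claim.

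The main obstacle is not any deep argument but the bookkeeping around the shifts: one has to be careful that the index $m$ at which the lemma is applied stays the same throughout the induction, and that each intermediate series $h$ genuinely inherits the hypothesis $h_{n-1}/h_n = O(n^{-\sigma})$ from $f$ through the inductive estimate. This is where the assumption $f_m = 1$ (rather than merely $f_m \ne 0$) is genuinely used, since it keeps $h_m = 1$ at every stage and allows the extremal-term analysis of Lemma \ref{lemma:extremalCauchy} to yield the clean factor $L$ at the next step.
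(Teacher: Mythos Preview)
Your proposal is correct and follows exactly the approach the paper intends: it is the direct adaptation of the proof of Corollary~\ref{lemma:tupletsAsympt} with the base index $1$ replaced by $m$ and the shift ``divide by $z$'' replaced by ``divide by $z^m$''. The paper does not spell out a separate proof for this shifted version, merely stating it as the obvious generalisation, and your write-up in fact supplies more detail (the verification that $h$ inherits the growth hypothesis and that $h_m=1$) than the paper's own proof of the $m=1$ case.
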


\section{First-order ODEs and Poisson distributions}
In this section our aim is to explore the limit distribution of the number of bridges in trivalent maps and of closed subterms in closed linear $\lambda$-terms. Our approach will be based on combinatorial specifications of maps and terms in $\closedTerms$ respectively. As it turns out, these specifications yield differential equations governing the behaviour of our parameters of interest. To analyse these differential equations we will introduce, in \Cref{subsec:poissonSchema}, a schema providing sufficient conditions for the limit distribution of some combinatorial parameter of a divergent combinatorial class to weakly converge to a Poisson distribution of rate $1$, or a shifted version of such a distribution. Armed with this schema we will then proceed to first prove a special case of our desired result: the limit distribution of the number of loops in trivalent maps and of identity-subterms in closed linear $\lambda$-terms is $Poisson(1)$. Finally, we prove that the same holds for the number of bridges and subterms too.

As a warmup, we begin with a discussion of bridgeless trivalent maps and linear $\lambda$-terms.

\subsection{Bridgeless maps and linear $\lambda$-terms}

Let the class $\bridgelessTerms$ of {\em bridgeless} rooted trivalent maps and closed linear $\lambda$-terms be the subclass of $\closedTerms$ consisting of rooted trivalent maps with no internal bridges, or equivalently to closed linear $\lambda$-terms which have no closed proper subterms.

We begin by stating the following trivial isomorphism between $\bridgelessTerms$ and the class $\oneVarOpBridgeTerms$ of {\em one-variable-open} bridgeless linear terms, that is, linear terms $x \vdash t$ such that $t$ has no closed subterm.
Considered as maps, elements of $\oneVarOpBridgeTerms$ contain no internal bridges and exactly two external vertices (one corresponding to the root and the other to the free variable).

\begin{proposition}\label{lem:BEqlsZb} %
\begin{equation}\label{eq:BEqlsZb}
  \bridgelessTerms = \singleton\oneVarOpBridgeTerms
\end{equation}
\end{proposition}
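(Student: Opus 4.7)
The plan is to exhibit the isomorphism directly, giving complementary arguments on the $\lambda$-calculus side and the map side.

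On the $\lambda$-calculus side, take any $t \in \bridgelessTerms$. Since $t$ is closed, it is not a variable. If $t = (t_1~t_2)$ were an application, linearity would partition the free variables between $t_1$ and $t_2$, and closedness of $t$ would force both $t_1$ and $t_2$ to be closed, making $t_1$ a closed proper subterm of $t$ and contradicting bridgelessness. Hence $t = \lambda x.t'$ for some $t'$ whose free variable set is exactly $\{x\}$. Bridgelessness propagates to $t'$ because any proper subterm of $t'$ is also a proper subterm of $t$ and hence not closed, and $t'$ itself is open, so $t' \in \oneVarOpBridgeTerms$. Conversely, for any $t' \in \oneVarOpBridgeTerms$, the term $\lambda x.t'$ is closed, and its proper subterms are exactly those of $t'$, none of which is closed. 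The identity $|\lambda x.t'| = 1 + |t'|$ then supplies the $\singleton$ atom.

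On the map side, the same isomorphism can be read off from the Tutte decomposition recalled in \Cref{subsection:intro}. A map in $\closedTerms$ is not the trivial one-edge map, which has arity $1$. In the application case, the trivalent neighbor $v$ of the root is a cut-vertex and each of its two non-root incident edges joins trivalent vertices lying in distinct components of the residual, so both are internal bridges, contradicting membership in $\bridgelessTerms$. Hence the bridgeless case forces the abstraction decomposition, yielding a connected residual map with one extra external vertex. This residual is bridgeless, since its internal bridges correspond bijectively to the internal bridges of the original map that do not lie in the small neighborhood of the root; conversely, adjoining a fresh root edge and trivalent neighbor to an element of $\oneVarOpBridgeTerms$ creates no new internal bridges. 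The re-attached root edge contributes the $\singleton$ factor.

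The main (and really only) step is ruling out the application case; after that, the statement is just a restriction of the standard abstraction/Tutte step to the bridgeless subclass, with no further work beyond checking size bookkeeping and the two-way preservation of bridgelessness when stripping or adjoining an outermost abstraction.
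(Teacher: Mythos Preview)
Your proof is correct and follows the same approach as the paper: strip off the outermost abstraction (equivalently, reverse the abstraction step of the Tutte decomposition) and check that bridgelessness is preserved in both directions. If anything, your argument is more explicit than the paper's, which simply writes ``Let $l = \lambda x.u \in \bridgelessTerms$'' without spelling out why the variable and application cases are excluded; you justify this cleanly on both the term and map sides.
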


\begin{proof}
Let $l = \lambda x.u \in \bridgelessTerms$. Then by deleting the outermost abstraction of $l$ we obtain $x \vdash u \in \oneVarOpBridgeTerms$. For the opposite direction, we have that any one-variable-open bridgeless term $x \vdash u \in \oneVarOpBridgeTerms$ uniquely yields a term $\lambda x.u \in \bridgelessTerms$.

In terms of maps, let $m \in \bridgelessTerms$ with root vertex $r$ and $a$ its unique neighbour. Then one direction of \Cref{eq:oneVarOpenBridgelessDecomp} corresponds to the observation that such a map is the one-edge map or is such that by deleting $r$ and the first edge $ax$ encountered after $ra$ in a counterclockwise tour of $a$, one obtains, after rooting at $a$, a map $m \setminus r \setminus ax \in \oneVarOpBridgeTerms$ which has two external bridges: one incident to $a$ and the other to $x$. For the other direction, we note that for a map $m \in \bridgelessTerms$, $m$ is either the one-edge map or we can use it to uniquely recreate a map $m' \oneVarOpBridgeTerms$ by adding a new edge between the root and the unique degree-1 vertex of $m$ before introducing a new root and an edge between it and the old one.
\end{proof}

To construct the bijections in the rest of this subsection, we will rely on the following lemma.

\begin{lemma}\label{lemma:LOrdedSTerms}
  Let $\Gamma,y \vdash t$ be a linear $\lambda$-term with some free variable $y$ (and possibly others $\Gamma$).  Then the set of subterms $S = \{ s \subtm t~\lvert~ y \vdash s \}$ is linearly ordered by the subterm relationship.
  Since it is moreover non-empty (with $y \in S$), it contains a unique maximal element.
\end{lemma}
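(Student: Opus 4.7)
The plan is to use linearity to pin down a common ``landmark'' shared by every element of $S$, namely the unique occurrence of $y$ in $t$, and then to observe that any two subterms of $t$ that share a common syntactic node must be nested in one another.

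First I would note that, by the formation rules of \Cref{subsec:lambda} and the linearity constraint, the judgment $\Gamma, y \vdash t$ forces the variable $y$ to have exactly one occurrence in $t$, as a single leaf of its syntax tree; moreover, since no abstraction binding $y$ appears in $t$, that occurrence remains free in every surrounding subterm. For any $s \in S$ the judgment $y \vdash s$ requires $y$ to be free in $s$, so $s$ must contain this unique $y$-occurrence.

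The main step is then to show that any two subterms of $t$ containing a prescribed leaf are comparable under $\subtm$. This is a routine induction on $t$: the variable case is trivial; in the application case $t = (t_1~t_2)$, linearity places the distinguished leaf in exactly one of $t_1, t_2$, so any subterm containing it either equals $t$ or lies entirely inside whichever of the two hosts it, at which point the induction hypothesis applies; the abstraction case $t = \lambda x.t_0$ is analogous, reducing to the induction hypothesis on $t_0$. Equivalently, via the one-hole-context characterization $s \subtm t \iff t = \plug{c}{s}$, the statement is that the ancestors of a fixed leaf of the syntax tree of $t$ form a chain under $\subtm$.

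Given this, $S$ is a subset of such a chain and is therefore linearly ordered. It is non-empty because $y$ itself satisfies $y \subtm t$ together with $y \vdash y$, and being finite it admits a unique maximal element. The only step requiring any genuine argument is the ``ancestors form a chain'' observation, and that is entirely a combinatorial property of the syntax tree; consequently I expect the entire proof to fit in a few lines.
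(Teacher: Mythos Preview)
Your proposal is correct and follows essentially the same inductive argument as the paper's proof, with a slightly different framing: you factor the argument through the general tree-theoretic fact that the subterms containing a fixed leaf form a chain, and then observe that $S$ sits inside this chain, whereas the paper inducts directly on $t$ to show that $S$ itself is a chain, handling in each case whether or not $t$ is added to the inductively-obtained chain $S'$. The underlying case analysis (variable, abstraction, application with linearity forcing $y$ into a unique branch) is identical.
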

\begin{proof}
We proceed by induction on $t$:

{\em Case 0: $t = y$ is a variable}.  Then $S = \{ y \preceq y \}$ is the trivial linear order on a one-element set.

{\em Case 1: $t = \lambda z.u$ is an abstraction}.  We have that $\Gamma,y,z \vdash u$, and by induction, the set $S' = \{ s \subtm u ~\lvert~ y \vdash s \}$ is linearly ordered.
But either $S = S'$ (if $\Gamma$ is non-empty) or else $S = S' \cup t$ (if $\Gamma$ is empty), in which case we can uniquely extend the linear order on $S'$ to $S$ noting that every element of $S'$ is a proper subterm of $t$.

{\em Case 2: $t = (t_1~t_2)$ is an application}.  By linearity, we have that $\Gamma_1 \vdash t_1$ and $\Gamma_2 \vdash t_2$ for some $\Gamma_1$ and $\Gamma_2$ such that $\Gamma,y$ is some shuffle of $\Gamma_1$ and $\Gamma_2$.  In particular, $y$ must appear free in one of $t_1$ or $t_2$, and without loss of generality suppose it is $t_1$ and that $\Gamma_1 = (\Gamma_1',y)$ for some $\Gamma_1'$.
Then by induction the set $S' = \{ s \subtm t_1 ~\lvert~ y \vdash s \}$ is linearly ordered, and again, either $S = S'$ (if $\Gamma$ is non-empty) or else $S = S' \cup t$ (if $\Gamma$ is empty), in which case we can uniquely extend the linear order on $S'$ to $S$.
\end{proof}

We now proceed with an equation for the class $\oneVarOpBridgeTerms$.
\begin{lemma} %
\begin{equation}\label{eq:oneVarOpenBridgelessDecomp}
  \oneVarOpBridgeTerms = \singleton + {\large \textcal{2}} \times \singleton^2 \times \oneVarOpBridgeTerms \times \oneVarOpBridgeTerms^{\bullet}
\end{equation}
where ${\large \textcal{2}} = \mathcal{E} + \mathcal{E}$ stands for the class with two neutral objects $\epsilon_1, \epsilon_2$ and $\oneVarOpBridgeTerms^{\bullet}$ denotes the pointing of $\oneVarOpBridgeTerms$, that is, the class of one-variable-open linear terms with no closed proper subterms and a marked subterm, or equivalently rooted trivalent maps with two external vertices, no internal bridges and a marked edge.
\end{lemma}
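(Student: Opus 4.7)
The plan is to exhibit an explicit bijection matching the two sides of \Cref{eq:oneVarOpenBridgelessDecomp}. The trivial element $x$ accounts for the $\singleton$ summand, so the real work is in the non-trivial case where $t = \lambda y.u$ with $u$ linear, having free variables $\{x,y\}$, and bridgeless.

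For such a $t$, I would first apply \Cref{lemma:LOrdedSTerms} to $u$ (taking the lemma's distinguished free variable to be $x$) to extract the unique maximal subterm $u_x \subseteq u$ whose sole free variable is $x$. A short linearity argument shows that the parent of $u_x$ in $u$ must be an application rather than an abstraction: if the parent were $\lambda w.u_x$, linearity would force $w$ to be used in $u_x$, hence $w = x$, contradicting the $\alpha$-convention since $x$ is free throughout $t$. Writing this parent as $(u_x\,s^*)$ or $(s^*\,u_x)$ yields the $\textcal{2}$ binary factor. I then set $B_1 := u_x$ and define $B_2$ to be the term obtained by replacing this parent application by just $s^*$ inside $u$, followed by $\alpha$-renaming the lone remaining free variable $y$ to $x$. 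The two deleted syntactic nodes---$\lambda y$ and the parent application---account for the $\singleton^2$ factor, while the subterm $s^*$ persists inside $B_2$ and supplies the marked atom for the $\oneVarOpBridgeTerms^{\bullet}$ factor.

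The main verification is that $B_1$ and $B_2$ both lie in $\oneVarOpBridgeTerms$. For $B_1 = u_x$ this is immediate from its free-variable profile and the fact that its subterms are subterms of $t$. For $B_2$ the delicate point is bridgelessness: subterms inherited unchanged from $u$ are non-closed, while subterms modified by the replacement correspond to ancestors $A$ of the removed parent application, whose free-variable set contains both $x$ and every free variable of $s^*$; because $s^*$ is itself non-closed in $u$, removing $x$ still leaves at least one free variable, so the modified $A$ remains non-closed in $B_2$. The size identity $|t| = 2 + |B_1| + |B_2|$ follows immediately from $|u| = |B_1| + 1 + |B_2|$.

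The reverse map is straightforward: given $B_1, B_2 \in \oneVarOpBridgeTerms$, a marked subterm $s^*$ of $B_2$, and a binary choice, one $\alpha$-renames the free variable of $B_2$ to $y$, replaces the marked $s^*$ by $(B_1\,s^*)$ or $(s^*\,B_1)$ per the binary choice, and prepends $\lambda y$; the same kind of case analysis on subterms confirms the result is again in $\oneVarOpBridgeTerms$. The subtlest step I anticipate is confirming that the round trip recovers the original $t$---that in the reconstructed body, the inserted copy of $B_1$ is genuinely detected as the maximal $x$-pure subterm---which again reduces to $s^*$ being non-closed, so that no ancestor of the inserted $B_1$ becomes $x$-pure.
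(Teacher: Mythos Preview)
Your approach mirrors the paper's closely: both apply \Cref{lemma:LOrdedSTerms} to the body $u$ of $t=\lambda y.u$, isolate a maximal one-variable-pure subterm, argue that its parent is an application, and split there. The one twist is that the paper extracts the maximal $y$-pure subterm (for the freshly bound variable), whereas you extract the maximal $x$-pure subterm (for the globally free variable). This produces a different bijection---in the paper the unpointed factor is the $y$-pure piece and the pointed factor is the $x$-pure remainder, while for you it is the reverse---but the structure of the argument is identical, and both choices work. You should also state explicitly why a non-variable $t\in\oneVarOpBridgeTerms$ cannot be an application (the paper does: one factor would be closed).

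There is, however, a genuine slip in your verification that $B_2$ is bridgeless. You assert that any ancestor $A$ of the removed application has a free-variable set containing every free variable of $s^*$. This is false in general: binders lying between $s^*$ and $A$ may capture variables of $s^*$. For example, take $u = \lambda z.((x\,z)\,y)$; then $u_x = x$, $s^* = z$, and for the ancestor $A = u$ we have $\text{FV}(A)=\{x,y\}$, which does not contain $z$. The conclusion you need still holds, but for a different reason: any such $A$ strictly contains $u_x$, so by maximality of $u_x$ it is not $x$-pure, and by bridgelessness of $t$ it is not closed; since $x\in\text{FV}(A)$ (there being no binder of $x$ inside $u$), we get $\text{FV}(A)\supsetneq\{x\}$ and hence $\text{FV}(A')=\text{FV}(A)\setminus\{x\}\neq\emptyset$. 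Replacing your sentence with this maximality argument closes the gap.
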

\begin{proof}
Let $x \vdash t \in \oneVarOpBridgeTerms$.
Then $t$ is either a variable, which is accounted for by the $\singleton$ summand, or else it must be an abstraction term.
Indeed $t$ cannot be an application term $t = (t_1~t_2)$ since, by linearity, either $t_1$ or $t_2$ would have to be closed, contradicting the assumption that $t$ has no closed subterms.
Assume then that $t = \lambda y.t'$ for some $x, y \vdash t'$ with two free variables.
Now, by \Cref{lemma:LOrdedSTerms}, let $y \vdash t_m$ be the subterm of $t'$ that is maximal among terms with free variable $y$, and let $x \vdash c_m$ be the corresponding context $t' = c_m[t_m]$. %
By assumption that $t$ has no closed subterms, $t_m$ must in occur in an application of the form $(t_m~u)$ or $(u~t_m)$ for some $u$, that is, the context $c_m$ must decompose as $c_m = c_m' \circ (\hole~u)$ or $c_m = c_m' \circ (u~\hole)$.
In either case, by plugging $u$ for the hole of $c_m'$ we are left with a one-variable-open term $x \vdash c_m'[u]$ with no closed proper subterms and a marked subterm.
But then the triple $(\epsilon_i, t_m, c_m'[u])$ forms an element of ${\large \textcal{2}} \times \oneVarOpBridgeTerms \times \oneVarOpBridgeTerms^{\bullet}$, where the choice of $\epsilon_i$ records which of the two cases ($c_m = c_m' \circ (\hole~u)$ or $c_m = c_m' \circ (u~\hole)$) we are in, and conversely any such triple uniquely determines a term $t = \lambda y.c_m'[t_m~u]$ or $t = \lambda y.c_m'[u~t_m]$.
This establishes the right summand on the right-hand side of \eqref{eq:oneVarOpenBridgelessDecomp}, with the extra factor of $\singleton$ accounting for the fact that we removed one application and one abstraction in passing from $t$ to $(\epsilon_i, t_m, c_m'[u])$. For a graphical example of \Cref{eq:oneVarOpenBridgelessDecomp} see \Cref{fig:oneVarOpenBridgelessDecomp}.
\end{proof}

\begin{figure}[h]
  \centering
  \includegraphics[scale=1.2]{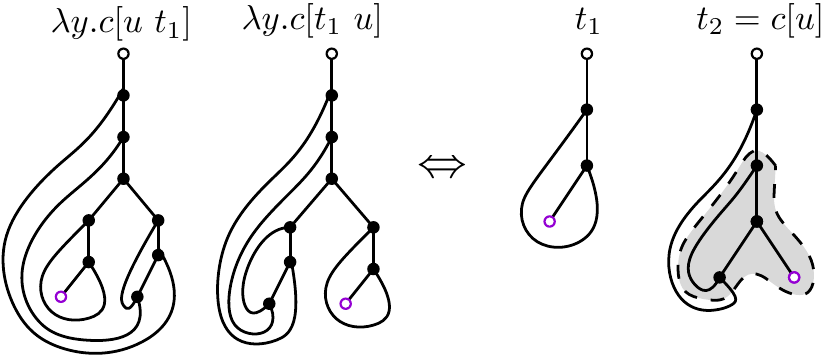}%
  \caption{Two maps in $\oneVarOpBridgeTerms$ together with the two corresponding maps in $t_1 \in \oneVarOpBridgeTerms$, $t_2 \in \oneVarOpBridgeTerms^{\bullet}$ used in their decomposition according to \Cref{eq:oneVarOpenBridgelessDecomp}.}
  \label{fig:oneVarOpenBridgelessDecomp}
\end{figure}

Combining \Cref{eq:BEqlsZb,eq:oneVarOpenBridgelessDecomp} also yields an equation for $\bridgelessTerms$.
\begin{corollary}
\begin{equation}\label{eq:bridgelessDecomp}
  \bridgelessTerms + 2\singleton\bridgelessTerms^2 = \singleton^2 + 2\singleton \times \bridgelessTerms \times \bridgelessTerms^{\bullet}
\end{equation}
\end{corollary}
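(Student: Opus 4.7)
The plan is essentially an algebraic manipulation, combining \Cref{eq:BEqlsZb} and \Cref{eq:oneVarOpenBridgelessDecomp} while keeping careful track of what pointing means on the two classes $\bridgelessTerms$ and $\oneVarOpBridgeTerms$. The expected obstacle is not conceptual but bookkeeping: since we are working at the level of combinatorial classes rather than generating functions we should avoid formal subtraction, so the additive shape of the target identity is forced on us by the need to cancel a ``boundary'' term coming from the outermost abstraction.

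First I would establish the auxiliary isomorphism
\[
  \bridgelessTerms^{\bullet} \;=\; \bridgelessTerms + \singleton\,\oneVarOpBridgeTerms^{\bullet},
\]
which is a direct consequence of the bijection $\bridgelessTerms \cong \singleton\,\oneVarOpBridgeTerms$ from \Cref{lem:BEqlsZb}. Indeed, given a closed bridgeless term $l = \lambda x.u$ together with a marked subterm $s \preceq l$, either $s = l$ is the outermost abstraction itself (contributing $\bridgelessTerms$, with the mark determined canonically), or else $s$ is a subterm of $u \in \oneVarOpBridgeTerms$ (contributing $\singleton\,\oneVarOpBridgeTerms^{\bullet}$, where the $\singleton$ records the stripped-off abstraction). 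Translating to generating functions, this is the identity $zB'(z) = B(z) + z\cdot(zO'(z))$, which is just what one gets from differentiating $B = zO$.

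Next I would multiply both sides of \Cref{eq:oneVarOpenBridgelessDecomp} by $\singleton$ so that the left-hand side reads $\singleton\,\oneVarOpBridgeTerms = \bridgelessTerms$ by \Cref{eq:BEqlsZb}, giving
\[
  \bridgelessTerms \;=\; \singleton^2 + 2\,\singleton^3\,\oneVarOpBridgeTerms\,\oneVarOpBridgeTerms^{\bullet}.
\]
On the right-hand side, I would group one factor of $\singleton$ with $\oneVarOpBridgeTerms$ to produce another copy of $\bridgelessTerms$, and the remaining $\singleton^2\,\oneVarOpBridgeTerms^{\bullet}$ I would rewrite as $\singleton\cdot(\singleton\,\oneVarOpBridgeTerms^{\bullet})$, obtaining
\[
  \bridgelessTerms \;=\; \singleton^2 + 2\,\singleton\,\bridgelessTerms\cdot(\singleton\,\oneVarOpBridgeTerms^{\bullet}).
\]

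Finally I would substitute the auxiliary identity from the first step, replacing $\singleton\,\oneVarOpBridgeTerms^{\bullet}$ by the ``difference'' of $\bridgelessTerms^{\bullet}$ and $\bridgelessTerms$. Since subtraction is not available at the class level, what this really amounts to is adding $2\,\singleton\,\bridgelessTerms\cdot\bridgelessTerms$ to both sides and using the substitution on the right: this yields exactly
\[
  \bridgelessTerms + 2\,\singleton\,\bridgelessTerms^{2} \;=\; \singleton^{2} + 2\,\singleton\,\bridgelessTerms\,\bridgelessTerms^{\bullet},
\]
which is \Cref{eq:bridgelessDecomp}. The only point demanding care is the initial identification of $\bridgelessTerms^{\bullet}$ with $\bridgelessTerms + \singleton\,\oneVarOpBridgeTerms^{\bullet}$; everything else is substitution and regrouping.
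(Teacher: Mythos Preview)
Your proposal is correct and follows exactly the route the paper intends: the corollary is stated without proof beyond ``Combining \Cref{eq:BEqlsZb,eq:oneVarOpenBridgelessDecomp},'' and your argument is precisely that combination, with the pointing identity $\bridgelessTerms^{\bullet} = \bridgelessTerms + \singleton\,\oneVarOpBridgeTerms^{\bullet}$ made explicit as the one nontrivial intermediate step.
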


The following lemma provides a bijection between non-identity/non-loop elements of $\bridgelessTerms$ and elements of $\closedTerms$ having exactly one internal bridge/closed proper subterm. %

\begin{lemma}\label{lemma:bijectionOneBridgeNoBridge}
The class $\bridgelessTerms \setminus \{\loopMap\}$ is in bijection with the subclass of $\oneBridgeTerms \subset \closedTerms$ consisting of rooted trivalent maps having exactly one internal bridge and closed linear $\lambda$-terms having exactly one proper closed subterm.
\end{lemma}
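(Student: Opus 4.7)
The plan is to construct an explicit, size-preserving bijection
\[
\psi \colon \bridgelessTerms \setminus \{\loopMap\} \longrightarrow \oneBridgeTerms
\]
by ``sliding an outer abstraction inward''. First, a short structural observation: any $t \in \bridgelessTerms$ must be an abstraction $t = \lambda x.s$, because a variable is never closed and an application in a closed linear term would, by linearity, force one of its factors to be a closed proper subterm, contradicting bridgelessness. If in addition $t \ne \loopMap = \lambda x.x$, the same argument applied to $s$ (whose unique free variable is $x$) shows that $s$ itself is an abstraction $\lambda y.s''$, with $s''$ having free variables exactly $\{x,y\}$ by linearity.

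I then apply \Cref{lemma:LOrdedSTerms} to $s''$ with respect to $y$: the subterms of $s''$ having $y$ as unique free variable form a non-empty linearly ordered set with a unique maximal element $s''_m$, yielding a unique decomposition $s'' = c'[s''_m]$. Because $s''$ also has $x$ free, $s''_m \ne s''$ and thus $c' \ne \hole$. The bijection is then defined by
\[
  \psi(\lambda x.\lambda y.s'') \;=\; \lambda x.\,c'[\lambda y.\,s''_m],
\]
which is manifestly size-preserving. The inverse is equally explicit: any $t' \in \oneBridgeTerms$ uniquely decomposes as $t' = \lambda x.\,c'[u]$ where $c'$ is the one-hole context pointing at the unique proper closed subterm $u$; since closed linear terms are always abstractions, $u = \lambda y.u_0$, and I set $\psi^{-1}(t') = \lambda x.\lambda y.c'[u_0]$.

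The principal step, which I expect to be the main obstacle, is verifying that $\psi(t)$ has \emph{exactly one} proper closed subterm, namely $\lambda y.\,s''_m$. Closedness of this candidate is clear; for uniqueness I classify all other subterms of $\psi(t)$ into three families. Subterms entirely inside $s''_m$ are never closed, since a closed subterm of $s''_m$ would descend to a closed proper subterm of the bridgeless $t$. Subterms lying in the non-hole part of $c'$ are never closed for the same bridgelessness reason. Finally, subterms of the form $c'_1[\lambda y.\,s''_m]$ for a proper right-subcontext $c'_1 \ne \hole$ of $c'$ must be ruled out using the maximality of $s''_m$: for any such $c'_1$, the corresponding subterm $c'_1[s''_m]$ of $s''$ has at least one free variable besides $y$, and that free variable persists in $c'_1[\lambda y.\,s''_m]$, since replacing $s''_m$ by the closed term $\lambda y.\,s''_m$ can only remove $y$ from the list of free variables. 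A symmetric analysis shows that $\psi^{-1}(t') \in \bridgelessTerms \setminus \{\loopMap\}$ (the exclusion of $\loopMap$ being automatic for size reasons), and the two compositions reduce to identities by reidentifying $u_0$ as the maximal subterm of $c'[u_0]$ with $y$ as unique free variable, via another application of \Cref{lemma:LOrdedSTerms}. I would also briefly record the parallel map-theoretic description: $\psi$ relocates the trivalent vertex carrying the second-outermost abstraction of $t$ so that it sits just above the submap encoding $s''_m$, thereby creating exactly one internal bridge at a canonical position.
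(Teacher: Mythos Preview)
Your proof is correct and follows essentially the same approach as the paper: the bijection $\lambda x.\lambda y.c[u] \leftrightarrow \lambda x.c[\lambda y.u]$ using the maximal subterm with unique free variable $y$ furnished by \Cref{lemma:LOrdedSTerms}, together with the obvious inverse that locates the unique closed proper subterm. Your verification that $\psi(t)$ has exactly one proper closed subterm is in fact more carefully spelled out than the paper's (which dispatches it in one sentence), and your closing remark about the map-theoretic ``sliding'' picture matches the paper's parallel description as well.
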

\begin{proof}
The bijection may be summarized schematically by the transformation
\[
  \lambda x.\lambda y.\plug{c}{u} \leftrightarrow \lambda x.\plug{c}{\lambda y.u}
\]
where the left-to-right direction is \emph{a priori} underspecified but can be fixed using \Cref{lemma:LOrdedSTerms}.
Visually, the bijection may also be summarized as a certain ``sliding'' operation on maps, see \Cref{fig:bijectionOneBridgeNoBridge}.

In more detail, let us write $\phi$ and $\phi^{-1}$ for the two directions of the correspondence
$\bridgelessTerms \setminus (\lambda x.x) \to \oneBridgeTerms$ and $\oneBridgeTerms \to \bridgelessTerms \setminus (\lambda x.x)$, respectively.
We begin by defining these functions on lambda terms, and then give the equivalent definition on maps.

{\em $\phi$ and $\phi^{-1}$ on lambda terms}:
Let $t \in \bridgelessTerms \setminus \{ \lambda x.x \}$ and note that $t$ must be of the form $t = \lambda x. \lambda y. t_0$.
Indeed, $t$ is necessarily an abstraction $t = \lambda x.t_1$ by \Cref{lem:BEqlsZb}, and if $t_1$ were an application $t_1 = (t_2~t_3)$ then, by linearity, one of $t_2$ or $t_3$ would be closed, a contradiction. Therefore $t_1$ is also an abstraction $t_1 = \lambda y.t_0$ by the assumption that $t \ne \lambda x.x$.
Consider now all possible ways of decomposing $t_0 = \plug{c}{u}$ into a subterm $y \vdash u$ with free variable $y$ and its surrounding context $x \vdash c$, and define
\[ \phi(t) = \lambda x.\plug{c_m}{\lambda y.u_m} \]
by taking the decomposition $t_0 = \plug{c_m}{u_m}$ such that $u_m$ is \emph{maximal,} which exists by \Cref{lemma:LOrdedSTerms}.
Since $u_m$ and $c_m$ do not contain any closed proper subterms by assumption, the term $\phi(t)$ has exactly one closed proper subterm $\lambda y.u_m$.

Conversely, if $t' \in \oneBridgeTerms$ is a term with exactly one closed proper subterm, then it necessarily decomposes as $t' = \lambda x.\plug{c}{\lambda y.u}$ for some closed subterm $\lambda y.u$ with surrounding context $\lambda x.c$, and we take $\phi^{-1}(t') = \lambda x.\lambda y.\plug{c}{u}$.
Observe that $u$ is maximal among subterms of $\phi^{-1}(t')$ with free variable $y$, which ensures that $\phi^{-1}$ really is an inverse to $\phi$.

This already completes the proof of the bijection, but we now describe it again on maps.

{\em Direction: $\phi: \bridgelessTerms \setminus \{\loopMap\} \rightarrow \oneBridgeTerms$ on maps}. Let $m \in \bridgelessTerms \setminus \{\loopMap\}$ be a bridgeless rooted trivalent map that is not a loop, let $\tau(m) = t$ be its corresponding linear term, and let $r$ be its root. Let, also, $x, y$ be the child and grandchild (in the $t$-tree of $m$) of the root $r$. Then, by bridgelessness of $m$, we have that neither of $x, y$ can be cut vertices and therefore there exists an edge $e = yz$ incident to $y$ which doesn't belong to the $t$-tree of $m$. We then construct a new map $m' = (m \setminus e) / y$ and distinguish two cases based on whether $m'$ is bridgeless or not. In the case where $m'$ is bridgeless, we create the map $\phi(m)$ by introducing a new vertex $q$ and two new edges $qq$ and $qz$ making $q$ a loop and a neighbour of $z$. In the second case in which $m'$ has bridges we note that they must all belong to unique path between $r$ and $z$ in the spanning tree $t' = t / y$ of $m'$; indeed if there was another another bridge which wasn't in the $t'$-path between $r$ and $z$ it would necessarily also be present in the initial map $m$, contradicting its bridgelessness. Therefore we can choose uniquely the bridge $b = vw$ whose endpoints lie closest to the root $r$ along the $r$-$z$ path and delete it to form the map $m' \setminus b'$ to which we then introduce a new vertex $q$ and three edges $qv, qw, qz$, making it adjacent to the former endpoints of $b'$ and also $z$. In all of the above cases the maps have a unique bridge incident to $q$, yielding an element of $\oneBridgeTerms$ as desired.

{\em Direction: $\phi^{-1}: \oneBridgeTerms \rightarrow \bridgelessTerms \setminus \{\loopMap\}$ on maps}. Conversely, let $m \in \oneBridgeTerms$ with $r, x, v$ the root and its child and grandchild (in the $\tau(m)$-tree). We denote the unique bridge of $m$ by $b = vq$ and the two connected components of $m \setminus b$ by $C_1, C_2$, with the convention that $C_1$ contains $r$ and $v$ while $C_2$ contains $q$. Note that since no other edge incident to $q$ can be a bridge, there exists some edge $e = qz$ which doesn't belong to the $\tau(m)$-tree of $m$.
If $q$ is a loop, then we form a new map $m' \setminus q$ and introduce to it a new vertex $y$ along with three new edges $xy, vy, zy$. Otherwise we form the map $m' = (m \setminus e \setminus xv) / q$ and introduce to it a new vertex $y$ and three new edges $xy$, $vy$, $zy$ making it adjacent to $x,v,$ and $z$. In either cases the new map $m'$, considered rooted at $r$, is trivalent and moreover is bridgeless since for any vertex formerly belonging to $C_2$ there now exists a path (via the newly added edge $yz$) connecting it to the any of the vertices formerly belonging to $C_1$.

Finally, to establish that the map operations $\phi, \phi^{-1}$ are inverses of eachother we let $m_1 \in \bridgelessTerms \setminus \{\loopMap\}$ be a non-loop bridgeless map, $m_2 \in \oneBridgeTerms$ be a one-bridge map, and we label their vertices $x,y,z,w,v,q$ as above. If $m_1 \setminus yz$ is bridgeless, then the map $\phi(m_1) \setminus q$ is by construction isomorphic to $(m_1 \setminus yz) / y$ which guarantees that $\phi^{-1}(\phi(m_1)) = m_1$ since $\phi^{-1}$ operates on $\phi(m_1)$ exactly by deleting the $q$ and introducing a vertex $y$ making it incident to $x, v, z$. Conversely, if $m_2 \in \oneBridgeTerms$ is a map with a unique bridge incident to a loop, then $(\phi^{-1}(m_2) \setminus yz) / y$ is by construction isomorphic to $m_2 \setminus q$ and so we have $\phi(\phi^{-1}(m)) = m$ since $\phi$ operates on $\phi^{-1}(m_2)$ by deleting $yz$, dissolving $y$, and introducing a new loop vertex $q$ making it a neighbour of $z$. Now, if $m_1 \setminus yz$ is not bridgeless, then the map $(\phi(m_1) \setminus qz) / q$ is by construction isomorphic to $(m_1 \setminus yz) / y$ and so once again by following the operation $\phi^{-1}$ on $\phi(m)$ we obtain $\phi^{-1}(\phi(m_1)) = m_1$. Finally, if $m_2 \in \oneBridgeTerms$ has no loop, then $(\phi^{-1} \setminus yz) / y$ is isomorphic to $(m_2 \setminus qz) / q$ once again giving $\phi(\phi^{-1}(m_2)) = m$ as desired. %

For graphical examples of the bijection, see \Cref{fig:bijectionOneBridgeNoBridge}. 

\end{proof}

\begin{figure}
  \centering
  \includegraphics[scale=1.5]{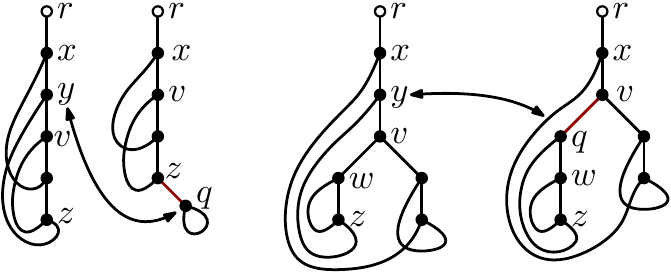}
  \caption{Two pairs of maps related by the bijection ``$\lambda x.\lambda y.\plug{c}{u} \leftrightarrow \lambda x.\plug{c}{\lambda y.u}$'' explained in \Cref{lemma:bijectionOneBridgeNoBridge}, with vertices labeled as in the proof.}%
  \label{fig:bijectionOneBridgeNoBridge}
\end{figure}

Returning to the specification given in \Cref{eq:oneVarOpenBridgelessDecomp}, we see that it yields the following differential equation satisfied by the generating function $b(z)$ of $\oneVarOpBridgeTerms$.
\begin{equation}\label{eq:bridgelessOGF1}
b(z) = z + 2z^3b(z) \frac{\partial}{\partial z}b(z)
\end{equation}
We note that, by \Cref{eq:BEqlsZb}, the generating function $B(z)$ of $\bridgelessTerms$ satisfies $B(z) = zb(z)$ and so we can focus on the easier-to-analyse $b(z)$ to obtain estimates for the asymptotic growth of $[z^n] B(z)$.

From \Cref{eq:bridgelessOGF1} one may extract the following recurrence for $b_n = [z^n] b(z)$, $n \geq 4$:
\begin{equation*}
b_n = 2\sum\limits_{k=4}^{n} b_{k-3} ~ (n - k + 1) ~  b_{n - k +1}.
\end{equation*}
We can obtain a lower bound for the sequence $b_n$ (which generates \oeis{A267827}) by first isolating the summands corresponding to $k = 4$ and $k = n$ in the above recurrence, and then translating it to a differential equation taking into account the initial values $b_1 = 1, b_2 = b_3 = 0$ to obtain
\begin{equation*}
\underline{b}(z) = z - 2z^4 + 2z^3\underline{b}(z) + 2z^4 \frac{\partial}{\partial z} \underline{b}(z)
\end{equation*}
where now $\underline{b}(z)$ is such that $[z^n] \underline{b}(z) \le b_n$ for all $n$.
Let $\underline{\hat{b}}(z) = \sum_{n} \frac{\underline{b}_n}{n!} z^n$ be the {\em Borel transform} of $\underline{b}$.
Then $\underline{\hat{b}}$ satisfies
\begin{equation*}
\underline{\hat{b}}(z) = z^2 - 2z\underline{\hat{b}}(z) + \frac{\partial^2}{\partial z^2} \underline{\hat{b}}(z)
\end{equation*}
which, for initial conditions $\underline{\hat{b}}(0), \underline{\hat{b}}'(0) = 0$, has a unique solution expressible in terms of the Airy functions
\begin{equation*}
\underline{\hat{b}}(z) = \frac{z}{2} + \frac{AiryBi(2^{1/3} z) 2^{2/3} 3^{1/3} \pi}{12 \Gamma(\frac{2}{3})} - \frac{AiryAi(2^{1/3} z) 2^{2/3} 3^{5/6} \pi}{12 \Gamma(\frac{2}{3})}
\end{equation*}
Using the following closed form of Taylor series for $AiryAi, AiryBi$ at $z=0$ 
\begin{equation*}
\frac{1}{3^{c} \pi} \sum_{n = 0}^{\infty} \frac{\Gamma\left(\frac{(n+1)}{3}\right)}{n!} (3^{1/3} z)^n \left\lvert \sin\left( \frac{2\pi(n+1)}{3} \right) \right\rvert
\end{equation*}
where $c = 2/3$ for $AiryAi$ and $c = 1/6$ for $AiryBi$, one obtains the following lower bound to $b_n$ for $n = 3k + 1$:
\begin{equation}
b_n \geq n![z^n] \underline{\hat{b}}(z) = \frac{6^{k + 1} \Gamma\left( k + \frac{2}{3} \right)}{12\Gamma\left(\frac{2}{3}\right)} = \omega(5^k k!)
\end{equation}

From this rough lower bound one is led to conjecture that bridgeless terms might make up a considerable percentage of all closed linear $\lambda$-terms (of which there are $O(6^k k!)$). This, coupled with \Cref{lemma:bijectionOneBridgeNoBridge}, gives us a first clue of what the limit distribution looks like: it must obey $\mathbb{P}[X = 0] = \mathbb{P}[X = 1]$ and for $k \geq 2$, $\mathbb{P}[X = k]$ seems to decay fast. These observations suggest that we are looking at a $Poisson(1)$ limit distribution for the number of bridges in $\closedTerms$. Indeed, the following subsection provides the tool which will help us prove this conjecture.

\subsection{Poisson distributions from differential equations}\label{subsec:poissonSchema}
\begin{lemma}[Poisson Schema]\label{lemma:poissonSchema} 
Let $F(z,u)$ be some bivariate powerseries. Furthermore, suppose that

\begin{itemize}

  \item The powerseries $F(z,u)$ satisfies a first order differential equation with respect to u, which may be rearranged as

  \begin{equation}
    \frac{\partial}{\partial u} F(z,u) = W_1(z, u, F(z,u))
  \end{equation}

  where $W_1$ is a rational function of $z, u, F(z,u)$.

  \item There exists a constant $\sigma \in \mathbb{R}^{>0}$ and a constant $b \subseteq {N}$\footnote{The periodicity condition here is not essential to the lemma at all, the same holds for powerseries with non-zero coefficients for all $n \geq 0$. However given the fact that in this section we shall deal with powerseries which have non-zero coefficients only for $n \equiv 2 \pmod{3}$, we include this condition for ease of use.}
  \begin{equation}
  [z^n] F(z,1) = 0 \qquad n \not\equiv b \pmod{a}
  \end{equation}
  \begin{equation}\label{eq:poissonRapidGrowth}
  \frac{[z^{n-b}] F(z,1)}{[z^n] F(z,1)} \sim O(n^{-\sigma}) \qquad n \equiv b \pmod{a}
  \end{equation}
\end{itemize} 

Define $W_N(z,u,f)$, for $N \in \mathbb{N}^{+}$, such that $W_N(z,u,F(z,u)) = \frac{\partial^N}{\partial u^N} F(z,u)$. Then if for all $N \geq 1$:

\begin{equation}\label{eq:poissonDiffByF}
    [z^n]  \left. \left(\partial_f W_{N+1} \cdot W_1 \right)\right\lvert_{f = F(z,1),~ u=1} \sim \left. [z^n] W_N \right\lvert_{f = F(z,1),~ u=1},
\end{equation}
\begin{equation}\label{eq:poissonDiffByV}
    [z^n]  \left. \left( \partial_v W_{N+1} \right) \right\lvert_{f = F(z,1),~ u=1} = O\left([z^{n-b}] F(z,1)\right),
\end{equation}
\begin{equation}\label{eq:poissonBootstrap}
  [z^n] \left. W_1 \right\lvert_{f = F(z,1),~ u=1} \sim [z^n] F(z,1).
\end{equation}

Then the random variables $X_n$ whose probability generating function is given by

\begin{equation}
p_n(u) = \frac{[z^n] F(z,u)}{[z^n] F(z,1)}
\end{equation}

converge in distribution to a $Poisson(1)$-distributed random variable $X$.

\end{lemma}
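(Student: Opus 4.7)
My plan is to use the method of factorial moments. Since the Poisson$(1)$ distribution has all its factorial moments equal to $1$ and is uniquely determined by its moments (e.g., by Carleman's criterion applied to the entire moment generating function $e^{e^t-1}$), it suffices to show that for each fixed $N \geq 1$, the $N$-th factorial moment of $X_n$ converges to $1$ as $n \to \infty$.

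Translating into generating-function language, the $N$-th factorial moment of $X_n$ equals $p_n^{(N)}(1)$, and using the defining identity $W_N(z,u,F(z,u)) = \partial_u^N F(z,u)$ evaluated at $u=1$, this becomes $p_n^{(N)}(1) = \frac{[z^n]\, W_N\lvert_{u=1,\,f=F(z,1)}}{[z^n]\, F(z,1)}$. Thus the whole problem reduces to proving $[z^n]\, W_N\lvert_{u=1,\,f=F(z,1)} \sim [z^n]\, F(z,1)$ for every $N \geq 1$.

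I would then proceed by induction on $N$. The base case $N=1$ is precisely the bootstrap condition \eqref{eq:poissonBootstrap}. For the inductive step, I would invoke the chain-rule identity $W_{N+1}(z,u,f) = \partial_u W_N(z,u,f) + W_1(z,u,f)\cdot \partial_f W_N(z,u,f)$, which follows from differentiating $W_N(z,u,F(z,u)) = \partial_u^N F(z,u)$ in $u$ and using $\partial_u F = W_1$. Evaluating at $u=1,\, f=F(z,1)$ and extracting $[z^n]$, the first summand is controlled by \eqref{eq:poissonDiffByV}, giving $O([z^{n-b}]\, F)$, which by the rapid-growth hypothesis \eqref{eq:poissonRapidGrowth} is $o([z^n]\, F)$ and hence negligible. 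The second summand is controlled by \eqref{eq:poissonDiffByF}, giving $(1+o(1))\,[z^n]\, W_{N-1}$, which by the inductive hypothesis is asymptotic to $[z^n]\, F(z,1)$. Summing the two contributions yields $[z^n]\, W_{N+1} \sim [z^n]\, F(z,1)$, closing the induction.

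The main subtlety I anticipate concerns indexing: conditions \eqref{eq:poissonDiffByF}--\eqref{eq:poissonDiffByV} as written for $N \geq 1$ naturally produce a recursion that jumps by two in the form $W_{N-1} \to W_{N+1}$, so starting from the base case $W_1$ they directly cover only the odd indices. To handle $W_2, W_4, \dots$ I would either verify $[z^n]\, W_2 \sim [z^n]\, F$ separately by exploiting the explicit rational form of $W_1$ (so that $\partial_u W_1$ and $\partial_f W_1$, and hence $W_2 = \partial_u W_1 + W_1 \partial_f W_1$, can be analyzed directly), or adopt the convention $W_0 = F$ and read \eqref{eq:poissonDiffByF}--\eqref{eq:poissonDiffByV} as also valid for $N=0$; either route supplies the missing base case and the induction then covers all $N \geq 1$, completing the proof once one observes that convergence of all factorial moments to those of Poisson$(1)$ forces weak convergence of $X_n$ to Poisson$(1)$.
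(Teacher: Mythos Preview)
Your approach is essentially identical to the paper's: the paper likewise applies the chain rule $W_N = \partial_f W_{N-1}\cdot W_1 + \partial_u W_{N-1}$, uses the hypotheses to chain $[z^n]W_N \sim [z^n]W_{N-1} \sim \cdots \sim [z^n]W_1 \sim [z^n]F(z,1)$, and then concludes via the method of (factorial) moments and the moment convergence theorem. Your observation about the index shift is well taken---the paper's own proof silently reads \eqref{eq:poissonDiffByF}--\eqref{eq:poissonDiffByV} as though the same index appeared on both sides (yielding a step of one rather than two), so the discrepancy you flagged is an infelicity in the statement rather than a genuine gap, and either of your proposed fixes is adequate.
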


\begin{proof}
We have, by the chain rule, that
\begin{equation}
W_N = \frac{\partial}{\partial f}W_{N-1} ~W_1 + \frac{\partial}{\partial v} W_{N-1}.
\end{equation}
Evaluating the above at $u=1, f = F(z,1)$ and extracting coefficients we have, by \Cref{eq:poissonDiffByF,eq:poissonDiffByV},
\begin{equation}
[z^n] W_N(z,1,F(z,1)) \sim [z^n] W_{N-1}(z,1,F(z,1)) + O([z^{n-b}] F(z,1)) \qquad n \equiv b \pmod{a}
\end{equation}
By \Cref{eq:poissonBootstrap} we have that $[z^n] W_N(z,1,F(z,1))$ grows asymptotically as $[z^n] W_1(z,1,F(z,1)) \sim [z^n] F(z,1)$ and by \Cref{eq:poissonRapidGrowth} we have $O([z^{n-b}] F(z,1)) = o([z^n] F(z,1))$. Therefore we have the following chain of asymptotic equivalences:
\begin{equation*}
[z^n] W_N(z,1,F(z,1)) \sim  [z^n] W_{N-1}(z,1,F(z,1)) \sim \dots \sim [z^n]W_1(z,1,F(z,1)) \sim [z^n]F(z,1) \qquad n \equiv b \pmod{a}
\end{equation*}
which translates to the following chain of asymptotic equalities between the factorial moments of $X_n$
\begin{equation}
\frac{[z^n] W_N(z,1,F(z,1))}{[z^n] F(z,1)} \sim \frac{[z^n] W_{N-1}(z,1,F(z,1))}{[z^n] F(z,1)} \sim \dots \sim \frac{[z^n] W_{1}(z,1,F(z,1))}{[z^n] F(z,1)} \sim \frac{[z^n] F(z,1)}{[z^n] F(z,1)} = 1  \quad n \equiv b \pmod{a}
\end{equation}
Using the following relation between $r$-th factorial and power moments of a random variable:
\begin{equation}
\mathbb{E}(X_{n}^r) = \sum_{k=0}^{r} \mathbb{E}\left({X_{n}^{\underline{r}}}\right) {r \brace l}
\end{equation}
where $X_{n}^{\underline{r}} = X_{n}(X_{n}-1)\dots(X_{n}-r+1)$, we have that $ \lim_{n \rightarrow \infty} \mathbb{E}(X_{n}^r) = \mathbb{E}(X^r)$. Since the moment generating function $\mathbb{E}(e^{sX})$ exists in a neighbour of 0, we have that the $Poisson(1)$ distribution is determined by its moments (see \cite[Theorem 30.1]{billingsley2008probability}) and so, by the Markov-Fréchet-Shohat moment convergence theorem (see \cite[Theorem 30.2]{billingsley2008probability},\cite[Theorem C.2]{flajolet2009analytic}), we obtain our desired result. %
\end{proof}

\subsection{Identity subterms of closed linear terms and loops in trivalent maps.}

The goal of this subsection is to investigate the limit distribution of the number of identity-subterms, that is subterms equivalent to $\lambda x.x$, in closed linear $\lambda$-terms or equivalently the number of loops in trivalent maps. 

\begin{figure}
  \centering
  \includegraphics[scale=0.7]{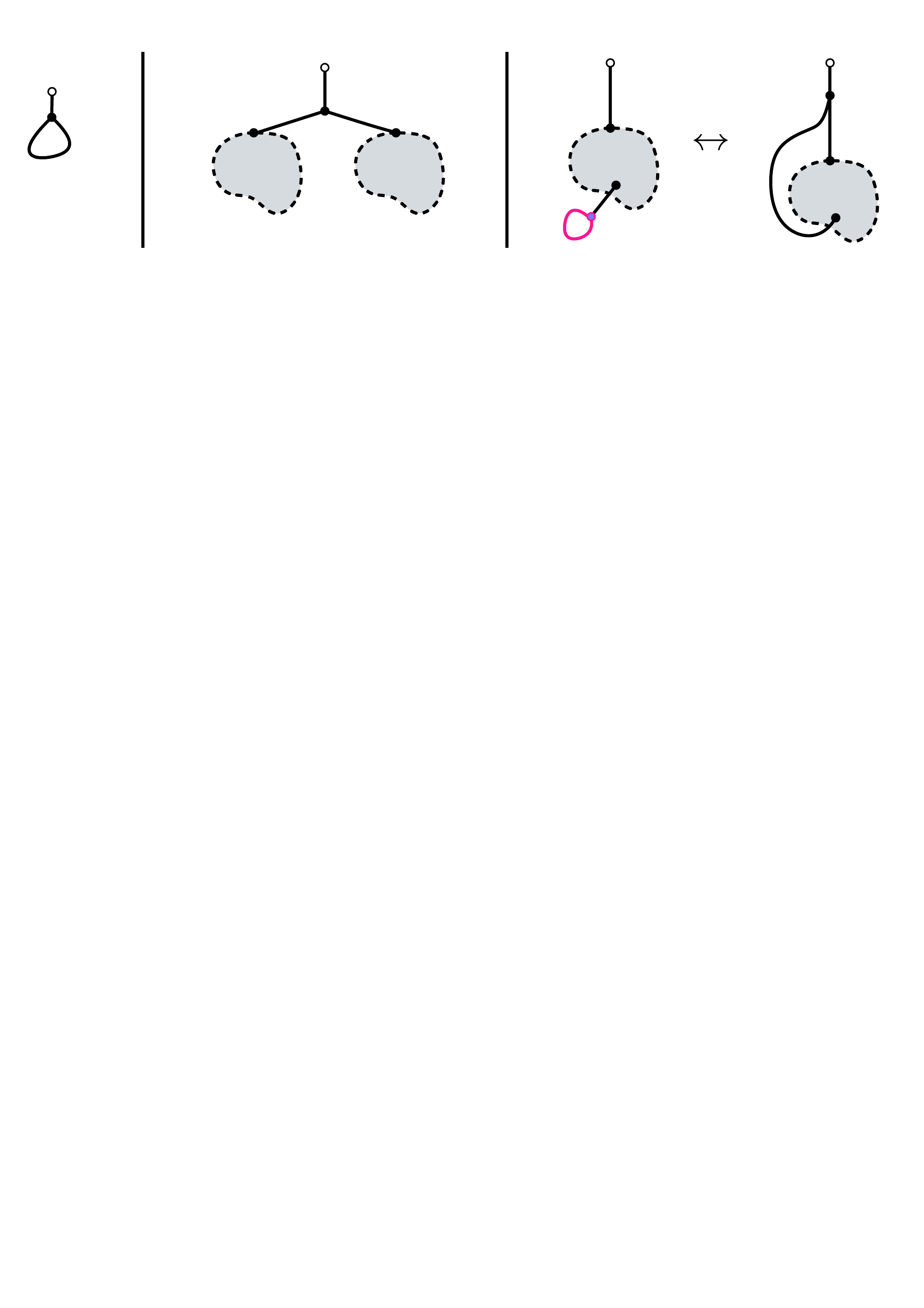}
  \caption{The three main cases of \Cref{eq:id_distr_u_diff_eq}: $z^2$ (left), $\lamClosedIdOGF(z,u)^2$ (middle), and $\frac{\partial}{\partial u} \lamClosedIdOGF(z,u)$ (right), together with a transformation of the last into an abstraction term.}
  \label{fig:loopEq}
\end{figure}

We begin by presenting a specification of the bivariate generating function $G(z,u)$ of closed linear $\lambda$-terms where $u$ tags identity subterms.

\begin{lemma}
Let $\lamClosedIdOGF(z,u)$ be the bivariate generating function enumerating closed linear $\lambda$-terms with respect to size and number of identity-subterms. Then,
\begin{equation}\label{eq:id_distr_u_diff_eq}
\lamClosedIdOGF(z,u) = (u-1)z^2 + z\lamClosedIdOGF(z,u)^2 + \frac{\partial}{\partial u} \lamClosedIdOGF(z,u)
\end{equation}
\end{lemma}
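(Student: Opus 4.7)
The plan is to split a closed linear term by its root constructor. By linearity and closedness, an application $(t_1~t_2)$ must have $t_1$ and $t_2$ both closed linear, so applications contribute exactly $z\lamClosedIdOGF(z,u)^2$. An abstraction $\lambda y.s$ is either the identity term $\lambda x.x$ (when $s = y$, contributing $uz^2$) or a non-identity abstraction (when $s \ne y$). The task therefore reduces to expressing the generating function of non-identity closed abstractions in terms of $\frac{\partial}{\partial u}\lamClosedIdOGF$.

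The key step is to establish a bijection
\begin{equation*}
\lambda y.s \quad\longleftrightarrow\quad \bigl(c[\lambda y.y],\ \lambda y.y\bigr)
\end{equation*}
between non-identity closed abstractions and pairs consisting of a closed linear term together with a marked \emph{proper} identity-subterm. Given $\lambda y.s$ with $s \ne y$, the unique occurrence of $y$ in $s$ determines a unique simple closed one-hole context $c$ with $c[y] = s$, and $c \ne \hole$ precisely because $s \ne y$; we send $\lambda y.s$ to $(c[\lambda y.y], \lambda y.y)$. Conversely, any closed linear term with a marked proper identity-subterm factors uniquely as $c[\lambda y.y]$ for some closed context $c \ne \hole$, which we send to $\lambda y.c[y]$. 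These maps are mutually inverse, preserve size since $|c[\lambda y.y]| = |c|+2 = |\lambda y.c[y]|$, and preserve the count of \emph{other} identity-subterms: the identity-subterms of $\lambda y.c[y]$ correspond bijectively to the identity-subterms of $c$ disjoint from its hole, which in turn are exactly the identity-subterms of $c[\lambda y.y]$ other than the marked one. In the map picture, this is the ``loop-sliding'' transformation hinted at in \Cref{fig:loopEq}.

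Assembling the pieces, the pairs enumerated by $\frac{\partial}{\partial u}\lamClosedIdOGF(z,u)$ include one degenerate case in which the marked identity-subterm equals the whole term, namely $(\lambda x.x,\lambda x.x)$, contributing $z^2$ (with $u^0$, since no further identity-subterms remain). Subtracting this, the non-identity closed abstractions are enumerated by $\frac{\partial}{\partial u}\lamClosedIdOGF(z,u) - z^2$, and summing the three cases yields
\begin{equation*}
\lamClosedIdOGF(z,u) = z\lamClosedIdOGF(z,u)^2 + uz^2 + \Bigl(\frac{\partial}{\partial u}\lamClosedIdOGF(z,u) - z^2\Bigr) = (u-1)z^2 + z\lamClosedIdOGF(z,u)^2 + \frac{\partial}{\partial u}\lamClosedIdOGF(z,u),
\end{equation*}
as claimed. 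The main technical point is verifying that the sliding bijection preserves the identity-subterm marking on both sides; once that is done, the remaining derivation is just routine root-decomposition bookkeeping together with the $-z^2$ correction for the degenerate self-marked identity.
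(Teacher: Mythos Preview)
Your proof is correct and takes essentially the same approach as the paper: both use the ``loop-sliding'' bijection $\lambda y.c[y] \leftrightarrow c[\lambda y.y]$ between abstractions and closed terms with a marked identity-subterm. The only cosmetic difference is bookkeeping of the edge case: you restrict the bijection to non-identity abstractions versus marked \emph{proper} identity-subterms and subtract the degenerate $z^2$ on the right, whereas the paper keeps the full bijection (all abstractions versus all marked identity-subterms) and moves the stray $z^2$ to the left-hand side, writing $\lamClosedIdOGF(z,u)+z^2 = uz^2 + z\lamClosedIdOGF(z,u)^2 + \partial_u\lamClosedIdOGF(z,u)$.
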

\begin{proof}
Consider the following rearrangement of the above equation:
\begin{equation}\label{eq:id_distr_u_diff_eq_combi}
    \lamClosedIdOGF(z,u) + z^2 =  uz^2 + z\lamClosedIdOGF(z,u)^2 + \frac{\partial}{\partial u} \lamClosedIdOGF(z,u)
\end{equation}
which may be intepreted combinatorially in the following fashion: a closed linear $\lambda$-term is either a term of the form $\lambda x.x$, or an application of two closed terms, or is formed by taking some closed linear $\lambda$-term $t$, selecting some identity-subterm $s$, and replacing it with a free variable which is then bound by a newly-introduced lambda on top, to form the term $\lambda a.t[s := a]$, as on the right side of \Cref{fig:loopEq}.

Two subtler points of the abstraction case of the above construction are worth discussing. Firstly, note the use of the plain differential operator as opposed to the more usual pointing ($z \frac{\partial}{\partial u}$) operator. This is due to the fact that identity-terms are of size $2$ which, after being pointed-at and replaced by a free variable, leads to a reduction of the size of the term by $1$. This is balanced by the introduction of the new abstraction which, having size $1$, causes the overall size to remain invariant. Secondly, we have the following ``edge case'': applying the abstraction construction to the term $\lambda x.x$ leaves it invariant but removes its $u$ mark. Such a term doesn't belong in $\lamClosedIdOGF(z,u)$, so we have to consider it separately, which is why the left-hand side of \Cref{eq:id_distr_u_diff_eq_combi} is $\lamClosedIdOGF(z,u) + z^2$ instead of just $\lamClosedIdOGF(z,u)$.

It is also of interest to note that this construction is highly reminiscent of the construction of the generating function enumerating open linear $\lambda$-terms with $u$ tagging free variables presented in \Cref{eq:diff_open_linlams}, with the term $uz^2$ enumerating the identity-term $\lambda x.x$ in \Cref{eq:id_distr_u_diff_eq} playing a role analogous to the term $uz$ enumerating the term $x$ in \Cref{eq:diff_open_linlams}.

\end{proof}

Before we proceed with the main result of this subsection, let us first address a technicality regarding the coefficient asymptotics of a power series which exhibit periodicities of the form

\begin{equation*}
F(z) = \sum\limits_{n = 0}^{\infty} f_{an + b} ~ z^{an + b},
\end{equation*}
with $f_n = 0$ for $n \not\equiv b \mod a$, $a \in \mathbb{N}^+, b \in \mathbb{N}$. For such a power series, expressions of the form $[z^n] F(z) \sim f(n),$ where $f(n)$ is some function of $n$, are not well defined since they represent divergent limits. To correct for this periodic appearance of zeros in $[z^n] F(z)$, one can manipulate the powerseries so as to ``skip'' the problematic values of $n$: 
\begin{equation}\label{eq:periodicSimFixed}
[z^n] \frac{ F(z^{1/a})}{z^{b/a}} \sim f(n).
\end{equation}

To avoid having to use cumbersome case-based notation and/or manipulation of powerseries, we will instead follow \cite{flajolet2009analytic} and write 

\begin{equation}
[z^n] F(z) \sim f(n),\qquad n \equiv b \pmod{a}
\end{equation}
to mean that the limit is taken for the subsequence with $n = ak + b, k \in \mathbb{N}$, and is zero otherwise.

In the specific case of the generating functions of closed linear $\lambda$-terms, $\lamClosedOGF(z)$, we have that: 
\begin{theorem}[{\cite[Theorem 3.3]{bodini2013asymptotics}}]\label{thm:closedTermAsympts}
The number of closed linear $\lambda$-terms of size $p \equiv 2 \pmod{3}$ is

\begin{equation}
[z^p] \lamClosedOGF(z) \sim \frac{3}{\pi} ~ 6^n n! ,\qquad p = 3n + 2.
\end{equation}
\end{theorem}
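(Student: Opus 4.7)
The plan is to obtain the sharp asymptotic $\lamClosedOGF_{3n+2} \sim \tfrac{3}{\pi}\, 6^n n!$ by specialising the ODE \eqref{eq:diff_open_linlams} at $u=0$ and then applying a Borel transform, in direct analogy with the treatment of the bridgeless subclass just completed. Setting $u=0$ in \eqref{eq:diff_open_linlams} yields $\lamClosedOGF(z) = z\lamClosedOGF(z)^2 + z T_1(z)$ where $T_1(z) := \left.\partial_u \lamOpenOGF\right|_{u=0}$, and differentiating \eqref{eq:diff_open_linlams} repeatedly in $u$ before setting $u=0$ produces a coupled chain of equations linking the functions $T_k(z) := \left.\partial_u^k \lamOpenOGF\right|_{u=0}/k!$. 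Exploiting the arity-support constraint $[z^n] T_k(z) = 0$ unless $n \equiv 2-k \pmod 3$, this chain can be truncated to a finite-order relation involving only $\lamClosedOGF$ and finitely many of its $z$-derivatives.

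Next, I would pass to the Borel transform $\hat\lamClosedOGF(z) := \sum_n \lamClosedOGF_n z^n / n!$, which converts multiplication by $z$ into integration so that the reduced relation rearranges, after a suitable further differentiation, into an analytic ODE of Airy type that mirrors the one satisfied by $\underline{\hat b}(z)$ in the bridgeless warm-up. Fixing the initial data by matching the low-order coefficients $\lamClosedOGF_2, \lamClosedOGF_5, \ldots$ of $\lamClosedOGF$ (which are easy to compute by hand from the ODE) then gives an explicit closed form for $\hat\lamClosedOGF(z)$ as a linear combination of $\mathrm{Ai}$ and $\mathrm{Bi}$ and a polynomial correction.

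Finally, expanding this closed form via the same Taylor series for the Airy functions used in the bridgeless analysis, and inverting the Borel transform through $\lamClosedOGF_n = n!\, [z^n] \hat\lamClosedOGF(z)$ followed by Stirling, would produce the claimed asymptotic. The constant $\tfrac{3}{\pi}$ should emerge cleanly from the Gamma-function ratio $\Gamma(k + 2/3)/\Gamma(2/3)$ that appears in the Airy expansion at $0$, combined with the $6^n n!$ scaling coming from Stirling applied to $(3n+2)!$. The main obstacle is the first step: where in the bridgeless case a single substitution sufficed to decouple a univariate ODE, here several derivatives $T_k$ must be eliminated simultaneously using the arity-support constraints, and one must verify that the resulting reduced equation for $\lamClosedOGF$ is of genuine Airy type with precisely the coefficients that yield the constant $\tfrac{3}{\pi}$ and not some nearby incorrect value.
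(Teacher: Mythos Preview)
The paper does not prove this statement: it is quoted from \cite{bodini2013asymptotics}, where the argument proceeds via the Hadamard-product representation that the present paper itself exploits in \Cref{subsec:freeVars}. Specialising \eqref{eq:hadamardspec} at $u=0$ expresses $\lamClosedOGF$ in terms of $\exp(h^2/2)\odot\exp(h^3/3)$, and the asymptotics then follow from a saddle-point analysis of each factor, exactly as in \Cref{expzthree} and \Cref{involutions}; no Borel transform or Airy representation enters.

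Your first step contains a real gap. The arity-support constraint $[z^n]T_k = 0$ unless $n\equiv 2-k\pmod 3$ does \emph{not} truncate the chain
\[
T_0 = zT_0^2 + zT_1,\qquad T_1 = z + 2zT_0T_1 + zT_2,\qquad T_2 = 2zT_1^2 + 2zT_0T_2 + zT_3,\quad\ldots
\]
to a closed relation in $T_0$ and its $z$-derivatives. The support information only tells you on which residue class mod $3$ each $T_k$ lives; it supplies no algebraic identity expressing any $T_k$ in terms of lower $T_j$ or in terms of $\partial_z^j T_0$ (which all share the support of $T_0$). The analogy with the bridgeless warm-up breaks down precisely here: there the self-contained $z$-ODE \eqref{eq:bridgelessOGF1} came from the bespoke combinatorial decomposition \eqref{eq:oneVarOpenBridgelessDecomp}, not from specialising a $u$-equation.

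A self-contained equation for $\lamClosedOGF$ \emph{does} exist, namely the Riccati relation $6w^2L' = L-1-4wL-wL^2$ for $L(w)=\sum_{n\ge 0} a_n w^n$ with $a_n=[z^{3n+2}]\lamClosedOGF$, equivalent to the recurrence $a_{n+1}=(6n+4)a_n+\sum_{k=0}^n a_k a_{n-k}$. But deriving it requires a separate root-edge argument on trivalent maps rather than your $u$-chain, and carrying the Borel/Airy programme through from this Riccati equation needs a preliminary linearisation step you have not described. The Hadamard/saddle-point route is both shorter and the one actually used in the cited source.
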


\begin{figure}[h]
  \centering
  \includegraphics[scale=0.7]{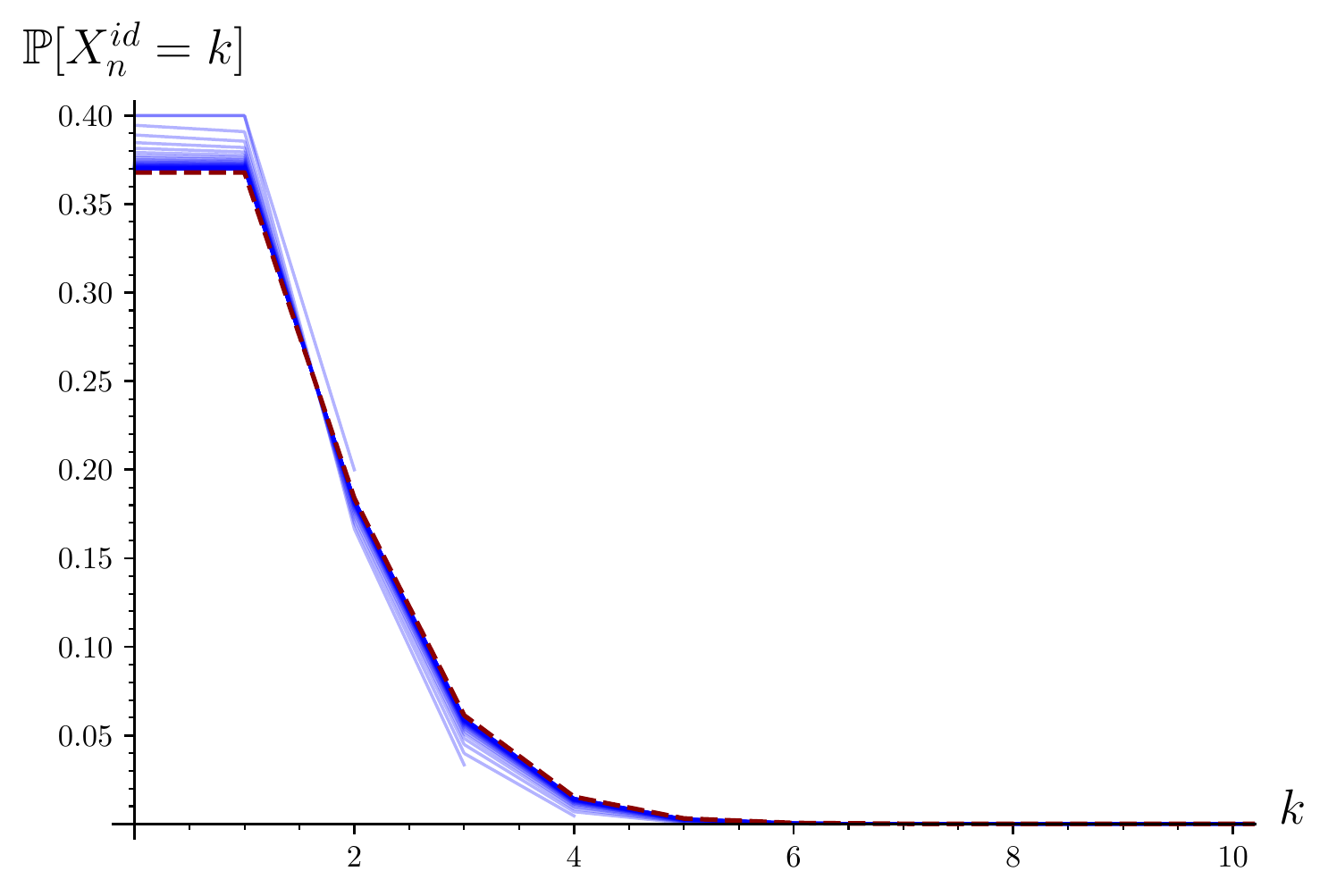}
  \caption{Density plots of $X^{id}_n$ for $n = 2 \dots 100$ along with $Poisson(1)$ in red.}
  \label{fig:distPlotId}
\end{figure}

We now proceed with the main result of this section.
\begin{theorem}
Let $\chi_{id}$ be the parameter corresponding to the number of identity-subterms in a closed linear $\lambda$-term or, equivalently, to loops in rooted trivalent maps. Then for the random variables $X^{id}_{n}$ corresponding to $\chi_{id}$ taken over $\closedTerms[n]$ respectively we have:

\begin{equation}
X^{id}_{n} \overset{D}{\rightarrow} Poisson(1)
\end{equation}
\end{theorem}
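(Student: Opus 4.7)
The plan is to apply the Poisson Schema (\Cref{lemma:poissonSchema}) to $F(z,u) = \lamClosedIdOGF(z,u)$. First, rearranging \Cref{eq:id_distr_u_diff_eq} to isolate the derivative yields
\[
  \frac{\partial}{\partial u} \lamClosedIdOGF(z,u) = \lamClosedIdOGF(z,u) - (u-1)z^2 - z\lamClosedIdOGF(z,u)^2,
\]
so the relevant rational function is $W_1(z,u,f) = f - (u-1)z^2 - zf^2$. The periodicity $[z^n]\lamClosedIdOGF(z,1) = 0$ for $n \not\equiv 2 \pmod 3$ and the rapid-growth ratio $T_{n-3}/T_n \sim \frac{1}{6k} = O(n^{-1})$ both follow from \Cref{thm:closedTermAsympts}, giving parameters $a = 3$, $b = 2$, $\sigma = 1$.

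For the bootstrap \Cref{eq:poissonBootstrap}, I would compute $W_1|_{u=1,\,f=T(z)} = T(z) - zT(z)^2$ and apply \Cref{lemma:tupletsAsymptShifted} (with $m = 2$, $l = 2$) to obtain $[z^n] zT(z)^2 = O(n^{-1}) [z^n] T(z)$, so that $[z^n] W_1|_{u=1,\,f=T(z)} \sim [z^n] T(z)$.

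The heart of the proof is verifying \Cref{eq:poissonDiffByF,eq:poissonDiffByV} by induction on $N$ using the chain rule $W_{N+1} = \partial_f W_N \cdot W_1 + \partial_u W_N$. The key structural claim I would maintain is that $W_N|_{u=1}$, viewed as a polynomial in $f$ with coefficients in $\mathbb{Z}[z]$, takes the form $f + R_N(z,f)$, where every monomial $c z^i f^j$ of $R_N$ has the property that $[z^n](c z^i T(z)^j) = O(n^{-1}) T_n$. This is trivial when $j = 0$ (pure polynomial in $z$), follows from the ratio condition when $j = 1$ and $i \geq 3$, and follows from \Cref{lemma:tupletsAsymptShifted} when $j \geq 2$ because the parity of the support of $T(z)^j$ forces $i + 2(j-1) \geq 3$. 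As sanity checks one can compute directly $W_1|_{u=1} = f - zf^2$, $W_2|_{u=1} = f - 3zf^2 + 2z^2 f^3 - z^2$, and $W_3|_{u=1} = f - 7zf^2 + 12z^2 f^3 - 6z^3 f^4 + 2z^3 f - z^2$, all of the claimed shape.

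The main obstacle will be the bookkeeping of the inductive step, where I have to show that $\partial_f W_N \cdot W_1 + \partial_u W_N$ preserves the structural shape and in particular that no ``dangerous'' $f$-linear monomial of $z$-degree less than $3$, nor $f^2$-monomial of $z$-degree less than $1$, can appear. The differentiation $\partial_f$ lowers the $f$-degree of each monomial by one while keeping its $z$-degree fixed, but the subsequent multiplication by $W_1 = f - zf^2 - (u-1)z^2$ immediately raises the $f$-degree back (by $1$ or $2$) while adding $0$ or $1$ to the $z$-degree, so a careful count shows that whenever the $f$-degree lands on $1$ the accumulated $z$-degree is at least $3$; the $\partial_u W_N$ contribution is handled analogously, using the fact that each $\partial_u$ ultimately attaches a factor of $-z^2$ coming from $\partial_u W_1 = -z^2$, which also gives \Cref{eq:poissonDiffByV} directly. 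Once this structural invariant is in hand, \Cref{eq:poissonDiffByF} follows because both sides are asymptotic to $T_n$, and \Cref{lemma:poissonSchema} yields $X^{id}_n \convergesInDist Poisson(1)$.
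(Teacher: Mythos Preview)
Your proposal is correct and follows essentially the same route as the paper: verify the hypotheses of the Poisson Schema by maintaining inductively that $W_N = f + R$ with $R$ asymptotically negligible after the substitution $f = \lamClosedOGF(z)$.  The paper keeps the bookkeeping lighter by tracking the full polynomial $W_N(z,u,f)$ (rather than $W_N|_{u=1}$) with the single invariant that every monomial of $R$ carries at least one factor of $z$; this is trivially preserved by the recursion $W_{N+1} = (1+\partial_f R)\,W_1 + \partial_u R$ and spares you both the separate handling of the $\partial_u$-contribution and the case split on the $f$-degree.
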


\begin{proof}
Let $W_N(z,u,f)$ be such that $W_N(z,u,\lamClosedIdOGF) = \frac{\partial^N}{\partial u^N} \lamClosedIdOGF(z,u)$. For $N=1$ we have, by \Cref{eq:id_distr_u_diff_eq},
\begin{equation*}
W_1 = f - zf^2 - (u-1)z^2.
\end{equation*}
By induction, will show that $W_N = \partial_f W_{N-1} W_1 + \partial_u W_{N-1}$ consists of terms of the form $f + R$ where $R$ is a finite sum of monomials of the form $c_i f^{j} u^{k} z^{l}$ where $c_i$ is a constant, $j,k \geq 0$, and  $l \geq 1$.

Indeed, $W_1$ is of this form and for every $N \geq 2$, if $W_{N-1}$ is of this form, we have 

\begin{align*}
\frac{\partial}{\partial f} W_{N-1} W_1 + \frac{\partial}{\partial u} W_{N-1} = \left(1 + \frac{\partial}{\partial f} R \right) \cdot W_{1} + \frac{\partial}{\partial u} R.
\end{align*} 

But a term-by-term differentiation of $R$ with respect to either $f$ or $u$ maintains all desired properties of $R$ and so finally, by grouping together the monomials of $\partial_f R$ and $\partial_u R$ as $R'$ and noticing that $W_1$ contributes the sole $f$ summand of $W_{N}$ we see that $W_{N} = f + R'$ with $R'$ as desired.

An application of \Cref{lemma:tupletsAsymptShifted} shows that $[z^n] z^k \lamClosedIdOGF(z,1)^l$ for $k \geq 1, l \geq 0$ is asymptotically neglibible compared to $[z^n] \lamClosedIdOGF(z,1)$, for $n \equiv 2\pmod{3}$. Since the $R$ summand of $W_{n-1} \lvert_{u=1}$ consists precisely of a finite number of such terms we have $\partial_f R = O\left([z^{n-3}] \lamClosedIdOGF(z,1)\right)$ and $\partial_u R = O\left([z^{n-3}] \lamClosedIdOGF(z,1)\right)$, again for $n \equiv 2\pmod{3}$. %

Finally, we have that 
\begin{align*}
[z^n]  \left. \frac{\partial}{\partial f} W_N \cdot W_1 \right\lvert_{f = \lamClosedIdOGF(z,1),~ u=1} &\sim [z^n] \left. W_N\right\lvert_{f = \lamClosedIdOGF(z,1),~ u=1}\\
\left. [z^n] \frac{\partial}{\partial v} W_N \right\lvert_{f = \lamClosedIdOGF(z,1), v=1} &= O\left(g_{n-3}\right) ,\quad\qquad n \equiv 2\pmod{3} \\
[z^n]  \left. W_1 \right\lvert_{f = \lamClosedIdOGF(z,1),~ u=1} &\sim [z^n] \lamClosedIdOGF(z,1) ,\qquad n \equiv 2\pmod{3}
\end{align*}
so by \Cref{lemma:poissonSchema} we obtain the desired result.

\end{proof}

\subsection{Closed proper subterms of closed linear $\lambda$-terms and internal bridges in trivalent maps}

Identity-subterms are the simplest case of a more general notion, that of closed proper subterms. Equivalently, they are the smallest possible rooted trivalent map which can appear at one side of a bridge. In this subsection we are going to generalise the result of the previous subsection by investigating the limit distribution of closed proper subterms of linear $\lambda$-terms and of internal bridges in rooted trivalent maps. As in the previous subsection, we will rely on a specification for the bivariate generating function $\lamClosedSubOGF(z,v)$ of closed linear lambda terms where $v$ tags closed proper subterms.

Before presenting said specification, we begin by defining the following classes which will provide the building blocks for our decomposition of $\closedTerms$.

\begin{definition}[The class $\contextClass$ of simple closed non-trivial one-hole contexts]
Let $\contextClass$ be the combinatorial class consisting of simple closed one-hole contexts other than $\hole$. In terms of maps, elements of $\contextClass$ correspond to \emph{doubly-rooted trivalent maps,} which have the following structure:

\begin{itemize}
   \item There is a distinguished vertex $r$ of degree 1, called the \emph{root vertex} (which as usual we'll draw as a white vertex with black border).
   \item There is a distinguished vertex $v$ of degree 1, called the \emph{box vertex} (which as at the end of \Cref{subsec:lambda} we'll draw as a gray vertex with black border).
   \item All other vertices have degree 3, and there is at least one such vertex.
\end{itemize} 
Elements of $\contextClass$ will be enumerated by their size as contexts, which equals the number of edges in the corresponding map \textbf{minus $1$}. 
\end{definition}

\begin{definition}[The class $\qClass$]
Let $\qClass \subseteq \contextClass$ be the combinatorial class formed by restricting $\contextClass$ to one-hole contexts of which every proper right subcontext is either $\hole$ or has a free variable.
Viewed as maps, the elements $q \in \qClass$ have the additional property that:

\begin{itemize}
   \item No edge belonging to the $\tau(q)$-path from $r$ to $v$ is an internal bridge, where $r$ is the root vertex, $v$ is the distinguished 1-valent vertex, and $\tau(q)$ is the one-hole context corresponding to $q$.
\end{itemize} 
\end{definition}

\begin{figure}[h]
  \centering
  \includegraphics[scale=1.7]{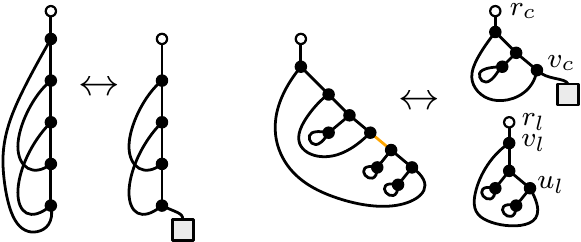}
  \caption{Maps corresponding to elements of $\closedTerms$ and their decompositions. For the subfigure on the left, we have a decomposition of $\lambda x.\lambda y. \lambda z. x z y$ into $\lambda y. \lambda z. \hole z y$, which is of the form $\singleton^2\qClass$. For the map on the right we have a decomposition of $\lambda x. \lambda y. x (\lambda a. a) (\lambda b. b) y (\lambda c. c)$ into $\lambda y. \hole y (\lambda c.c)$ and $\lambda x. x (\lambda a. b) (\lambda b.b)$ of the form $\qClass\closedAbsClass$. Vertices in the right subfigure are labelled as in the proof of \Cref{lemma:qdecompq} and the internal bridge $b$ is highlighted.}
  \label{fig:lqDecompFigure}
\end{figure}

Let $\closedAbsClass$ be the class consisting of {\em closed non-identity linear abstraction terms}; note that the map corresponding to an element of $\closedAbsClass$ is exactly a map of size bigger than 2 with the property that deleting the root and its unique neighbour leaves a connected map.

\begin{lemma}\label{lemma:qdecompq}
For the combinatorial classes $\closedAbsClass, \qClass$  we have
\begin{equation} \label{eq:ldecompq}
  \closedAbsClass = \singleton^2\qClass + \qClass\closedAbsClass
\end{equation}
At the level of generating functions we have
\begin{equation}\label{eq:ldecompqOGF}
\abstractionSubOGF(z,v) = z^2\qOGF(z,v) + \qOGF(z,v)\abstractionSubOGF(z,v)
\end{equation}
where $\abstractionSubOGF(z,v)$ is the generating function enumerating elements of $\closedAbsClass$ and $\qOGF(z,v)$ the one enumerating elements of $\qClass$ with $v$ in both cases tagging proper subterms of terms, or equivalently internal bridges in rooted trivalent maps.
\end{lemma}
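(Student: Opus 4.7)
My plan is to realize the decomposition as a bijection that peels off the outermost $\qClass$ context from each element of $\closedAbsClass$. First, observe that every $t \in \closedAbsClass$ can be written uniquely as $t = \lambda x.\plug{c}{x}$ for some simple closed one-hole context $c$: by definition $t$ is an abstraction $\lambda x.u$, and since $u$ has $x$ as its unique free variable by linearity, the position of the sole occurrence of $x$ in $u$ determines a unique simple closed context $c$ with $\plug{c}{x} = u$. The condition $t \neq \lambda x.x$ rules out $c = \hole$, so $c \in \contextClass$.

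The technical heart of the proof is the claim that every $c \in \contextClass$ decomposes uniquely as $c = q \circ c'$ with $q \in \qClass$ and $c' \in \{\hole\} \cup \contextClass$. To prove this, I would observe that the right subcontexts of $c$ are linearly ordered under composition, because they correspond to positions along the unique path from the root of $c$ down to its hole (the context-level analogue of \Cref{lemma:LOrdedSTerms}). Let $c'$ be the maximal proper right subcontext of $c$ that is either $\hole$ or non-trivial and closed, and let $q$ be the unique outer factor with $c = q \circ c'$. By maximality, every proper right subcontext of $q$ (which corresponds bijectively to a right subcontext of $c$ strictly containing $c'$) is either $\hole$ or has a free variable, so $q \in \qClass$. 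Uniqueness of the decomposition follows by the same maximality argument.

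With the decomposition established, I split into cases. If $c' = \hole$, then $c \in \qClass$ and $t \leftrightarrow (\epsilon_1,\epsilon_2,c) \in \singleton^2\qClass$, where the two $\singleton$ atoms track the outer $\lambda$ and the occurrence of $x$ that closes the hole; the size identity $|t| = |c| + 2$ matches the sizing convention for $\qClass$. If $c' \neq \hole$, then $c' \in \contextClass$ and $t \leftrightarrow (q, \lambda x.\plug{c'}{x}) \in \qClass \times \closedAbsClass$, where the second component genuinely lies in $\closedAbsClass$ precisely because $c' \neq \hole$ prevents it from being the identity. Conversely, given $(q, s) \in \qClass \times \closedAbsClass$ with $s = \lambda y.\plug{c''}{y}$, one reconstructs $t = \lambda y.\plug{(q \circ c'')}{y}$; the key lemma guarantees that the fresh decomposition of $q \circ c''$ returns exactly the pair $(q, c'')$. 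The generating function identity \eqref{eq:ldecompqOGF} then follows from the symbolic-method dictionary once one checks that $v$-marking (internal bridges, or equivalently closed proper subterms) is preserved additively, which is immediate since the closed proper subterms of $t$ are those of $q$ together with those of $\lambda x.\plug{c'}{x}$.

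The main obstacle I anticipate is the uniqueness step in the decomposition of $c$: imposing $q \in \qClass$ must force $c'$ to be the maximal proper right subcontext of $c$ that is either $\hole$ or non-trivial and closed. A related subtlety is to resist the tempting shortcut $\lambda x.\plug{q}{\plug{c'}{x}} = \plug{q}{\lambda x.\plug{c'}{x}}$, which fails syntactically whenever $q$ does not begin with an abstraction; the bijection must operate via the explicit outer-lambda form $\lambda y.\plug{(q \circ c'')}{y}$ rather than by plugging $s$ directly into $q$.
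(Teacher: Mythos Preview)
Your proposal is correct and follows essentially the same approach as the paper: write $t=\lambda x.\plug{c}{x}$ with $c\in\contextClass$, take $c'$ to be the maximal proper right subcontext of $c$ that is closed (or $\hole$ if none exists), and factor $c=q\circ c'$ with $q\in\qClass$, yielding the two summands according to whether $c'=\hole$. The paper organizes the same argument as an explicit case split (all proper right subcontexts of $c$ open vs.\ some non-trivial closed one) rather than a single ``peel off the outermost $\qClass$-factor'' lemma, and it supplements the lambda-term argument with the parallel map-theoretic description, but the content and the maximality-based uniqueness reasoning are the same.
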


\begin{proof}

We establish the desired bijection \Cref{eq:ldecompq} by providing two mappings $\psi, \psi^{-1}$ between the left-hand side and the right-hand side and vice-versa.
We then verify that this is indeed a bijection and that it leads to the equality generating functions presented in \Cref{eq:ldecompqOGF}.

{\em Direction $\psi: \closedAbsClass \rightarrow \singleton^2\qClass + \qClass\closedAbsClass$.} Let $l$ be some element of $\closedAbsClass$, viewed as a closed abstraction term and write $l = \lambda x.c[x]$ for some one-hole context $c$.
We now distinguish cases based on the nature of right subcontexts of $c$:

{\em Case 1.1: All proper right subcontexts of $c$ are either $\hole$ or have a free variable.} In this case $c$ is an element of $\qClass$ by definition. To account for the change of size resulting from the deletion of the outermost abstraction and its bound variable, a $\singleton^2$ factor is introduced, yielding the $\singleton^2\qClass$ summand of \Cref{eq:ldecompq}. Therefore we let $\psi(l) = (z^2, c)$, for $z^2 \in \singleton^2$.

In terms of maps, let $m$ be the map corresponding to $l$ and $r$ be the root, $a$ be the vertex representing the outermost abstraction, and $v$ be the vertex representing its bound variable. The map $m_c$ corresponding to the context $c$ is obtained from the map $(m \setminus av) / a$ by adding a new box vertex which is made adjacent to $v$. The current case then corresponds to the non-existence of an internal bridge along the $c$-path from the root of $m_c$ to its unique box vertex. As such, $m_c$ yields a unique member of $\qClass$.
 
{\em Case 1.2: There exists a proper closed right subcontext of $c$ other than $\hole$.} In this case let $c'$ be the biggest such proper right subcontext, with $c$ decomposing as $c = c_1\circ c'$ for some $c_1 \neq \hole$. 
Let $c''$ be an arbitrary proper right subcontext of $c_1$. Then $c_1 = c_0\circ c''$ for some context $c_0$ and $c = c_0\circ c'' \circ c'$. Notice then that any such $c''$ must either be $\hole$ or have a free variable, for otherwise $c''\circ c'$ would be a closed proper right subcontext of $c$ bigger than $c'$, violating maximality of of $c'$. Since $c''$ was an arbitrary proper right subcontext of $c_1$, we have that $c_1$ belongs to $\qClass$ by definition.

As for $c'$, since by linearity $x$ does not appear in $c'$, we have that $\lambda x. c'[x]$ belongs to $\closedAbsClass$.

Together, these yield the $\qClass\closedAbsClass$ summand of \Cref{eq:ldecompq} and so we let $\psi(l) = (c_1, \lambda x. c'[x])$.

In terms of maps, we have in this case that the map $m_c$ corresponding to $c$, constructed as detailed above, has at least one internal bridge in the $c$-path between the root and the box vertex. Let $b$ be the unique such bridge closest to the root (in terms of distances along the $c$-tree of $m_{c}$). Then the deletion of $b$ results in two connected components which yield $m_{q} = \tau^{-1}(q)$ and (after some manipulation) $m_{\lambda .c'[x]} = \tau^{-1}(\lambda .c'[x])$.

{\em Direction $\psi^{-1}: \singleton^2\qClass + \qClass\closedAbsClass \rightarrow \closedAbsClass$.} For the other direction, let $t \in \singleton^2\qClass + \qClass\closedAbsClass$. We distinguish the following two possibilities based on the nature of $t$.

{\em Case 2.1: $t \in \singleton^2\qClass$}. In this case, $t$ consists of an element of $\singleton^2$ together with a context $c \in \qClass$. Then, assuming without loss of generality that $x$ doesn't appear in $c$, $t' = \lambda x. c[x]$ together with its corresponding map belong in $\closedAbsClass$, with $\singleton^2$ accounting for the increase in size $\lvert t' \rvert = \lvert c \rvert + 2$, so that the preimage of $t$ is $\psi^{-1}(t) =(\bullet^2, \lambda x.c[x])$.

In terms of maps, let $r, v_{\hole}$ be the root and box vertices of $m_c = \tau^{-1}(c)$ respectively. Then this case amounts to identifying $r$ with $v_{\hole}$ and introducing a new root vertex $r'$ which we make adjacent to $r$.

{\em Case 2.2: $t \in \qClass\closedAbsClass$}. In this case, $t$ consists of a pair of $c \in \qClass$ and $l \in \closedAbsClass$. Suppose, without loss of generality, that $l = \lambda x. d[x]$. Then $\lambda x. c[d[x]]$ and its corresponding map belong in $\closedAbsClass$ and $\psi^{-1}(t) = \lambda x. c[d[x]]$.

In terms of maps, let $r_l$ be the root of $m_l = \tau^{-1}(l)$, let $v_l$ be the unique neighbour of $r_l$ and let $u_l$ be the neighbour of $v_l$ which is furthest from $v_l$ in the $l$-tree of $m$. Let also $r_c, v_{\hole}, v_c$ be the root, box vertex, and the unique neighbour the box vertex in $m_c = \tau^{-1}(c)$, respectively. Then by taking the disjoint union $(m_l \setminus r_l \setminus v_lu_l) + (m_c \setminus v_{\hole}) + r_{new}$, where $r_{new}$ is a new vertex, introducing two new edges $r_{new} r_c, r_c u_l,$ and identifying $v_c$ with $v_l$, we form a rooted trivalent map as desired. See the right subfigure of \Cref{fig:lqDecompFigure} for an example.

We now proceed to verify that $\psi^{-1}$ is a two-sided inverse of $\psi$. Let $l \in \closedAbsClass$ and write $l = \lambda x.c[x]$. We then have $\psi^{-1}(\psi(t)) = \psi^{-1}((\bullet^2, c)) = \lambda x.c[x]$ or $\psi^{-1}(\psi(t)) = \psi^{-1}((q, \lambda x. c'[x])) = \lambda x. q[c' [x]] = \lambda x. c[x]$ depending on the whether we fall under Case $1.1$ or $1.2$ respectively. Conversely, if $(\bullet^2, c) \in \singleton^2\qClass$, we have $\psi(\psi^{-1}((\bullet^2,c))) = \psi(\lambda x. c[x]) = (\bullet^2, c)$ since $c \in \qClass$ satisfies the properties of Case $1.1$. Lastly, if $(c, \lambda x. d[x]) \in \qClass\closedAbsClass$, $\psi(\psi^{-1}((c,\lambda x. d[x]))) = \psi(\lambda x. c[d[x]]) = (c, \lambda x. d[x])$ since $d$ by construction is closed and so we fall under Case $1.2$.

Finally, we note that in both directions of the above bijection, no new bridges/closed subterms are created, so that the number of bridges/closed subterms in the left-hand-side of \Cref{eq:ldecompq} equals that on the right, yielding \Cref{eq:ldecompqOGF}.
\end{proof}

\begin{figure}[h]
  \centering
  \includegraphics[scale=2]{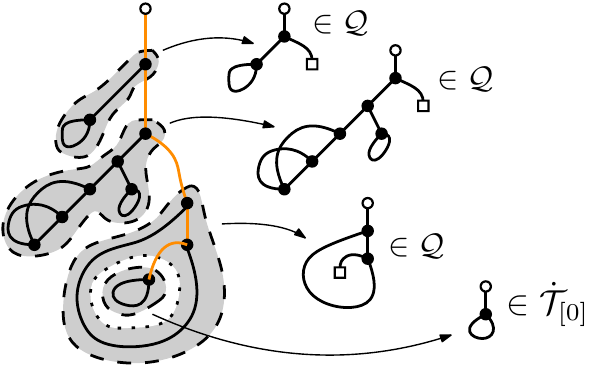}
  \caption{A map $m$ along with a highlighted path between the root and a bridge in $m$. 
  The decomposition of $m$ into components yielding maps in $\qClass$ and $\closedTerms$ is depicted by grey borders.}
  \label{fig:freeVarsEvolution}
\end{figure}

Now, in order to obtain the bivariate generating function for closed linear lambda terms by size and number of closed proper subterms, 
Let $\Theta_{sub}\closedAbsClass$ be the class of {\em closed linear $\lambda$-terms with a distinguished closed proper subterm} or, equivalently, {\em rooted trivalent maps with a distinguished internal bridge}.
The following proposition just recapitulates the fact such pointed objects may be decomposed in terms of the class of one-hole contexts/doubly-rooted maps $\contextClass$.

\begin{proposition}
For the combinatorial classes $\Theta_{sub}\closedTerms$, $\closedTerms$, and $\contextClass$ we have
\begin{equation}\label{eq:ThetaSub}
  \Theta_{sub}\closedTerms = \contextClass~ \closedTerms.
\end{equation}
At the level of generating functions, if $\contextOGF(z,v)$ is the generating function enumerating objects of $\contextClass$, with $v$ tagging internal bridges/closed proper subterms, then
\begin{equation}\label{eq:ThetaSubOGF}
v \frac{\partial}{\partial v} \lamClosedSubOGF(z,v) = \contextOGF(z,v)~ \lamClosedSubOGF(z,v).
\end{equation}
\end{proposition}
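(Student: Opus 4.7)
The plan is to give the natural bijection $\Theta_{sub}\closedTerms \simeq \contextClass \times \closedTerms$ and then read off the generating function identity via the standard pointing translation.

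First I would describe the bijection. An element of $\Theta_{sub}\closedTerms$ is by definition a pair $(t,u)$ where $t$ is a closed linear $\lambda$-term and $u$ is a distinguished occurrence of a closed proper subterm of $t$. Using the basic fact recalled in \Cref{subsec:lambda} that $u \psubtm t$ holds iff there is a unique one-hole context $c \ne \hole$ with $t = \plug{c}{u}$, and observing that if both $t$ and $u$ are closed then the intermediate $c$ must itself be simple and closed, each such $(t,u)$ determines a unique pair $(c,u) \in \contextClass \times \closedTerms$. Conversely, starting from any $(c,u) \in \contextClass \times \closedTerms$, plugging yields a closed term $\plug{c}{u}$ carrying $u$ as a closed proper subterm (properness being ensured by $c \ne \hole$). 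These two assignments are mutually inverse.

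Next I would verify that sizes and $v$-weights combine additively under this bijection. Size additivity $|\plug{c}{u}| = |c| + |u|$ is immediate from the inductive definitions in \Cref{subsec:lambda} together with the convention $|\hole| = 0$. For the $v$-weight, one checks that the closed proper subterms of $t = \plug{c}{u}$ split into (i) $u$ together with the closed proper subterms of $u$, contributing $\chi(u)+1$ in total, and (ii) those associated with the ambient context, namely closed subterms of $c$ at positions off the hole-path, together with proper right subcontexts $\hole \subsetneq c' \subsetneq c$ that are themselves simple closed, each giving rise to a closed subterm $\plug{c'}{u}$ of $t$. The weight convention defining $\contextOGF(z,v)$ — namely that $v$ tracks closed proper subterms of the context, equivalently internal bridges of the associated doubly-rooted map, with the hole position itself counted as one such contribution on the context side — is precisely what is needed to make the exponents of $v$ on the two sides of the bijection agree.

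Finally, the translation to generating functions is standard analytic combinatorics: the pointing construction $\Theta_{sub}$, which marks one object tagged by $v$, corresponds at the level of bivariate ordinary generating functions to the operator $v \partial_v$, while the cartesian product on the right corresponds to multiplication of generating functions. Combining these yields the claimed identity $v \frac{\partial}{\partial v}\lamClosedSubOGF(z,v) = \contextOGF(z,v)\,\lamClosedSubOGF(z,v)$. The only genuinely delicate point is the bookkeeping of the $v$-weight on the context side, where the hole must be counted as contributing one unit to match the $u$-factor absorbed by the pointing; once this convention is fixed, the rest of the argument is a routine unwinding of the definitions.
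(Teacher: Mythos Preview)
Your argument is correct and follows the same route as the paper: the canonical bijection sending a closed term with a marked closed proper subterm to the pair (surrounding one-hole context, subterm), followed by the standard dictionary translating pointing to $v\,\partial_v$ and cartesian product to multiplication of generating functions. You are in fact more explicit than the paper about the $v$-weight bookkeeping, rightly flagging that the convention for $\contextOGF(z,v)$ must absorb one extra unit of $v$ at the hole position (accounting for the distinguished subterm $u$ itself among the closed proper subterms of $t=\plug{c}{u}$) in order for the exponents on the two sides to match.
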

\begin{proof}
Let $t \in \Theta_{sub}\closedTerms$ be a closed linear $\lambda$-term with a distinguished closed proper subterm $u$. By definition, this means that $t = c[u]$ for some non-trivial one-hole context $c \in \contextClass$ and $u \in \closedTerms$, establishing \Cref{eq:ThetaSub}.

In terms of maps, if $m = \tau(t)$ is the map corresponding to $t$ with $b = vw$ a distinguished internal bridge, where without loss of generality we assume $v$ is an ancestor of $w$ in the $t$-tree of $m$, then $C$ is the component of $m \setminus b$ containing the root, with a new box vertex added and made adjacent to $v$, while $u$ is the remaining component with a new root vertex added and made adjacent to $w$.
\end{proof}

We now proceed to show that elements of $\contextClass$ factor into a non-empty sequence of elements in $\qClass$.
\begin{lemma} 
For the combinatorial classes $\contextClass$ and $\qClass$ we have
\begin{equation}\label{eq:QdecompW}
  \contextClass = SEQ_{\geq 1}(\mathcal{Q}) 
\end{equation}
At the level of generating functions, if $\qOGF(z,v)$ is the generating function enumerating objects of $\qClass$, with $v$ tagging internal bridges/closed proper subterms, then
\begin{equation}\label{eq:QdecompWOGF}
\contextOGF(z,v) = \frac{v\qOGF(z,v)}{1 - v\qOGF(z,v)}
\end{equation}
\end{lemma}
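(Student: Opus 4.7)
The plan is to establish the isomorphism $\contextClass = SEQ_{\geq 1}(\qClass)$ by a canonical decomposition that mirrors the argument of \Cref{lemma:qdecompq} almost verbatim, and then read off the generating function identity by carefully tracking the $v$-weight contributed by internal bridges.

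For the decomposition, given $c \in \contextClass$ I would look for the largest proper closed right subcontext $c_k \neq \hole$ of $c$. Such a largest element is well-defined because the proper right subcontexts of a simple one-hole context form a chain under the right-subcontext relation (they correspond to initial segments of the root-to-hole path in $c$'s syntax tree). If no such $c_k$ exists, then every proper right subcontext of $c$ is either $\hole$ or has a free variable, which is exactly the defining property of $\qClass$, and the output is the length-$1$ sequence $(c)$. Otherwise, writing $c = c' \circ c_k$, the maximality argument from \Cref{lemma:qdecompq} carries over verbatim: any non-trivial proper right subcontext $c'' \neq \hole$ of $c'$ must carry a free variable, for otherwise $c'' \circ c_k$ would be a strictly larger closed proper right subcontext of $c$, contradicting the choice of $c_k$. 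Hence $c' \in \qClass$, and since $c_k \in \contextClass$ has strictly smaller size, recursing yields a unique sequence $(c_1, \ldots, c_k) \in \qClass^k$ with $c = c_1 \circ \cdots \circ c_k$.

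The inverse direction is immediate: any non-empty sequence drawn from $\qClass$ composes to a simple, closed, non-trivial context. The two maps are mutual inverses because applying the decomposition to $c_1 \circ \cdots \circ c_k$ forces the largest proper closed right subcontext to be exactly $c_2 \circ \cdots \circ c_k$: any strictly larger candidate would intrude properly into $c_1$, which is impossible by $c_1 \in \qClass$. Under $\tau^{-1}$ the whole procedure is geometrically transparent: it slices the doubly-rooted map $\tau^{-1}(c)$ along the internal bridges lying on its root-to-box path, each cut isolating a single $\qClass$-piece, in keeping with the map-level characterisation of $\qClass$ as contexts whose root-box path carries no internal bridge.

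For \Cref{eq:QdecompWOGF}, sizes add under composition so a length-$k$ sequence contributes $\qOGF(z,v)^k$ in $z$. The $v$-accounting requires a little care: each of the $k-1$ composition sites creates exactly one new internal bridge at the join-point, and following the convention under which $\contextOGF$ also records the internal bridge / closed proper subterm contributed by the box once the hole is filled (the same convention that makes \Cref{eq:ThetaSubOGF} factor without an extra $v$), one further $v$ factor is attributed to the box itself. Thus a length-$k$ sequence contributes $v^k\qOGF(z,v)^k$ in total, and summing over $k \geq 1$ yields the stated $\contextOGF(z,v) = v\qOGF(z,v)/(1 - v\qOGF(z,v))$. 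The main obstacle is the maximality argument underpinning existence and uniqueness of the decomposition; once this ``longest closed suffix'' reasoning, already used for abstraction bodies in \Cref{lemma:qdecompq}, is transported to arbitrary simple closed non-trivial contexts, the rest of the proof is bookkeeping to match sizes and bridge counts on the two sides.
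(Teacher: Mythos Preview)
Your proof is correct and follows essentially the same approach as the paper: both factor a context $c \in \contextClass$ along its closed proper right subcontexts (equivalently, cut the doubly-rooted map along the internal bridges on the root-to-box path) to obtain a non-empty sequence of $\qClass$-pieces, with your recursive ``peel off the first $\qClass$-factor'' presentation being the context-language counterpart of the paper's all-at-once map-language description. Your treatment of the $v$-accounting---attributing $k-1$ factors to the newly created join-bridges and one to the box edge that becomes internal upon plugging, so that \Cref{eq:ThetaSubOGF} balances---is in fact more explicit than the paper's bridge-pairing remark.
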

\begin{proof}

Let $m \in \contextOGF$ be a doubly-rooted trivalent map corresponding to a one-hole context $c$, with $b_s = uv$ the unique external bridge of $m$ adjacent to its box vertex $v$. Let, also, $r$ be the root of $m$ and $p$ be the $c$-path between the unique neighbour of $r$ and the box vertex $v$.

Let us label the, say, $i$ bridges belonging to $p$ as $b_1, b_2, \dots, b_{i}$, ordered by their proximity to the root (so that $b_{i} = b_s$). We note that $i \geq 1$ by definition of $p$. Let $C_1, C_2, \dots, C_{i+1}$ be the $i+1$ connected components of $m' = m \setminus b_1 \setminus  \dots  \setminus b_i \setminus r$, ordered in a way such that $C_i$ is the unique component of $m'$ containing the endpoint of $b_i$ closest to the root.

Consider now a connected component $C_k,  k \in \{1, \dots, i \}$, and let $p_k$ be the restriction of $p$ in $C_k$, i.e., $p_k = p[V(p) \cap V(C_k)]$. By construction, there exist exactly two degree-2 vertices in $C_k$, say $s_k$ and $t_k$, with $s_k$ being the one closer to the root in $m$, in terms of distances along the $c$-tree of $m$. Modifying each $C_k$ by adding a new root vertex $r_k$ which we make adjacent to $s_k$ as well as a new box vertex $v_{k}$ which we make adjacent to $t_k$ we produce a map $C'_{k}$ satisfying the restriction of maps in $\qClass$: the unique path between $r_k$ and $v_k$ along the $\tau(C'_{k})$-spanning tree of $C'_{k}$ is exactly $p_k$ together with the edges $r_k s_k$ and $t_k v_k$, which by construction contains no internal bridges of $C_k$. Finally, we note that $C_{i+1} = v$ is just the box vertex of $m$ which we are free to discard. 

In terms of contexts, the above decomposition translates uniquely to a factorisation $c = c_1 \circ \dots \circ c_i$ where $c_{k} = \tau(C'_{k})$ for $1 \leq k \leq i$.

The above arguments provide a mapping from structures in $\contextClass$ to a unique non-empty sequence of elements $ C'_k \in \qClass$. With respect to enumeration we note again that each $C'_{k}$, $1 \leq k \leq i$, can be paired with a unique bridge of $m$, the one incident to the vertex of $C_{k}$ closest to the root, and so we introduce $v$-factors in the generating function $\frac{v\qOGF(z,v)}{1-v\qOGF(z,v)}$ to keep track of that data. Furthermore, the decomposition of $m$ described above is invertible: given the sequence of maps $(C'_1, \dots, C'_{i}) \in \qClass^{i}$ we may uniquely reconstruct $m$ by deleting the box vertex of each $C'_k \in \{C'_1, \dots, C'_{i-1}\}$ and identifying the resulting unique degree-2 vertex of each $C'_k$, for $k < i$, with the root of $C_{k+1}$. This new map, rooted at the root of $C_1$, is an element of $\contextClass$. Furthermore, all bridges/closed subterms are accounted for and so we have the desired equality between the corresponding generating functions.

In terms of contexts this corresponds to the fact that from the sequence of contexts $(c_1, \dots, c_{i}) \in \qClass^{i}$, we may uniquely reconstruct the element $c \in \contextClass$ as $c = c_1 \circ \dots \circ c_{i}$. %
\end{proof}

From the above bijections we finally obtain
\begin{lemma}
Let $\lamClosedSubOGF$ be the generating function enumerating closed linear lambda terms where $v$ tags closed proper subterms. We have that
\begin{equation}\label{eq:diffvW}
\frac{\partial}{\partial v} \lamClosedSubOGF(z,v) = -\frac{v^{2} z \lamClosedSubOGF(z,v)^{3} + z^{2} \lamClosedSubOGF(z,v) - \lamClosedSubOGF(z,v)^{2}}{{\left(v^{3} - v^{2}\right)} z \lamClosedSubOGF(z,v)^{2} + v z^{2} - {\left(v - 1\right)} \lamClosedSubOGF(z,v)} .
\end{equation}
\end{lemma}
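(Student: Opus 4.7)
The plan is to combine the three equations established earlier in this subsection---\Cref{eq:ldecompqOGF}, \Cref{eq:ThetaSubOGF}, and \Cref{eq:QdecompWOGF}---with a top-level decomposition of $\closedTerms$, and then eliminate the auxiliary generating functions $\qOGF$ and $\abstractionSubOGF$ by routine algebra.

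First I would record the top-level identity
\begin{equation*}
  \lamClosedSubOGF(z,v) = z^2 + zv^2 \lamClosedSubOGF(z,v)^2 + \abstractionSubOGF(z,v),
\end{equation*}
corresponding to splitting a closed linear $\lambda$-term into one of three shapes: the identity $\lambda x.x$ (size $2$, with no closed proper subterms), an application $(t_1~t_2)$ of two closed terms (whose closed proper subterms are $t_1$ and $t_2$ themselves---contributing the extra $v^2$---together with the closed proper subterms of each), or a non-identity closed abstraction, enumerated by $\abstractionSubOGF(z,v)$.

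Next, combining \Cref{eq:ThetaSubOGF} with \Cref{eq:QdecompWOGF} yields
\begin{equation*}
  \frac{\partial}{\partial v} \lamClosedSubOGF = \frac{\qOGF(z,v)}{1 - v\qOGF(z,v)}\, \lamClosedSubOGF,
\end{equation*}
so the task reduces to expressing $\qOGF$ rationally in $z$, $v$, and $\lamClosedSubOGF$. Solving \Cref{eq:ldecompqOGF} for $\qOGF = \abstractionSubOGF/(z^2 + \abstractionSubOGF)$ and substituting $\abstractionSubOGF = \lamClosedSubOGF - z^2 - zv^2 \lamClosedSubOGF^2$ from the top-level decomposition gives
\begin{equation*}
  \qOGF(z,v) = \frac{\lamClosedSubOGF - z^2 - zv^2 \lamClosedSubOGF^2}{\lamClosedSubOGF\,(1 - zv^2 \lamClosedSubOGF)}.
\end{equation*}
Plugging this into the formula for $\partial_v \lamClosedSubOGF$, the common factor $\lamClosedSubOGF(1 - zv^2\lamClosedSubOGF)$ cancels between numerator and denominator, and one is left with precisely the rational expression in \Cref{eq:diffvW}, after using the identity $(v-1)(zv^2 \lamClosedSubOGF^2 - \lamClosedSubOGF) = (v^3 - v^2)z\lamClosedSubOGF^2 - (v-1)\lamClosedSubOGF$ in the denominator and a single sign flip in the numerator.

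The main obstacle is justifying the top-level decomposition above, specifically verifying that the closed proper subterms of an application $(t_1~t_2)$ are exactly $t_1$, $t_2$, and the closed proper subterms of $t_1$ and of $t_2$, with no double-counting---a point which follows from transitivity of the subterm relation together with the observation that subterms of $t_1$ and subterms of $t_2$ form disjoint collections. Once this is in hand, the remainder of the proof is a purely mechanical manipulation of rational functions.
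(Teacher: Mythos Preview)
Your proof is correct and follows essentially the same route as the paper: both combine \Cref{eq:ThetaSubOGF} and \Cref{eq:QdecompWOGF} to get $\partial_v \lamClosedSubOGF = \frac{\qOGF}{1-v\qOGF}\,\lamClosedSubOGF$, then substitute $\qOGF = \abstractionSubOGF/(z^2+\abstractionSubOGF)$ from \Cref{eq:ldecompqOGF} and $\abstractionSubOGF = \lamClosedSubOGF - z^2 - zv^2\lamClosedSubOGF^2$ from the top-level decomposition of $\closedTerms$. If anything, you give a slightly fuller justification of the top-level identity (in particular the $v^2$ factor on the application summand), which the paper simply attributes to the definition of $\closedAbsClass$.
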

\begin{proof}
By \Cref{eq:ThetaSubOGF,eq:QdecompWOGF} we have

\begin{equation}\label{eq:TpointedAtBridges}
v \frac{\partial}{\partial v} \lamClosedSubOGF(z,v) = \contextOGF(z,v)~ \lamClosedSubOGF(z,v) = \frac{v\qOGF(z,v)}{1 - v\qOGF(z,v)} \lamClosedSubOGF(z,v)
\end{equation}
The result then follows by substituting $\qOGF = \frac{\abstractionSubOGF}{z^2 + \abstractionSubOGF}$ (which follows from \Cref{eq:ldecompqOGF}) and  $\abstractionSubOGF = \lamClosedSubOGF - zv^2(\lamClosedSubOGF)^2 - z^2$ (which follows from the definition of $\closedAbsClass$) into \Cref{eq:TpointedAtBridges} and finally dividing both sides by $v$.
\end{proof}

Before we proceed with the main result of this section, we present a number of definitions and lemmas useful for the proof.

\begin{figure}[h]
  \centering
  \includegraphics[scale=0.7]{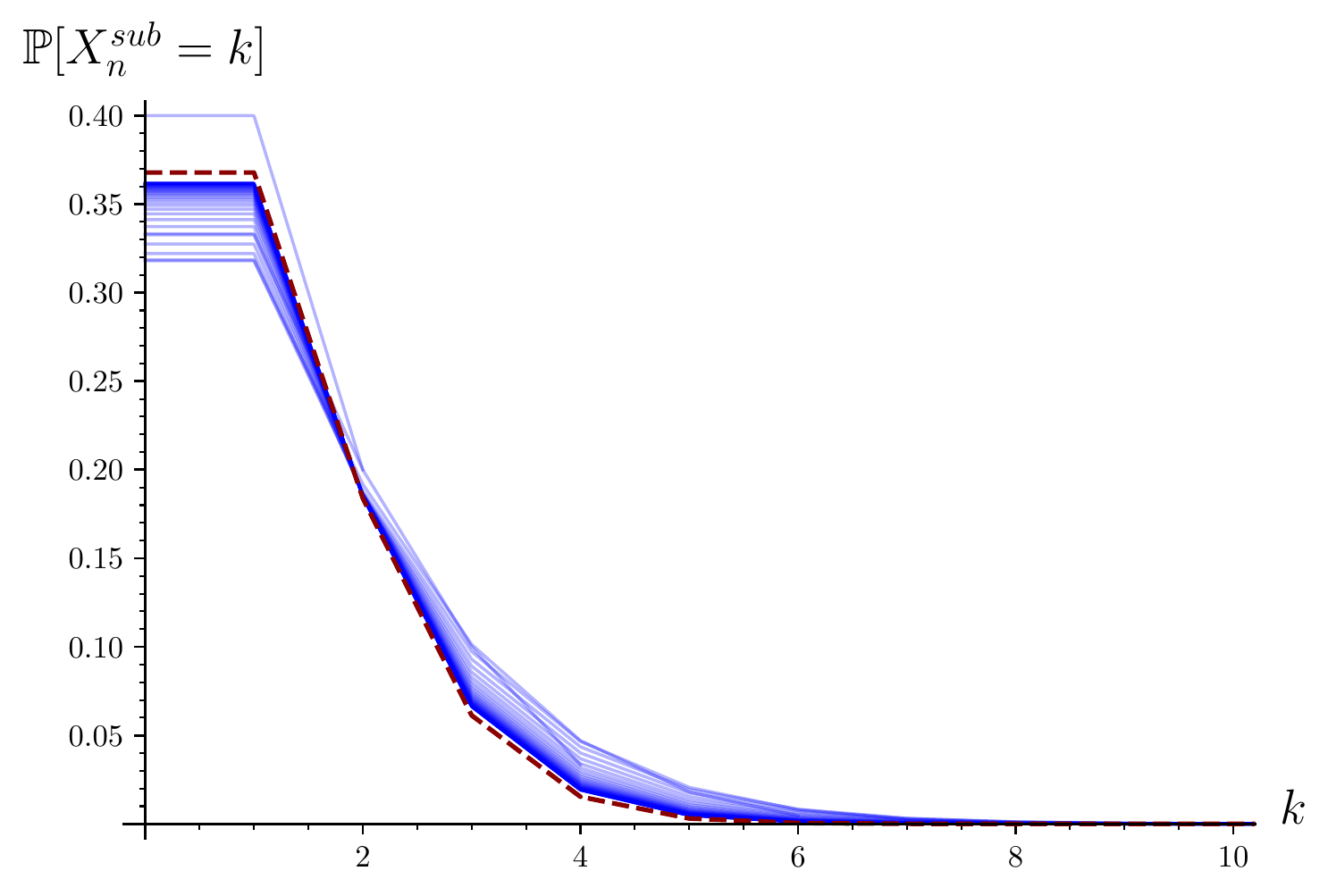}
  \caption{Density plots of $X^{sub}_n$ for $n = 2 \dots 100$ along with $Poisson(1)$ in red.}
  \label{fig:distPlotId}
\end{figure}

\begin{definition}[{Operators $\balancedPart[k]$}]
We define a family of operators $\balancedPart[k]$, parameterised by $k \in {-1} \cup \mathbb{N}$, which extract the {\em balanced part} of a polynomial $\eta \in \mathbb{Z}[f,z,v]$, defined as follows
\begin{equation}\label{eq:Bkdef}
\balancedPart[k] \left(\eta\right) = \sum_{\overset{i,j}{i \leq k+1}} \overline{\eta}_{j,i} v^j z^{2k - 2(i-1)} f^i.
\end{equation} 
where $\overline{\eta}_{j,i} = [v^j f^i z^{2k - 2(i-1)}] \eta$.
\end{definition}

\begin{definition}[$k$-admissible polynomial]
A polynomial $\eta \in \mathbb{Z}[f,z,v]$ is $k$-admissible if it can be written as a sum of monomials
\begin{equation}
\sum_{\overset{i,j,k}{l \geq \max\left\{0, 2k - 2(i-1) \right\}}} a_{j,i,l} f^i v^j z^l
\end{equation}
\end{definition}

\begin{lemma}\label{lemma:Blinearity}
The operator $\balancedPart[k]$ is linear, that is, if $\eta = \eta_1 + \eta_2$, 
\begin{equation}
\balancedPart[k](\eta) = \balancedPart[k](\eta_1) + \balancedPart[k](\eta_2)
\end{equation}
\end{lemma}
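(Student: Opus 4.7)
The plan is to unfold the definition of $\balancedPart[k]$ and observe that it is built out of two manifestly linear operations: coefficient extraction and finite summation. Since the lemma is essentially a formal bookkeeping result, the proof will be short and direct, with no real obstacle beyond verifying that the indices match.

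First I would recall that, by definition,
\begin{equation*}
\balancedPart[k](\eta) \;=\; \sum_{\substack{i,j \\ i \leq k+1}} [v^j f^i z^{2k-2(i-1)}]\eta \cdot v^j z^{2k-2(i-1)} f^i,
\end{equation*}
so the operator is a composition of coefficient extraction against a fixed family of monomials with a subsequent re-summation. The coefficient extraction operator $[v^j f^i z^{l}]$ is a $\mathbb{Z}$-linear functional on $\mathbb{Z}[f,z,v]$, hence in particular additive.

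Next, given $\eta = \eta_1 + \eta_2$, I would substitute and compute
\begin{equation*}
\overline{\eta}_{j,i} \;=\; [v^j f^i z^{2k-2(i-1)}](\eta_1 + \eta_2) \;=\; \overline{(\eta_1)}_{j,i} + \overline{(\eta_2)}_{j,i},
\end{equation*}
and then distribute this equality through the (finite) outer sum in the definition of $\balancedPart[k]$. Splitting the sum into two copies over the same index set yields $\balancedPart[k](\eta_1) + \balancedPart[k](\eta_2)$, which is precisely the claim.

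There is no genuine obstacle here: the content of the lemma is that a finite sum of linear functionals, each weighted by a fixed monomial, is itself linear. The only thing to be careful about is that the summation range $\{(i,j) : i \leq k+1\}$ does not depend on $\eta$, so the same index set is used for $\eta$, $\eta_1$, and $\eta_2$, which makes the splitting immediate. The lemma is then recorded as a tool to be used in subsequent computations where $\eta$ arises naturally as a sum (for instance, when applying $\balancedPart[k]$ to the right-hand side of a differential equation expressed as a polynomial in $f$, $z$, $v$).
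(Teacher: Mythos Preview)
Your proof is correct and is essentially the same as the paper's: the paper simply states that the claim follows immediately from the definition of $\balancedPart[k]$, and your argument is nothing more than an explicit unfolding of that observation (linearity of coefficient extraction plus a fixed, $\eta$-independent index set).
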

\begin{proof}
Follows immediately from \Cref{eq:Bkdef}.
\end{proof}

\begin{lemma}\label{lemma:Bmultiplicativity}
Let $\eta = \eta_1 \cdot \eta_2$, where $\eta_1$ is $k_1$-admissible and $\eta_2$ is $k_2$-admissible. Then $\eta$ is $(k_1 + k_2 + 1)$-admissible and
\begin{equation}\label{eq:balancedPartOfProduct}
\balancedPart[k_1 + k_2 + 1](\eta) = \balancedPart[k_1](\eta_1) \cdot \balancedPart[k_2](\eta_2)
\end{equation}
\end{lemma}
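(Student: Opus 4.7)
The plan is to reduce both assertions to monomial-level bookkeeping. I will expand each $\eta_r$ ($r=1,2$) as a finite sum of monomials $a^{(r)}_{j,i,l}\,f^i v^j z^l$ subject to its $k_r$-admissibility constraint, and work from a mild reformulation: a polynomial is $k$-admissible iff \emph{every} nonzero monomial satisfies $l \geq 2k - 2(i-1)$, with no case split. This reformulation is immediate, since when $i > k+1$ the right-hand side is negative and the inequality is automatic from $l \geq 0$.

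With this reformulation, admissibility of the product is a one-liner: an arbitrary monomial of $\eta_1 \eta_2$ has the form $f^{i_1+i_2} v^{j_1+j_2} z^{l_1+l_2}$, and summing the two unconditional bounds $l_r \geq 2k_r - 2(i_r-1)$ yields $l_1+l_2 \geq 2(k_1+k_2+1) - 2((i_1+i_2)-1)$, which is the $(k_1+k_2+1)$-admissibility condition.

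The multiplicativity identity \Cref{eq:balancedPartOfProduct} is the substance of the lemma. I will compute the coefficient of a fixed balanced monomial $f^i v^j z^l$ (with $i \leq k_1+k_2+2$ and $l = 2(k_1+k_2+1)-2(i-1)$) on each side. On the right, the coefficient is a sum over decompositions $(i_1+i_2,\, j_1+j_2,\, l_1+l_2) = (i,j,l)$ in which both factor monomials are \emph{balanced}, meaning $i_r \leq k_r+1$ and $l_r = 2k_r - 2(i_r-1)$. On the left, it is an a priori larger sum over all admissible decompositions. The main obstacle will be showing that the non-balanced admissible decompositions contribute zero. When both $i_1 \leq k_1+1$ and $i_2 \leq k_2+1$, the two admissibility bounds on $l_1, l_2$ are tight in sum, so each must be individually tight and the decomposition is automatically balanced. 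For the mixed case, say $i_1 \geq k_1+2$, the constraint $i \leq k_1+k_2+2$ forces $i_2 \leq k_2$, so the admissibility bound on $l_2$ is nontrivial; plugging in and using $l_1 = l - l_2$ yields $l_1 \leq 2k_1+2-2i_1 \leq -2$, contradicting $l_1 \geq 0$. Once these cases are eliminated, the coefficients on both sides agree monomial by monomial, and by linearity of $\balancedPart[k_1+k_2+1]$ (\Cref{lemma:Blinearity}) we obtain the claimed identity. The only real risk in the writeup is a bookkeeping slip in the $z$-exponent arithmetic, so I would verify explicitly that $(2k_1-2(i_1-1)) + (2k_2-2(i_2-1)) = 2(k_1+k_2+1) - 2(i-1)$ whenever $i = i_1+i_2$.
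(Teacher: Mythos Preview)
Your proof is correct and follows essentially the same approach as the paper's: reduce to monomials, use that the admissibility inequalities $l_r \ge 2k_r - 2(i_r-1)$ sum to exactly the $(k_1+k_2+1)$-admissibility bound, and observe that equality in the sum forces equality in each summand. Your reformulation of $k$-admissibility as the single unconditional inequality $l \ge 2k - 2(i-1)$ makes explicit what the paper uses tacitly, and your separate treatment of the case $i_1 \ge k_1+2$ (forcing $l_1 \le -2$) spells out a detail the paper absorbs into the phrase ``if either $l > 2k_1 - 2(i-1)$ or $l' > 2k_2 - 2(i'-1)$''.
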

\begin{proof}
The product of two monomials $m_{j,i} = a_{j,i,l} z^l f^i v^j$ taken from $\eta_1$ and $m_{i',j'} = b_{j,i,l} z^{l'} f^{i'} v^{j'}$ taken from $\eta_2$ yields a monomial in $\balancedPart[k_1 + k_2 + 1](\eta)$ if and only if $l = 2k_1 - 2(i-1)$ and $l' = 2k_2 - 2(i'-1)$, i.e only if both monomials belong to $\balancedPart[k_1](\eta_1)$ and $\balancedPart[k_2](\eta_2)$ respectively. Othewise, if either $l > 2k_1 - 2(i-1)$ or $l' > 2k_2 - 2(i'-1)$, the product will result in a monomial of degree in $z$ at least $2(k_1 + k_2) - 2((i+i') - 1) + 1$. Since $\eta_1, \eta_2$ are $k_1$- and $k_2$-admissible respectively, these are the only cases of monomials possible therefore $\eta$ is $(k_1 + k_2 + 1)$ admissible and its balanced part is given by \Cref{eq:balancedPartOfProduct}.
\end{proof}

\begin{lemma}\label{lemma:BdiffByf}
Let $\eta$ be $k$-admissible for $k \geq 0$. Then $\partial_{f} \eta$ is $(k-1)$-admissible and
\begin{equation}
\balancedPart[k-1](\partial_f \eta) = \sum_{i,j} i \overline{\eta}_{i,j} z^{2k - 2(i-1)} f^{i} v^{j}
\end{equation}
\end{lemma}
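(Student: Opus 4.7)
The plan is a direct index-shuffling argument that compares a monomial of $\eta$ against the monomial it produces after $\partial_f$ and selection of the balanced part.

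First, I would write $\eta$ out explicitly as a sum $\eta = \sum a_{j,i,l}\, f^{i} v^{j} z^{l}$ in which, by $k$-admissibility, every nonzero coefficient satisfies $l \geq \max\{0,\, 2k - 2(i-1)\}$. Differentiating term by term gives
\begin{equation*}
\partial_{f} \eta \;=\; \sum_{i\geq 1,\, j,\, l} i\, a_{j,i,l}\, f^{i-1} v^{j} z^{l}.
\end{equation*}
Setting $i' = i-1$, each surviving monomial has $z$-exponent $l \geq \max\{0,\, 2k - 2i'\} = \max\{0,\, 2(k-1) - 2(i'-1)\}$, which is exactly the $(k-1)$-admissibility condition. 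Thus $\partial_{f}\eta$ is $(k-1)$-admissible, disposing of the first half of the claim.

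For the second half, I would apply the definition of $\balancedPart[k-1]$ to pick out from $\partial_{f}\eta$ the monomials whose $z$-exponent is the minimal one, $l = 2(k-1) - 2(i'-1) = 2k - 2i'$, subject to $i' \leq k$. Reverting to the original index $i = i'+1$, this is precisely the condition $l = 2k - 2(i-1)$ with $i \leq k+1$, which is the defining condition of the terms selected by $\balancedPart[k](\eta)$. The coefficient of the selected term in $\partial_{f}\eta$ is $i \cdot a_{j,i,l} = i\, \overline{\eta}_{j,i}$, whence
\begin{equation*}
\balancedPart[k-1](\partial_{f} \eta) \;=\; \sum_{i,j} i\, \overline{\eta}_{j,i}\, v^{j} z^{2k-2(i-1)} f^{\,i-1},
\end{equation*}
matching the stated formula (up to the evident reading of the exponent on $f$).

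There is no real obstacle here: the lemma is purely bookkeeping, and the only subtlety is to be careful that the index shift $i \mapsto i-1$ caused by differentiation interacts correctly with the shift $k \mapsto k-1$ in the admissibility threshold, so that the two shifts cancel and the same monomials of $\eta$ that contribute to $\balancedPart[k](\eta)$ are exactly those whose images under $\partial_{f}$ contribute to $\balancedPart[k-1](\partial_{f}\eta)$. Once that alignment is checked, the factor $i$ appears automatically from the power rule and the proof is complete.
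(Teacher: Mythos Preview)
Your argument is correct and follows essentially the same monomial-by-monomial bookkeeping as the paper's proof: both verify that the index shift $i \mapsto i-1$ from $\partial_f$ aligns with the shift $k \mapsto k-1$ in the admissibility threshold, so that balanced monomials of $\eta$ map exactly to balanced monomials of $\partial_f\eta$ and the unbalanced remainder stays unbalanced. You are also right to flag the exponent on $f$: the displayed formula should read $f^{i-1}$ (the paper's own subsequent computations, e.g.\ \Cref{eq:BpartialFhNTimesH1}, use it that way), so your parenthetical remark is well placed.
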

\begin{proof}
The $k$-admissibility of $\eta$ implies that $\eta$ can be written as a sum of monomials of the form 
\begin{equation}
\balancedPart[k]\left( \eta \right) + R = \sum_{i,j} \overline{\eta}_{i,j} z^{2k - 2(i-1)} f^{i} v^{j} + R
\end{equation} 
where $R$ is a sum of monomials of the form $\underline{\eta}_{i,j} z^p f^i v^j$ for constants $\underline{\eta}_{i,j}$ and $p < 2k - 2(i-1)$. Then, any monomial $(i+1) \overline{\eta}_{i,j} z^l f^{i} v^{j}$ in $\partial_f \balancedPart[k](\eta)$ satisfies the conditions of $(k-1)$-admissibility, while any monomial $i \underline{\eta}_{i,j} z^{p} f^{i-1} v^j$ in $\partial_f R$ does not contribute to $\balancedPart[k-1](\eta)$ and, since $\eta$ is $k$-admissible, has a degree $p > 2k - 2(i-1) = 2(k-1) - 2i > 2(k-1) - 2(i-1)$ which also satisfies the conditions of $(k-1)$-admissibility.
\end{proof}

\begin{theorem}\label{lemma:poissonBridges}
Let $\chi_{sub}$ be the parameter corresponding to the number of closed proper subterms in a closed linear $\lambda$-term or, equivalently, of internal bridges in rooted trivalent maps. Then for the random variables $X^{sub}_{n}$ corresponding to $\chi_{sub}$ taken over $\closedTerms[n]$ respectively we have:

\begin{align}
X^{sub}_{n} &\overset{D}{\rightarrow} Poisson(1),
\end{align}
\end{theorem}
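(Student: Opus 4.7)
The plan is to apply the Poisson Schema of \Cref{lemma:poissonSchema} to the first-order ODE \Cref{eq:diffvW} for $\lamClosedSubOGF(z,v)$, with growth-rate input provided by the asymptotic estimate of \Cref{thm:closedTermAsympts} applied to $\lamClosedSubOGF(z,1)$ (period $a=3$, residue $b=2$, $\sigma = 1$ since $[z^{n-3}]\lamClosedSubOGF(z,1)/[z^n]\lamClosedSubOGF(z,1) = O(n^{-1})$ from $6^k k!$ growth). So I set $W_1(z,v,f)$ equal to the rational function on the right-hand side of \Cref{eq:diffvW}, and define $W_{N+1} = \partial_f W_N \cdot W_1 + \partial_v W_N$ inductively. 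The task reduces to verifying the three asymptotic conditions \Cref{eq:poissonDiffByF,eq:poissonDiffByV,eq:poissonBootstrap}.

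The essential difficulty, absent in the identity-subterm case, is that $W_1$ is rational rather than polynomial, so the $W_N$'s do not have a simple polynomial shape. This is precisely what the $k$-admissibility machinery and the $\balancedPart[k]$ operators were set up to handle. My first step is to clear denominators and write $W_N = P_N(z,v,f)/Q_N(z,v,f)$ for polynomials $P_N, Q_N$, then show by induction on $N$ that, after evaluation at $v=1$ and $f = \lamClosedSubOGF(z,1)$, both $P_N$ and $Q_N$ are $k$-admissible for appropriate integers $k = k(N)$, and that their balanced parts $\balancedPart[k](P_N)$, $\balancedPart[k](Q_N)$ control the dominant asymptotics in $[z^n]$, $n \equiv 2 \pmod 3$. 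The inductive step uses exactly \Cref{lemma:Blinearity}, \Cref{lemma:Bmultiplicativity}, and \Cref{lemma:BdiffByf}: multiplying by $W_1$ shifts the admissibility index in a controlled way, and differentiating by $f$ lowers it by $1$, whereas differentiating by $v$ either leaves the admissibility index the same while killing the balanced monomials (since the $v$-degree does not affect the $z$-versus-$f$ balance) or yields strictly subdominant monomials.

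Having these structural lemmas in hand, the bootstrap \Cref{eq:poissonBootstrap} follows from identifying the balanced part of $W_1\lvert_{v=1,f=\lamClosedSubOGF(z,1)}$: modulo subdominant terms, the numerator reduces to $\lamClosedSubOGF^2$ and the denominator to $\lamClosedSubOGF$, giving $W_1 \sim \lamClosedSubOGF(z,1)$ asymptotically in $[z^n]$, $n\equiv 2 \pmod 3$, exactly as in the identity-subterm case and in analogy with the $z^{l}\lamClosedSubOGF^k$ estimate of \Cref{lemma:tupletsAsymptShifted}. Condition \Cref{eq:poissonDiffByF} then becomes an application of \Cref{lemma:Bmultiplicativity} and \Cref{lemma:BdiffByf} to the rational $W_N$: the balanced part of $\partial_f W_{N+1}\cdot W_1$ matches that of $W_N$ after evaluation. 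Condition \Cref{eq:poissonDiffByV} is the easier one, as $\partial_v$ produces only lower-degree (in $z$) contributions by the same admissibility bookkeeping, hence is $O([z^{n-3}]\lamClosedSubOGF(z,1))$.

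The main obstacle I anticipate is the inductive $k$-admissibility argument for $W_N$: because $W_1$ is a ratio and because $\partial_f$ and $\partial_v$ interact nontrivially with the quotient rule, one must track the admissibility of $P_N$ and $Q_N$ jointly and verify that the cancellations produced by clearing the denominator $Q_N$ at each step preserve the balanced/subdominant decomposition. Once the combinatorial bookkeeping is stabilised by a careful choice of $k(N)$ (for instance $k(N) = N$ or something similarly linear in $N$, determined by chasing the degree shifts through \Cref{lemma:Bmultiplicativity} and \Cref{lemma:BdiffByf}), all three conditions of \Cref{lemma:poissonSchema} apply and the conclusion $X_n^{sub} \overset{D}{\rightarrow} Poisson(1)$ follows immediately. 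The moment-convergence argument inside the Poisson Schema then takes care of the final probabilistic step, exactly as in the loop/identity-subterm case.
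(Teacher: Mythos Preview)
Your plan follows the same route as the paper, but you have underestimated where the real work lies and slightly misdiagnosed the role of $\partial_v$.

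First, a simplification you are missing: the denominator of $W_N$ is not a generic $Q_N$ to be tracked separately.  Because $W_1$ has denominator $g = f^2v^3z - f^2v^2z + vz^2 - fv + f$ and the recursion $W_{N+1} = \partial_f W_N\cdot W_1 + \partial_v W_N$ only ever introduces further powers of $g$, one has $W_N = h_N/g^{2N-1}$ for a polynomial $h_N$.  Crucially, $g\rvert_{v=1} = z^2$, so after evaluation the denominator becomes a pure power of $z$, and the asymptotics of $[z^n]W_N$ reduce to controlling the balanced part of $h_N$ alone via \Cref{lemma:tupletsAsymptShifted}.  There is no need to track admissibility of numerator and denominator jointly.

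Second, $k$-admissibility of $h_N$ is necessary but far from sufficient.  Admissibility only tells you which monomials are potentially dominant; it does not tell you that the dominant contribution is exactly $[z^n]\lamClosedSubOGF(z,1)$ rather than, say, $c\cdot[z^n]\lamClosedSubOGF(z,1)$ for some other constant $c$.  What the paper actually proves by induction is a set of four exact identities on the coefficients $\alpha_{N,j,i}$ of the balanced part: $\sum_i i\,\alpha_{N,0,i}=1$, $\sum_i \alpha_{N,0,i}=0$, and both sums vanish for $j\ge 1$.  These are what force the $N$-th factorial moment to be asymptotically $1$.  Your sentence ``the balanced part of $\partial_f W_{N+1}\cdot W_1$ matches that of $W_N$ after evaluation'' is the right target, but establishing it requires this coefficient bookkeeping across all four summands of $h_{N+1}$.

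Third, your claim that $\partial_v$ ``kills the balanced monomials'' or is ``the easier one'' is not correct.  Differentiating $v^j z^l f^i$ by $v$ gives $j\,v^{j-1}z^lf^i$, which has the same $(z,f)$-balance, so balanced monomials survive $\partial_v$ with altered coefficients.  The two $\partial_v$-summands in $h_{N+1}$ do contribute to the balanced part, and showing that their net contribution to $\alphasWSummed$ and $\alphasSummed$ is zero is exactly where the auxiliary identities $\sum_i \alpha_{N,j,i}=0$ are used.  This part is no easier than the $\partial_f$ part; all four summands must be handled in parallel.
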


\begin{proof}

Let $W_N(z,v,f)$  be such that $W_N(z,v,\lamClosedSubOGF(z,v)) = \frac{\partial^N}{\partial v^N} \lamClosedSubOGF(z,v)$.
By successively differentiating \Cref{eq:diffvW} we have that $W_N$ is a rational function
\begin{equation}\label{eq:hNovergk}
W_N = \frac{h_N}{g^{k}}
\end{equation}
where $k = 2N - 1$, $h_N \in \mathbb{Z}[f,z,v]$, and

\begin{equation}
g = f^2v^3z - f^2v^2z + vz^2 - fv + f.
\end{equation}
By definition, $W_N\lvert_{f = \lamClosedSubOGF(z,v)}$ equals the $N$-th derivative of $\lamClosedSubOGF(z,v)$ with respect to $v$, which counts rooted trivalent maps with the $v$-mark erased from $N$ of its bridges. This implies that $W_N\lvert_{f = \lamClosedSubOGF(z,1), v=1}$ counts properly-sized objects: the coefficients $[z^n] W_N(z,v,\lamClosedSubOGF(z,v))$ will be 0 for $n \not\equiv 2 \pmod{3}$ and $n < 2$. Therefore, $[f^i] h_N$ has minimum degree in $z$ at least $2k - 2(i-1)$, otherwise 
\[
[z^p] \left. \frac{[f^i] h_N }{g^k} \right\rvert_{f = \lamClosedSubOGF(z,1), v=1} = [z^p] \frac{\left. [f^i] h_N \right\rvert_{f = \lamClosedSubOGF(z,1), v=1}}{z^{2k}}
\]
would be non-zero for some $p < 2$ which we know not to be the case. Therefore, using the $\balancedPart[k]$ operator, we may expand $h_N$ as the following sum of its {\em balanced} and {\em unbalanced} parts
\begin{equation}\label{eq:balancedAndRestHn}
h_N = \balancedPart[k](h_N) + R = \sum_{j} v^j \sum_{i} \alpha_{N,j,i} z^{2k - 2(i-1)} f^i+ R_j
\end{equation}
where $R = h_N - \balancedPart[k](h_N)$, $R_j = [v^j] R$, and $\alpha_{N,j,i} = [v^j f^i z^{2k-2(i-1)}] \balancedPart[k](h_N)$. The above argument then implies that $R$ is a sum of monomials of the form $\beta_{N,j,i} v^j f^i {z^l}$ where $i \geq 3, l > 2k - 2(i-1)$, and $\beta_{N,j,i} \in \mathbb{Z}$ and therefore $h_N$ is $k$-admissible.

Let us now introduce the following two operators obtained by evaluating $\partial_f \left( \balancedPart[k] (\cdot) \right)$ and $\balancedPart[k](\cdot) $ at $f = z = v = 1$:
\begin{align*}
\alphasWSummed[k](\eta) &=  \left.\left(\partial_f \balancedPart[k](\eta)\right) \right\rvert_{f=1, z=1, v=1} = \sum_{i,j} i \overline{\eta}_{N,j,i}, \\
\alphasSummed[k](\eta) &=  \left.\left(\balancedPart[k](\eta)\right) \right\rvert_{f=1, z=1, v=1} = \sum_{i,j} \overline{\eta}_{N,j,i}.
\end{align*} 

Notice then that, by \Cref{lemma:tupletsAsymptShifted}, a term of $W_N$ whose numerator is given by $\alpha_{N,j,i} z^{2k - 2(i-1)} f^i$ is such that
\begin{equation*}
\left. [z^n] \frac{\alpha_{N,j,i} z^{2k - 2(i-1)} f^i}{g^k} \right\lvert_{v=1, f = \lamClosedOGF(z)} = [z^n] \frac{\alpha_{N,j,i} \lamClosedOGF(z)^i}{z^{2(i-1)}} \sim  i ~ \alpha_{N,j,i} ~ [z^n] \lamClosedOGF(z) ,\qquad n \equiv 2\pmod{3}
\end{equation*}
and so we have 
\begin{equation}\label{eq:BalancedPartContribs}
[z^n] \left(  \left. \frac{\balancedPart[k](h_N)}{g^{k}} \right\lvert_{v=1, f = \lamClosedOGF(z)} \right) \sim\sum_{i,j} i \alpha_{N,j,i} [z^n] \lamClosedOGF(z) = \alphasWSummed[k](h_N) [z^n] \lamClosedOGF(z)
\end{equation}
while the coefficients of $z^n$ of monomials in the unbalanced part $R$ evaluated at $v=1, f = \lamClosedOGF(z)$ are asymptotically $O\left( [z^{n-3}] \lamClosedOGF(z) \right)$ for $n \equiv 2\pmod{3}$ and since there's only finitely many of them, we have 
\begin{equation}\label{eq:restPartDoesntContrib}
[z^n] \left(  \left. \frac{R}{g^{k}} \right\lvert_{v=1, f = \lamClosedOGF(z)} \right) = O\left( [z^{n-3}] \lamClosedOGF(z) \right)
\end{equation}

We will now proceed via induction to show that for any $N \geq 1$ the following hold:

\begin{align}
  \alphasWSummed[k]\left([v^0] h_N\right) = \sum_{i} i \alpha_{N,0,i} & = 1 \label{eq:v0Contributes1}, \\
  \alphasSummed[k]\left([v^0] h_N \right) = \sum_{i} \alpha_{N,0,i} & = 0  \label{eq:v0CoeffsSum0},
\end{align}
and for $j \geq 1$
\begin{align}
  \alphasWSummed[k]\left([v^j] h_N\right) = \sum_{i} i \alpha_{N,j,i} & = 0, \label{eq:vHigherContributes0}\\
  \alphasSummed[k]\left([v^j] h_N\right) = \sum_{i} \alpha_{N,j,i} &= 0. \label{eq:vHigherCoeffsSum0}
\end{align}
For the inductive base, notice that all four equations hold for
\[
  W_{1} = -\frac{f^3 v^2 z}{g} - \frac{f z^2}{g} + \frac{f^2}{g},
\]
which has balanced part $\balancedPart[1](h_{1}) = - f z^2 + f^2$.
For our inductive step, supposing that the desired properties hold for $W_N$, we begin by noting that, by the chain rule, $W_{N+1}$ may be written as
\[
W_{N+1} = \partial_f W_N W_1 + \partial_v W_N = \frac{\left(\partial_f h_N \cdot g^{k} - h_N \cdot \partial_f g^{k}\right) h_1}{g^{2k+1}} + \frac{\left(\partial_v h_N \cdot g^{k} - h_N \cdot \partial_v g^{k}\right) g}{g^{2k+1}}.
\]
By grouping together the summands and partially simplifying them, we obtain
\[
\begin{split}
W_{N+1} &=
\frac{\partial_f h_N \cdot g h_1 -k h_N \partial_f g \cdot h_1 + \partial_v h_N \cdot g^2 - k h_N \partial_v g \cdot g}{g^{k+2}} \label{eq:WnSummands} 
\end{split}
\]
which brings $W_{N+1}$ into the form of \Cref{eq:hNovergk} with
\begin{equation}\label{eq:WnBalancedSummands}
  h_{N+1} = \partial_f h_N \cdot g h_1 -k h_N \partial_f g \cdot h_1 + \partial_v h_N \cdot g^2 - k h_N \partial_v g \cdot g.
\end{equation}
Then, \Cref{lemma:Blinearity} allows us to compute the balanced part of $h_{N+1}$ as
\begin{equation}\label{eq:WnBalancedSummands}
\balancedPart[k+2](h_{N+1}) =  \balancedPart[k+2]\left(\partial_f h_N \cdot g \cdot h_1\right) - \balancedPart[k+2]\left(-k \cdot h_N \cdot \partial_f g \cdot h_1 \right) + \balancedPart[k+2]\left(\partial_v h_N \cdot g^2\right) - \balancedPart[k+2]\left(k \cdot h_N \cdot \partial_v g \cdot g \right)
\end{equation}

Therefore, to validate \Cref{eq:v0Contributes1,eq:v0CoeffsSum0,eq:vHigherContributes0,eq:vHigherCoeffsSum0}, it suffices to sum the contributions of each summand of \Cref{eq:WnBalancedSummands} to \Cref{eq:v0Contributes1,eq:v0CoeffsSum0,eq:vHigherContributes0,eq:vHigherCoeffsSum0}, which we now proceed to do.
\begin{enumerate}
\item For the first summand of \Cref{eq:WnBalancedSummands} we have
\begin{align*}
\balancedPart[k+2]\left({\partial_f h_N \cdot g \cdot h_1}\right) &= \balancedPart[k+1]\left( \partial_f h_N  \cdot h_1\right) \cdot \balancedPart[0]\left( g \right) & \text{by \Cref{lemma:Bmultiplicativity}} \\ 
&= \balancedPart[k-1]\left( \partial_f h_N \right) \cdot \balancedPart[1]\left( h_1 \right) \cdot \balancedPart[0](g) & \text{by \Cref{lemma:Bmultiplicativity}} 
\end{align*}  
By \Cref{lemma:BdiffByf} we have 
\begin{equation}
\begin{split}\label{eq:BpartialFhNTimesH1}
\balancedPart[k-1](\partial_f h_N) \cdot \balancedPart[1](h_1) &= \left(\sum_{i} i~\alpha_{N,j,i}~v^jz^{2k - 2(i-1)}f^{i-1}\right) \left( -z^2f + f^2 \right) \\
&= \sum_{i,j} ((i-1)~\alpha_{N,j,i-1} - i~\alpha_{N,j,i}) v^j z^{2(k+1)-2(i-1)}f^{i}.
\end{split}
\end{equation}
and so, by letting $a'_{i,j} = ((i-1)~\alpha_{N,j,i-1} - i~\alpha_{N,j,i})$, 
\begin{align*}
  \balancedPart[k+2](\partial_f h_N\cdot g \cdot h_1) &= \balancedPart[k-1]\left( \partial_f h_N \right) \cdot \balancedPart[1]\left( h_1 \right) \cdot \balancedPart[0](g) \\
                                                      &= \left( \sum_{i,j} a'_{j,i} v^j z^{2(k+1)-2(i-1)}f^{i} \right) \left(vz^2 - fv + f \right) \\
&= \sum_{i,j} (\alpha'_{j-1,i} - \alpha'_{j-1,i-1} + \alpha'_{j,i-1}) z^{2(k+2) - 2(i-1)} v^j f^i 
\end{align*}

Therefore, for any $j \geq 0$,

\begin{align}
\begin{split} \label{eq:firstSummandAlphaWSummed}
\alphasWSummed[k+2]\left([v^j] \left(\partial_f h_N \cdot h_1 \cdot g\right) \right)
&= \sum_{i} i \alpha'_{j-1,i} - \sum_{i} i \alpha'_{j-1,i-1} + \sum_{i} i \alpha'_{j,i-1} \\
&= \sum_{i} i \alpha'_{j-1,i} - \sum_{i} (i+1) \alpha'_{j-1,i} + \sum_{i} (i+1) \alpha'_{j,i} \\
&= \alphasWSummed[k+1]\left( [v^{j-1}] \left( \partial_f h_N \cdot h_1 \right) \right) \\
&- \alphasWSummed[k+1]\left( [v^{j-1}] \left( \partial_f h_N \cdot h_1 \right) \right) + \alphasSummed[k+1]\left([v^{j-1}] \left( \partial_f h_N \cdot h_1 \right) \right) \\
&+ \alphasWSummed[k+1]\left( [v^{j}] \left( \partial_f h_N \cdot h_1 \right) \right) + \alphasSummed[k+1]\left([v^{j}] \left( \partial_f h_N \cdot h_1 \right) \right) 
\end{split} \\
\begin{split} \label{eq:firstSummandAlphaSummed}
\alphasSummed[k+2]\left([v^j] \left(\partial_f h_N \cdot h_1 \cdot g\right) \right)
&= \sum_{i} \alpha'_{j-1,i} - \sum_{i} \alpha'_{j-1,i-1} + \sum_{i} \alpha'_{j,i-1} \\
&= \alphasSummed[k+1]\left( [v^{j}] \left( \partial_f h_N \cdot h_1 \right) \right)
\end{split}
\end{align}

By \Cref{eq:BpartialFhNTimesH1} we have
\begin{align}
\begin{split}\label{eq:BdiffFhNTimesH1alphaWSummed}
\alphasWSummed[k+1]\left([v^j] \left(\partial_f h_N \cdot h_1 \right)\right) &= \sum_{i} i((i-1)~\alpha_{N,j,i-1} - i~\alpha_{N,j,i})\\
&= -\alpha_{N,j,1} + (2\alpha_{N,j,1} - 4\alpha_{N,j,2}) + (6\alpha_{N,j,2} - 9\alpha_{N,j,3}) + \dots \\
&= \alpha_{N,j,1} + 2\alpha_{N,j,2} + 3\alpha_{N,j,3} + \dots = \sum_{i} i a_{N,j,i}
\end{split}\\
\begin{split}\label{eq:BdiffFhNTimesH1alphaSummed}
\alphasSummed[k+1]\left([v^j] \left(\partial_f h_N \cdot h_1 \right)\right) &= \sum_{i} ((i-1)~\alpha_{N,j,i-1} - i~a_{N,j,i}) \\
&= -\alpha_{N,j,1} + (\alpha_{N,j,1} - 2\alpha_{N,j,2}) + (2\alpha_{N,j,2} - 3\alpha_{N,j,3}) + \dots \\
&= 0
\end{split}
\end{align}

Since $[v^j] h_N = 0$ for $j < 0$, evaluating \Cref{eq:firstSummandAlphaWSummed} at $j=0$ and applying \Cref{eq:BdiffFhNTimesH1alphaWSummed,eq:BdiffFhNTimesH1alphaSummed} yields 
\begin{equation*}
\alphasWSummed[k+1]\left( [v^{0}] \left( \partial_f h_N \cdot h_1 \right) \right) + \alphasSummed[k+1]\left([v^{0}] \left( \partial_f h_N \cdot h_1 \right) \right) = \sum_{i} i a_{N,0,i} = 1
\end{equation*}
where the last step follows by applying \Cref{eq:v0Contributes1} inductively.
Similarly, evaluating \Cref{eq:firstSummandAlphaWSummed} at $j \geq 1$ yields 
\begin{equation*}
\alphasSummed[k+1]\left([v^{j-1}] \left( \partial_f h_N \cdot h_1 \right) \right) + \alphasWSummed[k+1]\left( [v^{j}] \left( \partial_f h_N \cdot h_1 \right) \right) + \alphasSummed[k+1]\left([v^{j}] \left( \partial_f h_N \cdot h_1 \right) \right) = 0
\end{equation*}
by applying \Cref{eq:vHigherContributes0} inductively.
Finally, \Cref{eq:firstSummandAlphaSummed} yields $0$ for any $j \geq 0$, due to \Cref{eq:BdiffFhNTimesH1alphaSummed}. %
Therefore the contribution of the first summand to \Cref{eq:v0Contributes1} is 1, while for \Cref{eq:vHigherContributes0,eq:v0CoeffsSum0,eq:vHigherCoeffsSum0} we have a contribution of 0.

\item Moving on to the next summand of \Cref{eq:WnBalancedSummands}, we have
\begin{align*}%
\balancedPart[k+2] \left( -k \cdot h_N \cdot \partial_f g \cdot h_1 \right) &= \balancedPart[k]\left( -k \cdot h_N \cdot \partial_f g \right)  \cdot \balancedPart[1](h_1) & \text{by \Cref{lemma:Bmultiplicativity}} \\ 
&= -k \cdot \balancedPart[k]\left(h_N\right) \cdot \balancedPart[-1]\left( \partial_f g \right)  \cdot \balancedPart[1](h_1) & \text{by \Cref{lemma:Bmultiplicativity}}
\end{align*}
From this we obtain
\begin{equation}
\begin{split}\label{eq:BsecondSummand}
\balancedPart[k+2] \left( -k \cdot h_N \cdot \partial_f g \cdot h_1 \right)  &= -k \cdot \balancedPart[k]\left(h_N\right) \cdot \balancedPart[-1]\left( \partial_f g \right)  \cdot \balancedPart[1](h_1) \\
&= -k \cdot \left(\sum_{i} \alpha_{N,j,i} v^j f^i z^{2k-2(i-1)}\right) \cdot (-v+1)\cdot (-fz^2 + f^2)   \\
&= -k \sum_{i}  (-(\alpha_{N,j,i-1} - \alpha_{N,j-1,i-1}) + (\alpha_{N,j,i-2} - \alpha_{N,j-1,i-2})) v^j f^i z^{2(k+2)-2(i-1)}
\end{split}
\end{equation}

As argued before, $a_{N,i,j} = 0$ for $j \leq 0$, so from \Cref{eq:BsecondSummand} we then obtain
\begin{align}\label{eq:secondSummandAlphaWSummed}
\begin{split}
\alphasWSummed[k+2]\left([v^j]\left( -k \cdot h_N \cdot \partial_f g \cdot h_1 \right) \right) &=  -k \sum_{i} i(-(\alpha_{N,j,i-1} - \alpha_{N,j-1,i-1}) + (\alpha_{N,j,i-2} - \alpha_{N,j-1,i-2})) \\
&= -k \left( 2\left( -\alpha_{N,j,1} + \alpha_{N,j-1,1} \right) + 3\left(-a_{N,j,2} + \alpha_{N,j-1,2} + \alpha_{N,j,1} -  \alpha_{N,j-1,1}\right) + \dots \right) \\
&= -k \left( (a_{N,j,1} - \alpha_{N,j-1,1}) + (\alpha_{N,j,2} - \alpha_{N,j-1,2}) + \dots \right) \\
&= -k \left( \sum_{i} a_{N,j,1} - \sum_{i} a_{N,j-1,1} \right)
\end{split}
\end{align}
and 
\begin{align}
\begin{split}\label{eq:secondSummandAlphaSummed}
\alphasSummed[k+2]\left([v^j]\left( -k \cdot h_N \cdot \partial_f g \cdot h_1 \right) \right) &= \sum_{i} (-(\alpha_{N,j,i-1} - \alpha_{N,j-1,i-1}) + (\alpha_{N,j,i-2} - \alpha_{N,j-1,i-2})) \\
&= \left( -\alpha_{N,j,1} + \alpha_{N,j-1,1} \right) +  \left(-\alpha_{N,j,2} + \alpha_{N,j-1,2} + \alpha_{N,j,1} -  \alpha_{N,j-1,1}\right) + \dots \\
&= 0
\end{split}
\end{align}

For $j=0$, \Cref{eq:secondSummandAlphaWSummed} becomes  $-k \sum_{i} a_{N,0,i}$ which yields 0 by induction and \Cref{eq:v0CoeffsSum0}, while for $j \geq 1$, \Cref{eq:secondSummandAlphaWSummed} it also yields 0 due to \Cref{eq:v0CoeffsSum0,eq:vHigherCoeffsSum0}. Therefore the contribution of the second summand to \Cref{eq:v0Contributes1,eq:vHigherContributes0} is 0. Finally, its contribution to \Cref{eq:v0CoeffsSum0,eq:vHigherCoeffsSum0} is obtained by evaluating \Cref{eq:secondSummandAlphaSummed} at any $j \geq 0$, which is also 0.
\hspace{1cm}

\item For the third summand of \Cref{eq:WnBalancedSummands} we have
\begin{align*}%
\balancedPart[k+2]\left(\partial_v h_N \cdot g^2\right) &= \balancedPart[k](\partial_v h_N) \cdot \balancedPart[1](g^2) 
\end{align*}
by \Cref{lemma:Bmultiplicativity}.
It is straightforward to notice that differentiation by $v$ preserves the balanced part of $h_N$ and only affects by a change of coefficients from $\alpha_{N,j,i}$ to $i \alpha_{N,j-1,1}$ so that %
\begin{equation}
\begin{split}
\balancedPart[k+2]\left(\partial_v h_N \cdot g^2\right) &= \balancedPart[k](\partial_v h_N) \cdot \balancedPart[1](g^2) \\
  &= \left(\sum_{i,j} j~\alpha_{N,j,i}~v^{j-1}z^{2k - 2(i-1)}f^{i}\right) \left(v^2z^4 - 2fv^2z^2 + f^2v^2 + 2fvz^2 - 2f^2v + f^2\right)\\
&= \sum_{i} \left((j - 1)\alpha_{N, j - 1, i} - 2(j - 1)\alpha_{N, j - 1, i - 1} + (j - 1)\alpha_{N, j - 1, i - 2} \right. \\
&\qquad + \left. 2j\alpha_{N, j, i - 1} - 2j\alpha_{N, j, i - 2} + (j + 1)\alpha_{N, j + 1, i - 2} \right)~v^{j}z^{2(k+2) - 2(i-1)}f^{i} 
\end{split} 
\end{equation}

From the above we obtain
\begin{equation}
\begin{split}\label{eq:thirdSummandAlphasWSummed}
\alphasWSummed[k+2]\left( [v^j] \left(\partial_v h_N \cdot g^2\right)\right)  &= \sum_{i} i (j - 1)\alpha_{N, j - 1, i} - \sum_{i} i 2(j - 1)\alpha_{N, j - 1, i - 1} + \sum_{i} i (j - 1)\alpha_{N, j - 1, i - 2}  \\
&+ \sum_{i} i 2j\alpha_{N, j, i - 1} - \sum_{i} i 2j\alpha_{N, j, i - 2} + \sum_{i} i  (j + 1)\alpha_{N, j + 1, i - 2}
\end{split}
\end{equation}
and
\begin{equation}
\begin{split}\label{eq:thirdSummandAlphasSummed}
\alphasSummed[k+2]\left( [v^j] \left(\partial_v h_N \cdot g^2\right)\right)  &= \sum_{i} (j - 1)\alpha_{N, j - 1, i} - \sum_{i} 2(j - 1)\alpha_{N, j - 1, i - 1} + \sum_{i} (j - 1)\alpha_{N, j - 1, i - 2}  \\
&+ \sum_{i} 2j\alpha_{N, j, i - 1} - \sum_{i} 2j\alpha_{N, j, i - 2} + \sum_{i} (j + 1)\alpha_{N, j + 1, i - 2}
\end{split}
\end{equation}

For $j = 0$, \Cref{eq:thirdSummandAlphasWSummed} reduces to $\sum_{i} i \alpha_{N,1,i-2}$ which is zero due to induction and \Cref{eq:vHigherContributes0}. Similarly, for $j \geq 1$, all summands of \Cref{eq:thirdSummandAlphasWSummed} are zero again due to \Cref{eq:vHigherContributes0}. By similar arguments, \Cref{eq:thirdSummandAlphasSummed} is zero for any $j \geq 0$ due to induction and \Cref{eq:vHigherCoeffsSum0}. Therefore the contributions of the third summand to each of \Cref{eq:v0Contributes1,eq:v0CoeffsSum0,eq:vHigherContributes0,eq:vHigherCoeffsSum0} are 0. 
\hspace{1cm}

\item Finally, for the fourth summand of \Cref{eq:WnBalancedSummands} we have
\begin{align*}
\balancedPart[k+2]\left(-k \cdot h_N \cdot \partial_v g \cdot g \right) &= -k \cdot \balancedPart[k+1]\left(h_N \cdot \partial_v g\right) \cdot \balancedPart[0](g) & \text{by \Cref{lemma:Bmultiplicativity}} \\
&= -k \cdot \balancedPart[k]\left(h_N\right) \cdot \balancedPart[0]\left(\partial_v g\right) \cdot \balancedPart[0](g) & \text{by \Cref{lemma:Bmultiplicativity}}
\end{align*}
From this we obtain
\begin{equation}
\begin{split}\label{eq:BFourthSummandPartial}
\balancedPart[k+2]\left(-k \cdot h_N \cdot \partial_v g \cdot g \right)
  &= -k \cdot \balancedPart[k]\left(h_N\right) \cdot \balancedPart[0]\left(\partial_v g\right) \cdot \balancedPart[0](g) \\
  &= -k \left(\sum_{i} \alpha_{N,j,i} v^i z^{2k-2(i-1)} f^i\right) \left(z^2 - f \right)  \\
&= k \sum_{i} \left(\alpha_{N,j,i-1} - \alpha_{N,j,i} \right)  v^i z^{2(k+1)-2(i-1)} f^i.
\end{split}  
\end{equation}
and so, by letting $a''_{i,j} = k (\alpha_{N,j,i-1} - \alpha_{N,j,i})$, we have
\begin{equation}
\begin{split}
\balancedPart[k+2]\left(-k \cdot h_N \cdot \partial_v g \cdot g \right) &= \left(vz^2 - fv + f \right) \left( \sum_{i,j} a''_{j,i} v^j z^{2(k+1)-2(i-1)}f^{i} \right) \\
&= \sum_{i,j} (\alpha''_{j-1,i} - \alpha''_{j-1,i-1} + \alpha''_{j,i-1}) z^{2(k+2) - 2(i-1)} v^j f^i 
\end{split}
\end{equation}

Summing the coefficients of \Cref{eq:BFourthSummandPartial} we obtain
\begin{align}
\begin{split}\label{eq:fourthSummandPartialAlphasWsummed}
\alphasWSummed[k+1]\left( [v^j] \left( -k \cdot h_N \cdot \partial_v g \right)\right) &= k\sum_{i} i(\alpha_{N,j,i-1} - \alpha_{N,j,i}) \\
&= k \left(-\alpha_{N,j,1} + 2(\alpha_{N,j,1} - \alpha_{N,j,2}) + 3(\alpha_{N,j,2} - \alpha_{N,j,3}) + \dots \right) \\
&= k \left(\alpha_{N,j,1} + \alpha_{N,j,2} + \alpha_{N,j,3} + \dots \right)
\end{split} \\
\begin{split}\label{eq:fourthSummandPartialAlphaSummed}
\alphasSummed[k+1]\left( [v^j] \left( -k \cdot h_N \cdot \partial_v g \right)\right) &= \sum_{i} (\alpha_{N,j,i-1} - \alpha_{N,j,i})\\
&= -\alpha_{N,j,1} + (\alpha_{N,j,1} - \alpha_{N,j,2}) + (\alpha_{N,j,2} - \alpha_{N,j,3}) + \dots \\
&= 0.
\end{split}
\end{align}

By induction, \Cref{eq:fourthSummandPartialAlphasWsummed} gives $0$ for any $j \geq 0$ due to \Cref{eq:v0CoeffsSum0,eq:vHigherCoeffsSum0}. Adapting the arguments used for the first summand of \Cref{eq:WnBalancedSummands} by swapping $a''$ for $a'$, we obtain, for all $j\geq0$
\begin{align*}
\alphasWSummed[k+2]\left( [v^j] \left( -k \cdot h_N \cdot \partial_v g  \cdot g \right)\right) &= 0, \\
\alphasSummed[k+2]\left( [v^j] \left( -k \cdot h_N \cdot \partial_v g  \cdot g \right)\right) &= 0
\end{align*}
Therefore the contributions of the fourth summand to each of \Cref{eq:v0Contributes1,eq:v0CoeffsSum0,eq:vHigherContributes0,eq:vHigherCoeffsSum0} are 0.
\end{enumerate}

Finally, we have, by \Cref{eq:BalancedPartContribs,eq:restPartDoesntContrib}, that $[z^n] W_{N} \sim [z^n]\frac{\balancedPart[k+2]\left(h_N\right)}{g^{k+2}}$. Since 
\begin{equation}
\balancedPart[k+2]\left(\partial_f W_N \cdot W_1\right) = \balancedPart[k+2]\left(\partial_f h_N \cdot g \cdot h_1\right) - \balancedPart[k+2]\left(-k \cdot h_N \cdot \partial_f g \cdot h_1 \right)
\end{equation}
and
\begin{equation}
\balancedPart[k+2]\left(\partial_v W_N\right) = \balancedPart[k+2]\left(\partial_v h_N \cdot g^2\right) - \balancedPart[k+2]\left(k \cdot h_N \cdot \partial_v g \cdot g \right),
\end{equation}

the computations of steps 1 to 4 above together with \Cref{eq:vHigherContributes0,eq:vHigherCoeffsSum0,eq:v0Contributes1,eq:v0CoeffsSum0} show that 
\begin{align*}
[z^n] \left. \left( \partial_f W_{N-1}W_1  \right) \right\lvert_{f = \lamClosedOGF(z), v=1} &\sim [z^n] W_{N-1}\lvert_{f = \lamClosedOGF(z), v=1} 
\sim [z^n] \lamClosedOGF(z), \\
[z^n] \left. \left( \partial v W_{N-1}\right)\right\lvert_{f = \lamClosedOGF(z), v=1} &= O\left([z^{n-3}] \lamClosedOGF(z) \right)
\end{align*}

Therefore \Cref{lemma:poissonSchema} applies for $n \equiv 2 \pmod{3}$, yielding our desired result. %

\end{proof} 

An application of the bijection shown in \Cref{fig:rootings} and of \Cref{lemma:rootingDoesntAffectDist} show that \Cref{lemma:poissonBridges} holds for unrooted trivalent maps too:

\begin{corollary}\label{cor:bridgestrivalent}
  Let $\chi_{b}$ be the parameter corresponding to the number of internal bridges in unrooted trivalent maps.
  Then we have $X^{b}_{n} \overset{D}{\rightarrow} Poisson(1)$ for the random variables $X^{b}_{n}$ counting $\chi_{b}$ over maps of size $n$.
\end{corollary}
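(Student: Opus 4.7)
The plan is to transport the $Poisson(1)$ limit of \Cref{lemma:poissonBridges} from the rooted class $\closedTerms$ to the unrooted class of trivalent maps, in two steps matching the two tools invoked in the statement of the corollary.

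First I would use the bijection of \Cref{fig:rootings}, which in the arity-zero case specialises to a size-$2$-shifted bijection between closed open rooted trivalent maps (that is, elements of $\closedTerms[n+2]$) and standard half-edge-rooted trivalent maps with $n$ edges. The bijection is entirely local around the root---it only subdivides the root edge and attaches a new 1-valent vertex---so it acts as the identity on the complement of the root neighbourhood and in particular preserves the number of internal bridges. Combined with \Cref{lemma:poissonBridges}, this yields that the number of internal bridges over half-edge-rooted trivalent maps with $n$ edges converges in distribution to $Poisson(1)$.

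Next, half-edge-rooted trivalent maps form a pointing of the unrooted class at one of the $2n$ half-edges, and since $\chi_b$ depends only on the underlying map and not on the marked half-edge, this pointing falls under the hypothesis of \Cref{lemma:rootingDoesntAffectDist} (and the iterated version recorded in the remark following it, which covers pointings of the form $z^k \partial_z^k$). Applying that lemma gives that the limit distribution of $\chi_b$ over unrooted trivalent maps of size $n$ coincides with the limit over the half-edge-rooted class, completing the chain and yielding the claimed $Poisson(1)$ law.

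The only delicate point is handling the size-$2$ shift induced by the bijection together with the periodicity convention $n \equiv 2 \pmod{3}$ used in \Cref{lemma:poissonBridges}, but this is immediate: one simply takes the limit along the corresponding shifted arithmetic progression on the map side, and the equality of probability generating functions in \Cref{lemma:rootingDoesntAffectDist} is exact so automorphism counts never enter the picture. No new ideas are needed beyond the clean application of the two cited results.
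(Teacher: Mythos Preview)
Your proposal is correct and follows exactly the route indicated in the paper: transport \Cref{lemma:poissonBridges} along the local root bijection of \Cref{fig:rootings} (which preserves internal bridges and only shifts size), then invoke \Cref{lemma:rootingDoesntAffectDist} to pass from the half-edge-rooted class to the unrooted one. Your treatment is in fact more detailed than the paper's one-line justification, and your remarks on the size shift and on automorphisms being irrelevant are both apt.
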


Finally, by $\Cref{lemma:poissonBridges}$ we have that the probability of an object in $\closedTerms$ to be bridgeless is $1/e$ which together with $\Cref{thm:closedTermAsympts}$ yields the following asymptotic form for the number of bridedgeless rooted trivalent maps and closed linear $\lambda$-terms without closed proper subterms:

\begin{corollary}
The number of bridgeless rooted trivalent maps and closed linear $\lambda$-terms without closed proper subterms of size $p \equiv 2 \pmod{3}$ is
\begin{equation}
[z^p] \sim \frac{3}{e\pi} 6^n n! ,\qquad p = 3n + 2.
\end{equation}
\end{corollary}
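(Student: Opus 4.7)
The plan is to read off the corollary as an immediate consequence of the Poisson limit law established in \Cref{lemma:poissonBridges} together with the known asymptotic count of closed linear $\lambda$-terms recalled in \Cref{thm:closedTermAsympts}. Since $X^{sub}_{n} \overset{D}{\rightarrow} Poisson(1)$ and the limiting distribution is a discrete distribution supported on $\mathbb{N}$, convergence in distribution implies convergence of the individual point masses; in particular,
\begin{equation}
  \mathbb{P}(X^{sub}_{n} = 0) \xrightarrow[n\to\infty]{} \mathbb{P}(Poisson(1) = 0) = e^{-1}.
\end{equation}

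Unpacking the left-hand side gives
\begin{equation}
  \mathbb{P}(X^{sub}_{n} = 0) = \frac{\lvert \bridgelessTerms_{n} \rvert}{\lvert \closedTerms_{n} \rvert},
\end{equation}
since an element of $\closedTerms$ lies in $\bridgelessTerms$ exactly when its number of closed proper subterms (equivalently, internal bridges in the corresponding rooted trivalent map) equals zero. Rearranging, we obtain $\lvert \bridgelessTerms_{n}\rvert \sim e^{-1} \lvert \closedTerms_{n}\rvert$ along the arithmetic progression $n \equiv 2 \pmod{3}$.

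Substituting the asymptotic $[z^p] \lamClosedOGF(z) \sim \tfrac{3}{\pi} 6^n n!$ from \Cref{thm:closedTermAsympts} for $p = 3n+2$ then yields
\begin{equation}
  [z^p] B(z) \sim \frac{1}{e} \cdot \frac{3}{\pi} 6^n n! = \frac{3}{e\pi} 6^n n!,
\end{equation}
which is precisely the claimed estimate. There is essentially no analytic obstacle here: the step worth being careful about is justifying that weak convergence to $Poisson(1)$ really does transfer to pointwise convergence of probability mass functions, but this is automatic since every point of $\mathbb{N}$ is a continuity point of the Poisson cumulative distribution function, so the standard portmanteau argument applies. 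All the heavy lifting has already been done in the proof of \Cref{lemma:poissonBridges} via the Poisson schema.
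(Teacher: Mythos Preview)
Your argument is correct and matches the paper's own reasoning: the corollary is stated there as an immediate consequence of \Cref{lemma:poissonBridges} (giving $\mathbb{P}(X^{sub}_n = 0) \to 1/e$) combined with \Cref{thm:closedTermAsympts}. If anything, you are slightly more explicit than the paper in justifying the passage from weak convergence to convergence of the point mass at $0$.
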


\section{Compositions of divergent powerseries and universality of parameter distributions}
It is combinatorial folklore that a large not-necessarily-connected trivalent map is almost surely connected. Such phenomena are abundant in the study of maps and graphs. As another example of this phenomenon, let us take $G(z)$ to be the exponential generating function for not-necessarily-connected labeled graphs and $C(z)$ the one enumerating connected labeled graphs. Then,

\begin{align}\label{eq:logExpConnected}
\begin{split}
  1 + G(z) &= e^{C(z)} \\
  C(z) &= \ln(G(z) + 1)
\end{split}
\end{align}
and a famous theorem by Bender, \cite[Theorem 1]{bender1975asymptotic}, may then be used to show that $[z^n] \ln(G(z) + 1) \sim [z^n] G(z)$ proving that asymptotically almost all labelled graphs of size $n$ are connected. A crucial element of this proof is the fact that the number of labelled graphs, $n! [z^n]G(z) = 2^{n \choose 2}$, grows much faster than $n!$.

More generally, this theorem by Bender shows that for appropriate formal power series $F(z,y)$ analytic at $0$ and $G(z)$ satisfying (among other requirements) $[z^{n-1}] G(z) = o\left([z^n] G(z)\right)$, one can deduce the coefficient asymptotics of the composition $F(z, G(z))$ from those of $G(z)$. The example given above corresponds to $F(z,y) = \ln(y + 1)$. Other examples of this can be given by taking instead $F$ and $G$ to be enumerating objects of some classes $\mathcal{F}$ and $\mathcal{G}$ whose number of structures grows rapidly with $n$. In such cases, the combinatorial intuition behind Bender's result is that, asymptotically, most of the structures in $\mathcal{F(G)}$ are constructed by taking a small $\mathcal{F}$-structure and replacing one of its atoms with the biggest appropriate $\mathcal{G}$-structure. The rapid growth conditions on the coefficients of $G(z)$ then precisely reflect the fact that there's many more ways to pick an element of $\mathcal{G}_n$ and compose it into a small $\mathcal{F}$-structure than there are ways to pick one in $\mathcal{G}_{n-1}$ and do so.

Given the above, one then expects that if $\chi$ is some parameter of a rapidly-growing class $\mathcal{C}$ of connected structures, then the parameter $\chi^*$ defined by summing $\chi$ over the connected components of not-necessarily-connected $\mathcal{G}$-structures behaves similarly. In the same vein, one would expect that for rapidly-growing combinatorial classes $\mathcal{F}, \mathcal{G}$, parameters $\chi$ defined over $\mathcal{G}$ and its natural extention over $\mathcal{F}(\mathcal{G})$ formed by summing $\chi$ over $\mathcal{G}$-substructures both behave in a similar way. Indeed, the result of the following subsection serves to formalise this intuition.

\subsection{Composition schema}

In this subsection we present a theorem inspired by Bender's theorem \cite[Theorem 1]{bender1975asymptotic} (as well as its extensions presented in \cite[Theorem 32]{borinsky2018generating} and \cite[Lemma B.8]{panafieu2019analytic}), which formalises the above discussion: under sufficient conditions, the limit law of a parameter marked by $u$ in a power series $G(z,u)$ remains unchaged when composing with some $F(z,u,y)$ provided that $F(z,1,y)$ is analytic at the origin and the coefficients of $G(z,1)$ grow rapidly enough.

Before proceeding with the main result of this subsection, we present a series of useful lemmas first.

\begin{lemma}\label{lemma:cauchyish}
Let $(a_n)_{n \in \mathbb{N}}$ be a positive increasing sequence such that $\frac{a_{n-1}}{a_{n}} = O\left(n^{-\sigma}\right)$ for $\sigma > 0$. Then for $n_0 \in \mathbb{N}$

\begin{equation}
\sum\limits_{k=n_0}^{n-n_0} a_{k} a_{n-k} = O(a_{n-n_0}).
\end{equation}
\end{lemma}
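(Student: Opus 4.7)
The plan is to follow the template already used in \Cref{lemma:extremalCauchy} and reduce the bound to an iterated application of the rapid-decay ratio hypothesis. I would begin by splitting off the two boundary terms of the sum,
\[
\sum_{k=n_0}^{n-n_0} a_k\, a_{n-k} \;=\; 2\, a_{n_0}\, a_{n-n_0} \;+\; \sum_{k=n_0+1}^{n-n_0-1} a_k\, a_{n-k},
\]
and observe that since $a_{n_0}$ is a constant independent of $n$, the first summand is already of order $O(a_{n-n_0})$. The task thus reduces to showing that the interior sum is also $O(a_{n-n_0})$.

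For $k = n_0 + j$ in the interior range, I would then write
\[
\frac{a_{n_0+j}\, a_{n-n_0-j}}{a_{n_0}\, a_{n-n_0}} \;=\; \frac{a_{n_0+j}}{a_{n_0}} \cdot \prod_{i=1}^{j} \frac{a_{n-n_0-i}}{a_{n-n_0-i+1}}
\]
and apply the hypothesis $a_{m-1}/a_m = O(m^{-\sigma})$ to each of the $j$ factors of the product. For $j$ small compared to $n$ this produces a bound of order $n^{-j\sigma}$, with the first factor absorbed into a constant depending only on $j$ and $n_0$. The symmetric bound, obtained after substituting $k \leftrightarrow n-k$, handles indices close to $n-n_0$.

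Summing these estimates over $k$, the contribution of the interior range is majorised by $a_{n_0}\, a_{n-n_0}$ times a convergent geometric-type series in the small parameter $n^{-\sigma}$, and therefore is itself $O(a_{n_0}\, a_{n-n_0}) = O(a_{n-n_0})$. Combining with the boundary term gives the claim.

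The main obstacle is that the per-term estimate above is sharpest near the boundary of the summation range, whereas for indices $k$ near the midpoint $n/2$ one has to check separately that $a_k\, a_{n-k}$ does not exceed the boundary value $a_{n_0}\, a_{n-n_0}$. Concretely, this reduces to verifying the valley shape of the function $k \mapsto a_k\, a_{n-k}$ on $[n_0, n-n_0]$, which in turn requires comparing the successive growth ratios $a_{k+1}/a_k$ and $a_{n-k}/a_{n-k-1}$ at indices of very different magnitudes. It is at this step that the full strength of the rapid-growth hypothesis (rather than just the monotonicity of $(a_n)$) has to be invoked, and it is the delicate point that the proof will need to handle carefully, mirroring the treatment already deployed in \Cref{lemma:extremalCauchy}.
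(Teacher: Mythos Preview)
Your approach is essentially the paper's: extract the two boundary terms $2a_{n_0}a_{n-n_0}$, then bound the interior sum termwise using the ratio hypothesis. The paper deals with the obstacle you flag by splitting the interior range into the finitely many values $k\le \sigma^{-1}$, for which the ratio is $O(n^{-k\sigma})$ exactly as you describe, and the remaining values $k>\sigma^{-1}$, for which it asserts a uniform bound $O(n^{-1-\sigma})$; since there are at most $\lfloor n/2\rfloor$ such terms, their total contribution is $O(n^{-\sigma})$.

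So the ``valley shape'' issue you isolate is precisely the content of the paper's second case, which the paper states rather than argues in detail. Your instinct that this is the delicate step is correct: the bound $\frac{a_{n_0+j}}{a_{n_0}}\cdot O(n^{-j\sigma})$ alone is not uniform in $j$, since the first factor can grow without control from the one-sided hypothesis $a_{m-1}/a_m=O(m^{-\sigma})$. What makes the paper's uniform $O(n^{-1-\sigma})$ bound work is that $a_{n_0+k}$ and $a_{n-n_0-k}$ come from the \emph{same} sequence, so fast growth of the former forces correspondingly fast decay of the product of ratios defining the latter; you should make this coupling explicit rather than relying on a separate monotonicity argument for $k\mapsto a_k a_{n-k}$.
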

\begin{proof}
Extracting the extremal terms we have

\begin{align}\label{eq:cauchyProd}
\sum\limits_{k=n_0}^{n-n_0} a_{k} a_{n-k} &= 2 a_{n_0} a_{n-n_0} \left( 1 + \sum\limits_{k = 1}^{\lfloor \frac{n}{2} \rfloor - n_0} \frac{a_{n_0 + k} a_{n-n_0-k}}{a_{n_0}  a_{n-n_0}} \right)
\end{align}
For large enough $n$, we have for $k \leq \sigma^{-1}$
\begin{equation*}
\frac{a_{n_0 + k} a_{n-n_0-k}}{a_{n_0}  a_{n-n_0}} = O(n^{-k \sigma})
\end{equation*}
while for $k > \sigma^{-1}$ we have
\begin{equation*}
\frac{a_{n_0 + k} a_{n-n_0-k}}{a_{n_0}  a_{n-n_0}} = O(n^{-1 - \sigma})
\end{equation*}
and there's at most $\lfloor n /2 \rfloor$ such terms, so that their sum overall is $O(n^{-\sigma})$. Therefore the righthand-side of \Cref{eq:cauchyProd} is $ 2 a_{n_0} a_{n-n_0} + O(n^{-\sigma}) = O(a_{n-n_0})$ as desired.

\end{proof}

\begin{lemma}\label{lemma:composingQparts}
Let $(a_n)_{n \in \mathbb{N}}$ be a positive increasing sequence such that $\frac{a_{n-1}}{a_{n}} = O\left(n^{-\sigma}\right)$ for $\sigma > 0$. Then there exists a constant $C > 0$ such that for all $q \geq 2$

\begin{equation}
\sum\limits_{\overset{k_1 + \dots + k_q = n}{\forall j . k_j \geq 1}} \prod_{j=1}^{q} a_{k_j} \leq C^{q-1} a_{n-q+1}
\end{equation}
\end{lemma}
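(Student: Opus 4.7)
The natural plan is to prove this by induction on $q$, with \Cref{lemma:cauchyish} supplying both the base case and the engine for the induction step.

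For the base case $q=2$, the statement is exactly \Cref{lemma:cauchyish} applied with $n_0 = 1$, which yields a constant $C_1 > 0$ such that
\[
  \sum_{k=1}^{n-1} a_k a_{n-k} \le C_1 a_{n-1}
\]
for all sufficiently large $n$ (and we can enlarge $C_1$ so that the inequality holds for all $n \ge 2$, since there are only finitely many exceptional values).

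For the inductive step, suppose the bound $S_q(n) := \sum_{k_1+\dots+k_q=n,\ k_j\ge 1}\prod_j a_{k_j} \le C^{q-1} a_{n-q+1}$ holds. I would split off $k_1$ and write
\[
  S_{q+1}(n) \;=\; \sum_{k_1=1}^{n-q} a_{k_1}\, S_q(n-k_1) \;\le\; C^{q-1} \sum_{k_1=1}^{n-q} a_{k_1}\, a_{(n-q+1)-k_1}.
\]
Setting $N = n-q+1$, the residual sum is precisely $\sum_{k_1=1}^{N-1} a_{k_1} a_{N-k_1}$, to which the base case applies, giving a bound of $C_1 a_{N-1} = C_1 a_{n-q}$. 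Thus $S_{q+1}(n) \le C^{q-1} C_1 a_{n-q}$, which closes the induction provided $C \ge C_1$.

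The whole argument is essentially one line once the right inductive hypothesis is formulated, so there is no serious obstacle. The only subtle point is the uniformity of the constant in $q$: one must choose $C \ge C_1$ \emph{once at the start}, before the induction begins, and then the same $C$ works at every level, with the $C_1$-factor produced by each application of the Cauchy-product bound being absorbed into the $C^{q-1} \mapsto C^q$ increment. A minor bookkeeping remark is that one should verify the base case covers all $n \ge 2$ (not just large $n$) by inflating $C_1$ to accommodate the finitely many small values, so the induction is clean for every $n \ge q$.
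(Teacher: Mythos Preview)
Your proof is correct and follows essentially the same approach as the paper: induction on $q$, with \Cref{lemma:cauchyish} providing both the base case and the bound needed in the inductive step (the paper peels off $k_q$ rather than $k_1$, but by symmetry this makes no difference). Your treatment of the constant---choosing $C \ge C_1$ once, and inflating $C_1$ to absorb the finitely many small values of $n$---is in fact more explicit than the paper's, which leaves these points implicit.
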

\begin{proof}
We proceed by induction. For $q=2$ the result holds by \Cref{lemma:cauchyish}. Let $q > 2$ and rewrite the sum as

\begin{equation}
\sum\limits_{k_{q} = 1}^{n-q+1} a_{k_{q}} \sum\limits_{\overset{k_1 + \dots + k_{q-1} = n - k_q}{\forall j . k_j \geq 1}} \prod_{j=1}^{q-1} a_{k_j}
\end{equation}
we then have by our inductive hypothesis
\begin{equation}
\sum\limits_{k_{q+1} = 1}^{n-q+1} a_{k_{q}} \sum\limits_{\overset{k_1 + \dots + k_{q-1} = n - k_q}{\forall j . k_j \geq 1}} \prod_{j=1}^{q-1} a_{k_j} \leq C^{q-2} \sum\limits_{k_q = 1}^{n-q+1} a_{k_{q}} a_{n - k_q - q + 2}
\end{equation}
from which, by applying once more our inductive hypothesis for $q = 2$, we obtain the desired result.

\end{proof}

\begin{lemma}\label{lemma:polyCauchyish}
Let $(a_n)_{n \in \mathbb{N}}$ be a positive increasing sequence such that $\frac{a_{n-1}}{a_{n}} = O\left(n^{-\sigma}\right)$ for $\sigma > 0$. Then for sufficiently large $n$

\begin{equation}
\sum_{k=n_0}^{n} C^k P(k) a_{n-k} = O(a_{n-n_0})
\end{equation}
for any constant $C \in \mathbb{R^{+}}$ and $P \in \mathbb{R^{+}}[k]$ a polynomial in $k$.
\end{lemma}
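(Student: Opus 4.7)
The plan is to split the sum $S := \sum_{k=n_0}^{n} C^k P(k)\, a_{n-k}$ at $k = \lfloor n/2 \rfloor$ into a ``head'' $S_1$ and a ``tail'' $S_2$, and to estimate each separately using a telescoping argument based on the ratio hypothesis $a_{n-1}/a_n = O(n^{-\sigma})$. The growth of $a_n$ that this ratio forces — faster than any exponential — is what makes the estimate possible: it both squashes the head (where $a_{n-k}/a_{n-n_0}$ is super-polynomially small in $k-n_0$) and dominates the tail (where only the crude monotonicity bound $a_{n-k} \leq a_{\lfloor n/2 \rfloor}$ is used).

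For the head, I would fix a constant $L$ with $a_{j-1}/a_j \leq L j^{-\sigma}$ for all sufficiently large $j$, so that for $n$ large and $n_0 \leq k \leq \lfloor n/2 \rfloor$ telescoping gives
\[
  \frac{a_{n-k}}{a_{n-n_0}} \;=\; \prod_{j=n-k+1}^{n-n_0} \frac{a_{j-1}}{a_j} \;\leq\; \bigl(L(2/n)^{\sigma}\bigr)^{k-n_0}.
\]
Writing $\epsilon_n := CL(2/n)^{\sigma} \to 0$ and substituting $j = k - n_0$, one bounds $S_1$ by $C^{n_0} a_{n-n_0} \sum_{j\geq 0} P(n_0 + j)\, \epsilon_n^{j}$. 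Since $P(n_0 + j) \leq A(1+j)^{\deg P}$ for some constant $A$, once $\epsilon_n \leq 1/2$ this is dominated term by term by the convergent series $A \sum_{j \geq 0} (1+j)^{\deg P} (1/2)^j$, which is a universal constant. Hence $S_1 = O(a_{n-n_0})$.

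For the tail, use $a_{n-k} \leq a_{\lfloor n/2 \rfloor}$ together with the trivial bound $\sum_{k=\lfloor n/2 \rfloor + 1}^{n} C^k P(k) = O(E^n Q(n))$ for suitable $E \geq 1$ and polynomial $Q$, so that $S_2 = O\bigl(E^n Q(n)\, a_{\lfloor n/2 \rfloor}\bigr)$. Rewriting the ratio condition as $a_j/a_{j-1} \geq j^{\sigma}/L$ for $j$ large and telescoping in the opposite direction,
\[
  \frac{a_{n-n_0}}{a_{\lfloor n/2 \rfloor}} \;\geq\; L^{-(n/2 - n_0)} \prod_{j=\lfloor n/2 \rfloor + 1}^{n-n_0} j^{\sigma},
\]
which grows super-exponentially in $n$ and therefore dominates $E^n Q(n)$. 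Thus $S_2 = o(a_{n-n_0})$, and combining the two estimates yields $S = S_1 + S_2 = O(a_{n-n_0})$.

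The main obstacle is the bookkeeping around the polynomial factor $P(k)$ inside $S_1$: although $P(k)$ can reach size $\mathrm{poly}(n)$ as $k$ approaches $n/2$, the uniform convergence of the polylogarithm-type sum $\sum_{j \geq 0} (1+j)^{\deg P} \epsilon_n^j$ (which follows from $\epsilon_n \to 0$) is what turns a naive $\mathrm{poly}(n) \cdot a_{n-n_0}$ estimate into a genuine $O(a_{n-n_0})$ bound. Once this is in place, the remaining estimates are a direct consequence of the factorial-type growth of $a_n$ implied by the ratio hypothesis.
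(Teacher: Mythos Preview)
Your argument is correct, but it takes a different route from the paper's proof. The paper does not split the sum at $\lfloor n/2 \rfloor$; instead it uses the super-exponential growth of $(a_n)$ in one stroke to find a constant $D$ with $C^k P(k) \leq D a_k$ for all $k \geq 1$, which immediately reduces the whole sum (for $n_0 \leq k \leq n-n_0$) to $D\sum_{k} a_k a_{n-k}$ and invokes the earlier \Cref{lemma:cauchyish}. The remaining $n_0$ terms with $k > n-n_0$ are then bounded trivially by the rapid growth of $a_n$. Your head/tail decomposition with direct telescoping is more self-contained --- it does not appeal to \Cref{lemma:cauchyish} at all --- and makes the mechanism behind the bound very explicit, at the cost of a bit more bookkeeping (the polylogarithm-type series for $S_1$ and the super-exponential comparison for $S_2$). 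The paper's version is shorter precisely because it cashes in on the lemma that was just proved.
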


\begin{proof}
The rapid growth of $(a_n)$ implies that there exists a constant $D > 0$ such that for every $k \geq 1$, $C^k P(k) \leq D a_k$ so that 

\begin{equation}
\sum_{k=n_0}^{n} C^k P(k) a_{n-k} \leq D \sum_{k=n_0}^{n} a_{k} a_{n-k}
\end{equation}
For $k \leq n-n_0$, \Cref{lemma:cauchyish} implies

\begin{equation}
\sum_{k=n_0}^{n-n_0} C^k P(k) a_{n-k} = O(a_{n-n_0})
\end{equation}
while for $n-n_0 < k \leq n$ we have
\begin{equation}
\sum_{k=n-n_0+1}^{n} C^k P(k) a_{n-k} = \sum_{k'=0}^{n_0-1} C^{n-k'} P(n-k') a_{k'} = O(a_{n-n_0})
\end{equation}
by the rapid growth of $(a_{n})_{n \in \mathbb{N}}$.

\end{proof}

\begin{theorem}\label{thm:benderDistr}
Let $G(z,u)$ be a bivariate powerseries 
\begin{equation*}
  \sum\limits_{n \geq 1} g_{n}(u) z^n
\end{equation*}
with positive coefficients and such that 
\begin{equation}\label{eq:benderGnCondition}
\frac{g_{n-1}(1)}{g_{n}(1)} = O\left( n^{-\sigma} \right)
\end{equation}
for $\sigma > 0$.

Let also $F(z, u, y)$ be a power series

\begin{equation}
\sum\limits_{i \geq 0}\sum\limits_{j \geq 0} f_{i,j}(u) z^i y^j
\end{equation}
with $f_{0,1}(1) = 1$, such that $F(z,1,y)$ represents a function analytic at the origin. Then the random variables $X_n$ whose probability generating function is given by

\begin{equation}
p_{n}(u) = \frac{[z^n] F(z, u, G(z,u))}{[z^n] F(z, 1, G(z,1))}
\end{equation}
admit the same limit distribution as the random variables $Y_n$ whose probability generating function is given by

\begin{equation}
q_{n}(u) = \frac{[z^n] G(z,u)}{[z^n] G(z,1)}.
\end{equation}
That is, if $Y_n \convergesInDist Y$, then $X_n \convergesInDist Y$ too.
\end{theorem}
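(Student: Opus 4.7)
The approach is to show that the $z^n$-coefficient of $F(z,u,G(z,u))$ is asymptotically dominated by its linear-in-$y$ contribution $f_{0,1}(u)\cdot G(z,u)$, so that after taking the ratio $p_n(u)$ reduces, up to the factor $f_{0,1}(u)/f_{0,1}(1) = f_{0,1}(u)$ (which equals $1$ at $u=1$ and, in the intended compositional applications such as $F(z,u,y)=\exp(y)$ or $F(z,u,y)=1/(1-y)$, is identically $1$ in $u$), to $q_n(u)$. Then L\'evy's continuity theorem for probability generating functions finishes.

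First, I decompose
\begin{equation*}
  F(z,u,y) = f_{0,1}(u)\,y + H(z,u,y),
\end{equation*}
where $H$ collects all monomials $z^i y^j$ with $(i,j)\neq (0,1)$: namely, $f_{0,0}(u)$, the higher powers $\sum_{j\ge 2} f_{0,j}(u) y^j$, and the shifted terms $\sum_{i\ge 1,\,j\ge 0} f_{i,j}(u) z^i y^j$. The key claim is that, for each fixed $u$ in a neighbourhood of $1$,
\begin{equation*}
  [z^n] H(z,u,G(z,u)) = o\!\left([z^n] G(z,u)\right) \qquad (n\to\infty).
\end{equation*}
Granted this, $p_n(u) = f_{0,1}(u)\,q_n(u)\,(1+o(1))$, which, together with $q_n(u)\to \mathbb{E}[u^Y]$ and $f_{0,1}(u)\equiv 1$ in the applications, yields $p_n(u)\to\mathbb{E}[u^Y]$, whence $X_n \convergesInDist Y$.

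Second, to prove the key claim at $u=1$: analyticity of $F(z,1,y)$ at the origin gives bounds $|f_{i,j}(1)|\le M\rho^{-(i+j)}$ for some $M,\rho>0$. Applying \Cref{lemma:composingQparts} to the sequence $a_n := g_n(1)$, whose hypothesis is supplied by \eqref{eq:benderGnCondition}, we obtain $[z^n] G(z,1)^j \le C^{j-1} g_{n-j+1}(1)$ for a constant $C>0$. Substituting into $H$ and summing over $i,j$ with the help of \Cref{lemma:polyCauchyish} (which absorbs the resulting Cauchy-type convolution of an exponential factor against the shifted $g_{n-k}(1)$'s) yields
\begin{equation*}
  [z^n] H(z,1,G(z,1)) = O\!\left(g_{n-1}(1)\right) = o\!\left(g_n(1)\right),
\end{equation*}
the last equality again by hypothesis on $G$. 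Transfer to $u$ in a neighbourhood of $1$ is by monotonicity and positivity: all coefficients $g_k(u)$ and $f_{i,j}(u)$ are dominated by their values at $u=1$ for $u\in[0,1]$, and the same bounds apply up to constants on a compact neighbourhood of $1$ via the joint analyticity of $F$ at $(0,1,0)$.

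The main obstacle I anticipate is making the error bounds \emph{uniform in $u$}: analyticity is posited only at $u=1$, so one must leverage the joint analyticity of $F$ near $(0,1,0)$ together with control on the $f_{i,j}(u)$ as functions of $u$ around $1$, and ensure the estimates are strong enough to propagate pointwise convergence of the PGFs through the composition. A related delicacy is the purely formal nature of $F(z,u,G(z,u))$, since $G(z,u)$ does not converge analytically in $z$; all Cauchy products and coefficient-extractions must be interpreted and justified at the level of formal power series, which is precisely the setting in which \Cref{lemma:cauchyish}, \Cref{lemma:composingQparts}, and \Cref{lemma:polyCauchyish} were designed to operate.
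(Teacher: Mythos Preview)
Your overall strategy---isolate the $(i,j)=(0,1)$ term, bound the remainder $H$ using the three preparatory lemmas, then conclude by a continuity theorem---is exactly the paper's approach, and your use of \Cref{lemma:composingQparts} and \Cref{lemma:polyCauchyish} to control $[z^n]H(z,1,G(z,1))=O(g_{n-1}(1))$ is correct and matches the paper.

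The gap is precisely where you yourself flag it: the ``transfer to $u$ in a neighbourhood of $1$''. Your proposed fix---invoke joint analyticity of $F$ near $(0,1,0)$---is not licensed by the hypotheses, which only give analyticity of $F(z,1,y)$. And your monotonicity argument only covers $u\in[0,1]$, which is not enough for a PGF continuity theorem on a full neighbourhood. The paper resolves this more cleanly by working with \emph{characteristic functions}: it substitutes $u=e^{i\tau}$ and exploits the elementary bound $|g_k(e^{i\tau})|\le g_k(1)$, valid because $G$ has positive coefficients. All the estimates from the three lemmas then apply verbatim with $g_k(1)$ in place of $|g_k(e^{i\tau})|$, yielding bounds that are automatically uniform in $\tau\in\mathbb{R}$, and L\'evy's convergence theorem for characteristic functions finishes. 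This sidesteps any need for analyticity of $F$ in $u$.

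A secondary point: your reduction to $p_n(u)=f_{0,1}(u)\,q_n(u)(1+o(1))$ is correct, but note that the stated hypothesis is only $f_{0,1}(1)=1$, not $f_{0,1}(u)\equiv 1$. You are right that in all the paper's applications (e.g.\ $F=\ln(1+y)$ or $F=(z^2+y)/(1-tz)$) one has $f_{0,1}$ independent of $u$, and indeed the paper's own proof tacitly uses this. But as written, neither your argument nor the paper's handles a genuinely $u$-dependent $f_{0,1}$; this is a minor looseness shared by both.
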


\begin{proof}
We begin by formally expanding $F(z, u, G(z,u))$ as a power series and extracting the coefficient of $z^n$:

\begin{equation}\label{eq:formalSeriesComposition}
  [z^n] F(z, u, G(z,u)) = f_{n,0}  + \sum_{t = 0}^{n-1} \sum\limits_{q = 1}^{n-t} f_{t,q}(u) \sum_{k_1 + \dots + k_q = n-t} \prod\limits_{j=1}^{q} g_{k_j}(u).
\end{equation}

Our goal then, is to to show that after making the change of variables $u=e^{i\tau}$, the summand of \cref{eq:formalSeriesComposition} corresponding to $t=0,~q=1$ (which is exactly $g_n(e^{i\tau})$ by our assumption that $f_{0,1} = 1$), yields asymptotically the dominant contribution for any $\tau \in \mathbb{R}$. This, after normalising by $[z^n] F(z,1,G(z,1))$, is then enough to prove that the characteristic functions $p_n(e^{i\tau})$ converge to the characteristic function $q_n(e^{i\tau})$ corresponding to $Y_n$. To this end, we will need to provide upper bounds for the summands of \cref{eq:formalSeriesComposition} evaluated at $u = e^{i\tau}$. We will do so by exploiting the fact that $\lvert g_{k_{j}}(e^{i\tau}) \rvert$ is bounded above for any real $\tau$ by $\overline{g}_{k_{j}} := g_{k_{j}}(1)$, which allows us to make use of \Cref{lemma:cauchyish,lemma:polyCauchyish,lemma:composingQparts}. We provide bounds for the summands of \cref{eq:formalSeriesComposition} as follows:

\begin{itemize}
  \item Firstly, we deal with the summands corresponding to $\left ([z^{t \geq 1}] F(z,u,y) \right )\lvert_{y = G(z,u)}$ (i.e the summands of \Cref{eq:formalSeriesComposition} in which $f_{t,q}(u)$ is such that $t \geq 1$), showing they are asymptotically negligible.
  \item Secondly, we deal with the summands corresponding to $\left ([z^0] F(z,u,y) \right )\lvert_{y = G(z,u)}$ (i.e the summands of \Cref{eq:formalSeriesComposition} which contain only factors of the form $f_{0,q}(u)$ for $q \geq 1$). Here, we distinguish two sub-cases: 
  \begin{itemize}
    \item One in which $k_1 = n$ is the sole summand appearing in the innermost sum of \cref{eq:formalSeriesComposition}. This case provides the main asymptotic contributions.
    \item The other corresponding to summands with $k_j < n - 1$ for all $j$, which we show are asymptotically negligible.
  \end{itemize}
  
\end{itemize}

{\em Summands corresponding to $\left ([z^{t \geq 1}] F(z,u,y) \right )\lvert_{y = G(z,u)}$.} 
By the growth of the coefficients of $G$ and $F$, we have $f_{n,0} = o(g_{n-1})$, therefore we can focus on the case of $f_{t,q}$ with $q \geq 1$. Since $f$ is analytic, there exists some $D$ such that $\lvert f_{t,q} (1) \rvert \leq D^{t+q}$ so that
by \Cref{lemma:cauchyish,lemma:polyCauchyish,lemma:composingQparts} we have that the restriction of \cref{eq:formalSeriesComposition} to $t \geq 1$ is

\begin{align}\label{eq:boundsForTGeq1}
\begin{split}
  \left\lvert \sum_{t = 1}^{n-1} \sum\limits_{q = 1}^{n-t} f_{t,q}(e^{i\tau}) \sum_{k_1 + \dots + k_q = n-t} \prod\limits_{j=1}^{q} g_{k_j}(e^{i\tau}) \right\rvert
  &\leq  \sum_{t = 1}^{n-1} \sum\limits_{q = 1}^{n-t} \lvert f_{t,q}(1) \rvert \sum_{k_1 + \dots + k_q = n-t} \prod\limits_{j=1}^{q} \overline{g}_{k_j} \\
  &\leq  \sum_{t = 1}^{n-1} \sum\limits_{q = 1}^{n-t} D^{t+q} C^{q-1} \overline{g}_{n-t-q+1} \qquad\text{by \Cref{lemma:composingQparts}} \\
  &\leq  \sum_{t = 1}^{n-1} D^{t} C^{-1} \sum\limits_{q = 1}^{n-t} D^{q} C^{q} \overline{g}_{n-t-q+1}  \\
  &\leq  \sum_{t = 1}^{n-1} D^{t} C^{-1} \sum\limits_{q = 1}^{n-t} K \overline{g}_{q} \overline{g}_{n-t-q+1}  \quad\text{by \cref{eq:benderGnCondition} $\rightarrow \exists K. \forall k. D^k C^k \leq K \overline{g}_{k}$} \\
  &\leq \sum_{t = 1}^{n-1} D^{t} C^{-1} K O(\overline{g}_{n-t}) \qquad\text{by \Cref{lemma:cauchyish}} \\
  &= O(\overline{g}_{n-1}) \qquad\text{by \Cref{lemma:polyCauchyish}} \\
\end{split}
\end{align}

{\em Summands corresponding to $\left ([z^0] F(z,u,y) \right )\lvert_{y = G(z,u)}$.} We will now focus on the summand of \cref{eq:formalSeriesComposition} corresponding to $t=0$

\begin{equation*}
\sum\limits_{q \geq 1}^{n} f_{0,q}(u) \sum_{k_1 + \dots + k_q = n} \prod\limits_{j=1}^{q} g_{k_j}(u).
\end{equation*}

We may rewrite this sum as follows, depending on whether $q=1$ or $q \geq 2$
\begin{equation}\label{eq:split}
f_{0,1}(u) g_{n}(u) + \sum\limits_{q \geq 2}^{n} f_{0,q}(u) \sum_{\overset{k_1 + \dots + k_q = n}{\forall j . k_j > 1}} \prod\limits_{j=1}^{q} g_{k_j}(u)
\end{equation}

We proceed by providing bounds for the second term of \cref{eq:split} evaluated at $u = e^{i\tau}$. Once again, we note that since $f$ is analytic at 0, $\lvert f_{0,q}(1) \rvert \leq D^q$ for some constant $D$. As such we have,

\begin{align}\label{eq:boundsForKSmallerThanNminusD}
\begin{split}
\left\lvert \sum\limits_{q \geq 2}^{n} f_{0,q}(e^{it}) \sum_{\overset{k_1 + \dots + k_q = n}{\forall j . k_j > 1}} \prod\limits_{j=1}^{q} g_{k_j}(e^{it}) \right\rvert & \leq \sum\limits_{q \geq 2}^{n} D^{q} \sum_{\overset{k_1 + \dots + k_q = n}{\forall j . k_j > 1}} \prod\limits_{j=1}^{q} \overline{g}_{k_j} \\ 
&\leq \sum\limits_{q \geq 2}^{n} D^{q} C^{q-1} \overline{g}_{n-q+1} \qquad\text{by \Cref{lemma:composingQparts}}\\
&\leq C^{-1} K \sum\limits_{q \geq 2}^{n-1} \overline{g}_q \overline{g}_{n-q+1} + D^{n} C^{n-1} \overline{g}_{1} \quad\text{by \cref{eq:benderGnCondition} $\rightarrow \exists K. \forall k. D^k C^k \leq K \overline{g}_{k}$}\\
&= O(\overline{g}_{n-1}) \qquad\text{by \Cref{lemma:cauchyish}}
\end{split}
\end{align}

Now, by Bender's theorem (\cite[Theorem 1]{bender1975asymptotic}) together with the fact that $f_{0,1} = 1$, we have that the coefficients of $F(z, u, G(z,1))$ grow asymptotically as $\overline{g}_n$. Therefore the second term of \cref{eq:split}, divided by $\overline{g}_n$, tends to $0$ as $n$ tends to infinity, due to the bound demonstrated in \cref{eq:boundsForKSmallerThanNminusD}. Similarly, the terms corresponding to $t \geq 1$ in \cref{eq:formalSeriesComposition}, when divided by $\overline{g}_n$, also tend to $0$, due to \cref{eq:boundsForTGeq1}.

Finally, since $f_{0,1}(1) = 1$, we obtain that, 

\begin{equation}
p_n(e^{it}) = g_{n}(e^{it}) + O\left( \frac{1}{n^{\sigma}}\right)
\end{equation}
for any real $t$ as $n \rightarrow \infty$, with the error being uniform in $t$, which by Levy's convergence theorem leads to our desired result.
\end{proof}

\subsection{Distribution of degree 1 vertices in $\oneThreeMaps$ and of free variables in $\openTerms$}\label{subsec:freeVars}

In this section, we will apply \Cref{thm:benderDistr} in order to determine the distribution of 1-valent vertices in (1,3)-valent maps, from which we derive the distribution of free variables in linear lambda terms considered up to variable exchange.

Let $\discoOneThreeMaps$ be the class of {\em not-necessarily-connected $(1,3)$-valent maps}.
Viewed as combinatorial maps, elements of $\discoOneThreeMaps$ consist of a permutation $v$ having cycles of length $3$ or $1$ and a fixed-point free involution $e$.
Using the symbolic method, we obtain the exponential generating function of maps in $\discoOneThreeMaps$ counted by number of half-edges (tracked by the variable $h$), which we can moreover refine to a bivariate generating function also keeping track of the number of 1-valent vertices (tracked by $u$):
\begin{equation}\label{eq:disco13Valent}
\left( \exp(h^2/2) \odot \exp(h^3/3 + uh) \right).
\end{equation}
Taking the logarithm of this expression, in full analogy with \Cref{eq:logExpConnected}, yields the generating function of {\em connected} (1,3)-maps, while an application of the $h \partial_h$ operator yields {\em half-edge rooted} connected $(1,3)$-maps counted by number of half-edges and 1-valent vertices.
Now, to switch from half-edge rooted $(1,3)$-maps to vertex-rooted $(1,3)$-valent maps, we apply the bijection explained in \Cref{par:rootedMaps} and seen in \Cref{fig:rootings}, which corresponds to multiplying by $h^4$ and adding the initial conditions\footnote{See \Cref{foot:empty} for an explanation of why $\loopMap$ must be considered as a special case, and a similar argument applies to $\oneEdgeMap$.} $uh^2 + h^4$ to yield
\begin{equation}\label{eq:hadamardspecHalfEdges}
    uh^2 + h^4 + h^5 \frac{\partial}{\partial h} \left( \ln \left( \exp(h^2/2) \odot \exp(h^3/3 + uh) \right) \right),
\end{equation}
and finally, to switch from counting half-edges to counting edges, we apply the change of variables $h^2 \mapsto z$:

\begin{equation}\label{eq:hadamardspec}
    \lamOpenOGF(z,u) = uz + z^2 + 2 z^3 \frac{\partial}{\partial z} \left( \ln \left( \left. \left( \exp(h^2/2) \odot \exp(h^3/3 + uh) \right) \right\rvert_{h=z^{1/2}} \right) \right).
\end{equation}
Although this equation was derived in a completely different manner from \Cref{eq:diff_open_linlams}, they both speak about the same bivariate generating function $\lamOpenOGF(z,u)$, which as explained in \Cref{subsection:intro} has an interpretation as counting linear $\lambda$-terms by number of subterms and free variables.
To be completely precise, the coefficient $[z^nu^k]\lamOpenOGF(z,u)$ counts the number of linear lambda terms with $n$ subterms and $k$ free variables, \emph{considered up to exchange of free variables.}
Equivalently, it counts the number of open rooted trivalent maps with $n$ edges and $k+1$ external vertices, considered up to relabelling of the non-root external vertices.
In other words, since each of the $k!$ possible relabellings of the free variables/external vertices yields a distinct labelled object (a property known as \emph{rigidity}), the variable $u$ in $\lamOpenOGF(z,u)$ may be interpreted as either of exponential type (when tracking variables in linear lambda terms or non-root external vertices in open rooted trivalent maps) or of ordinary type (when counting 1-valent vertices in vertex-rooted (1,3)-maps).
See \Cref{fig:openLinearMap} for an example making this correspondence more concrete.

\begin{figure}[h]
  \centering
  \includegraphics[scale=2]{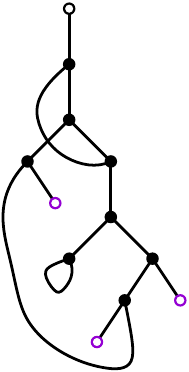}
  \caption{Example of a vertex-rooted (1,3)-valent map corresponding to the underlying map of the linear term $\lambda x.\lambda y.(((a ((y c) b)) (\lambda z.z)) x)$.  Since the 1-valent vertices are unlabelled (compare with the open rooted trivalent maps in \Cref{fig:boundary}), the map captures the structure of the $\lambda$-term up to permutation of the free variables ($a,b,c$).}
  \label{fig:openLinearMap}
\end{figure}

\begin{theorem}\label{lemma:univalentVertexDist}
Let $\chi_{uni}$ be the combinatorial parameter corresponding to the number of 1-valent vertices in unrooted (1,3)-maps $\oneThreeMaps$. Let, also, $X^{uni}_n$ be the random variable corresponding to $\chi_{uni}$ taken over $\oneThreeMaps_n$. Then the mean and variance of $X^{uni}_n$ are asymptotically $\mu_n = \sigma^2_n = \sqrt[3]{2n}$ and the standardised random variables converge to a Gaussian law:

\begin{equation}
\frac{X^{uni}_n - \mu_n}{\sqrt{\sigma^2_n}} \convergesInDist \mathcal{N}(0,1)
\end{equation}
\end{theorem}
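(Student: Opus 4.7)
My plan is to unpeel the compositional layers in the EGF of $\oneThreeMaps$ and reduce the question to a classical saddle-point analysis of a simple permutation model. Let $G(z,u) := \bigl(\exp(h^2/2) \odot \exp(h^3/3 + uh)\bigr)\bigr\rvert_{h=z^{1/2}}$ denote the EGF of $\discoOneThreeMaps$ with $u$ marking $1$-valent vertices, so that $\ln G(z,u)$ is the EGF of $\oneThreeMaps$. Writing $G(z,u) = 1 + \tilde G(z,u)$ with $\tilde G(0,u) = 0$, I apply the composition schema \Cref{thm:benderDistr} with $F(z,u,y) = \ln(1+y)$: the hypothesis $f_{0,1}(u) \equiv 1$ holds, $F(z,1,y)$ is analytic at $y=0$, and the rapid-growth hypothesis for $\tilde G(z,1)$ is inherited from the $(2n-1)!!$ factor contributed by the involution component. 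Hence the limit law of $\chi_{uni}$ on $\oneThreeMaps_n$ coincides with its limit law on $\discoOneThreeMaps_n$.

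Next, the Hadamard product splits coefficient-wise as
\[
  [z^n] \tilde G(z,u) \;=\; [h^{2n}]\bigl(\exp(h^2/2) \odot \exp(h^3/3 + uh)\bigr) \;=\; (2n-1)!! \cdot [h^{2n}]\exp(h^3/3 + uh),
\]
since $(2n)![h^{2n}]\exp(h^2/2) = (2n-1)!!$ does not depend on $u$. This common factor cancels between numerator and denominator, and the PGF of $\chi_{uni}$ at size $n$ reduces to $p_n(u) = [h^{2n}]\exp(h^3/3+uh)/[h^{2n}]\exp(h^3/3+h)$, which is precisely the PGF of the number of fixed points $K$ in a uniform random permutation of $[2n]$ whose cycle type consists only of $3$-cycles and fixed points.

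The remaining problem is amenable to the saddle-point method applied to the entire function $h \mapsto \exp(h^3/3 + uh)$. The saddle of the integrand $h^{-N-1}\exp(h^3/3 + uh)$ (with $N = 2n$) lies at $h^*(u)$ satisfying $(h^*)^3 + uh^* = N$, so $h^*(1) = N^{1/3}(1 + O(N^{-2/3}))$. Setting $\Phi(u) := \log[h^N]\exp(h^3/3 + uh)$, the envelope-theorem computation $\Phi'(u) = h^*(u)$ (up to lower-order prefactor corrections) yields $\mu_n = \Phi'(1) \sim (2n)^{1/3}$; combined with the identity $\mathrm{Var}[K] = \Phi''(1) + \Phi'(1)$ and $\Phi''(1) = h^{*\prime}(1) = O(N^{-1/3})$, this gives $\sigma_n^2 \sim (2n)^{1/3}$. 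The Gaussian limit then follows by substituting $u = e^{it/\sigma_n}$ into the saddle-point asymptotics for $\Phi$: the first-order expansion in $t$ cancels the recentering $e^{-it\mu_n/\sigma_n}$, the second-order term supplies the Gaussian factor $e^{-t^2/2}$, and higher-order terms are $O(\sigma_n^{-1}) = o(1)$ since $h^*(u)$ depends smoothly on $u$ near $1$. Levy's continuity theorem then delivers the announced convergence.

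The main technical obstacle is establishing uniform error control in the saddle-point expansion over compact ranges of $t$, complicated by the mod-$3$ periodicity $k \equiv N \pmod{3}$ in the generating function. A clean way to handle this is to verify Hayman-admissibility of $\exp(h^3/3 + uh)$ uniformly in $u$ over a neighbourhood of $1$, which supplies both the required Gaussian coefficient estimates and the smoothness of $h^*(u)$ needed to control the higher-order remainders.
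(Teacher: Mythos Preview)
Your proposal is correct and follows essentially the same route as the paper: both apply \Cref{thm:benderDistr} with $F(z,u,y)=\ln(1+y)$ to reduce $\oneThreeMaps$ to $\discoOneThreeMaps$, then exploit the Hadamard-product factorisation so that the $u$-dependence lives entirely in $[h^{2n}]\exp(h^3/3+uh)$, which is handled by a saddle-point analysis. The only cosmetic differences are that the paper packages the saddle-point step via e-admissibility in the sense of \cite{drmota2005extended} (your ``uniform Hayman-admissibility'' is the same device) and concludes via the quasi-powers theorem rather than a direct L\'evy-continuity argument on characteristic functions.
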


An application of \Cref{lemma:rootingDoesntAffectDist} and a simple change of variables $n \mapsto n-2$ yields.
\begin{corollary}\label{lemma:freeVarsInLinearLams}
  Let $\chi_{free}$ be the combinatorial parameter of exponential type corresponding to the number of non-root external vertices in open rooted trivalent maps and the number of free variables in open linear $\lambda$-terms.
  Then for $\mu_n = \sigma^2_n = \sqrt[3]{2(n-2)}$, the random variables $X^{free}_n$ corresponding to $\chi_{free}$ taken over $\openTerms_n$, properly standardised, converge to a Gaussian law:

\begin{equation}
\frac{X^{free}_n - \mu_n}{\sqrt{\sigma^2_n}} \convergesInDist \mathcal{N}(0,1)
\end{equation}

\end{corollary}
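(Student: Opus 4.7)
The plan is to invoke the composition schema (\Cref{thm:benderDistr}) to reduce from the connected case to the disconnected case, and then carry out a direct mode / saddle-point analysis of the distribution of univalent vertices in the disconnected model.

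Concretely, denote by $A(h,u) := \exp(h^2/2) \odot \exp(h^3/3 + uh)$ the EGF of disconnected $(1,3)$-valent maps from \eqref{eq:disco13Valent}, so that the connected EGF is $C(h,u) = \log A(h,u)$. I would apply \Cref{thm:benderDistr} with outer function $F(z,u,y) := \log(1+y)$ and inner function $G(z,u) := A(z,u) - 1$: the outer function is analytic at $y=0$ with $f_{0,1}(1)=1$ and independent of $u$, while the coefficient $g_N(1) = (N-1)!! \cdot N! \cdot [h^N]\exp(h^3/3+h)$ satisfies the rapid-growth hypothesis \eqref{eq:benderGnCondition}. Indeed, the factorials alone contribute $O(N^{-3})$ to the ratio $g_{N-2}(1)/g_N(1)$, and a routine saddle-point estimate on $\exp(h^3/3+h)$ (whose saddle $h_c$ satisfies $h_c^3+h_c=N$) contributes an additional $h_c^{2} \sim N^{2/3}$ factor, giving $g_{N-2}(1)/g_N(1) = O(N^{-7/3})$ and hence $\sigma \geq 7/3$. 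The periodicity ($g_N(1)=0$ for odd $N$) is handled as in the Poisson schema of \Cref{subsec:poissonSchema} by restricting attention to the appropriate residue class.

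Once the reduction is achieved, the distribution of univalent vertices on disconnected $(1,3)$-maps with $N$ half-edges has probability generating function
$$
  p_N(u) \;=\; \frac{[h^N] A(h,u)}{[h^N] A(h,1)} \;=\; \frac{f_N(u)}{f_N(1)}, \qquad f_N(u) := [h^N] \exp(h^3/3 + uh),
$$
since the Hadamard factor $(N-1)!!$ does not depend on $u$. Expanding explicitly gives
$$
  f_N(u) \;=\; \sum_{3j + k = N} \frac{u^k}{k!\, 3^j\, j!},
$$
so that the univalent count $X_N$ is supported on $\{k \geq 0 : k \equiv N \pmod 3\}$ with $P(X_N = k) \propto (k!\, 3^j\, j!)^{-1}$, where $j = (N-k)/3$. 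The ratio of neighbouring probabilities
$$
  \frac{P(X_N = k+3)}{P(X_N = k)} \;=\; \frac{N-k}{(k+1)(k+2)(k+3)}
$$
equals one precisely at the mode $k_0 \sim N^{1/3}$, which coincides with the saddle $h_c$ of $f_N$ defined by $h_c^3 + h_c = N$. A quadratic expansion of $\log P(X_N = k_0 + 3\ell)$ in $\ell$ produces the Gaussian envelope $\exp\bigl(-(k - k_0)^2/(2 k_0)\bigr)$, yielding convergence in law of the standardised variable to $\mathcal{N}(0,1)$ with $\mu_N \sim \sigma_N^2 \sim k_0 \sim N^{1/3}$. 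Substituting $N = 2n$ recovers $\mu_n = \sigma_n^2 = (2n)^{1/3}$ as required.

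The main technical obstacle is upgrading the pointwise local Gaussian estimate to uniform convergence of the rescaled characteristic function $p_N(e^{it/\sigma_N}) e^{-it\mu_N/\sigma_N}$, which is needed to invoke Levy's continuity theorem. This uniformity is obtained through a saddle-point analysis of $f_N(u)$ for complex $u$ on a shrinking neighbourhood of the unit circle: the relevant saddle $h_c(u)$ solves $h^3 + uh = N+1$, remains well-separated from spurious critical points, and admits the classical Hayman-admissibility machinery applied to $\exp(h^3/3 + uh)$. Once Gaussianity is established in the disconnected model, it transports automatically to the connected case via \Cref{thm:benderDistr}, and then to the rooted statement \Cref{lemma:freeVarsInLinearLams} via \Cref{lemma:rootingDoesntAffectDist} together with the $n \mapsto n - 2$ size shift from the bijection of \Cref{fig:rootings}.
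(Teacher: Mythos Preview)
Your proposal is correct and follows essentially the same route as the paper: reduce from connected to disconnected $(1,3)$-maps via \Cref{thm:benderDistr} with $F(z,u,y)=\log(1+y)$, analyse the disconnected distribution by a saddle-point argument on $\exp(h^3/3+uh)$, and then transfer to the rooted setting via \Cref{lemma:rootingDoesntAffectDist} with the $n\mapsto n-2$ shift.

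Two small remarks. First, your formula $g_N(1)=(N-1)!!\cdot N!\cdot[h^N]\exp(h^3/3+h)$ carries a spurious $N!$: the ordinary coefficient of the Hadamard product is $[h^N]A=(N-1)!!\cdot[h^N]\exp(h^3/3+h)$, so the correct growth exponent is $\sigma=1/3$ (as the paper records) rather than $7/3$. This does not affect the applicability of the schema. Second, for the disconnected analysis the paper packages the uniform-in-$u$ saddle-point estimate as e-admissibility in the sense of \cite{drmota2005extended} (\Cref{expzthree}) and then invokes the quasi-powers theorem, whereas you frame it as a direct local-CLT computation backed by Hayman-type admissibility; these are equivalent formalisations of the same analytic step.
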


\begin{figure}[h]
  \centering
  \includegraphics[scale=0.7]{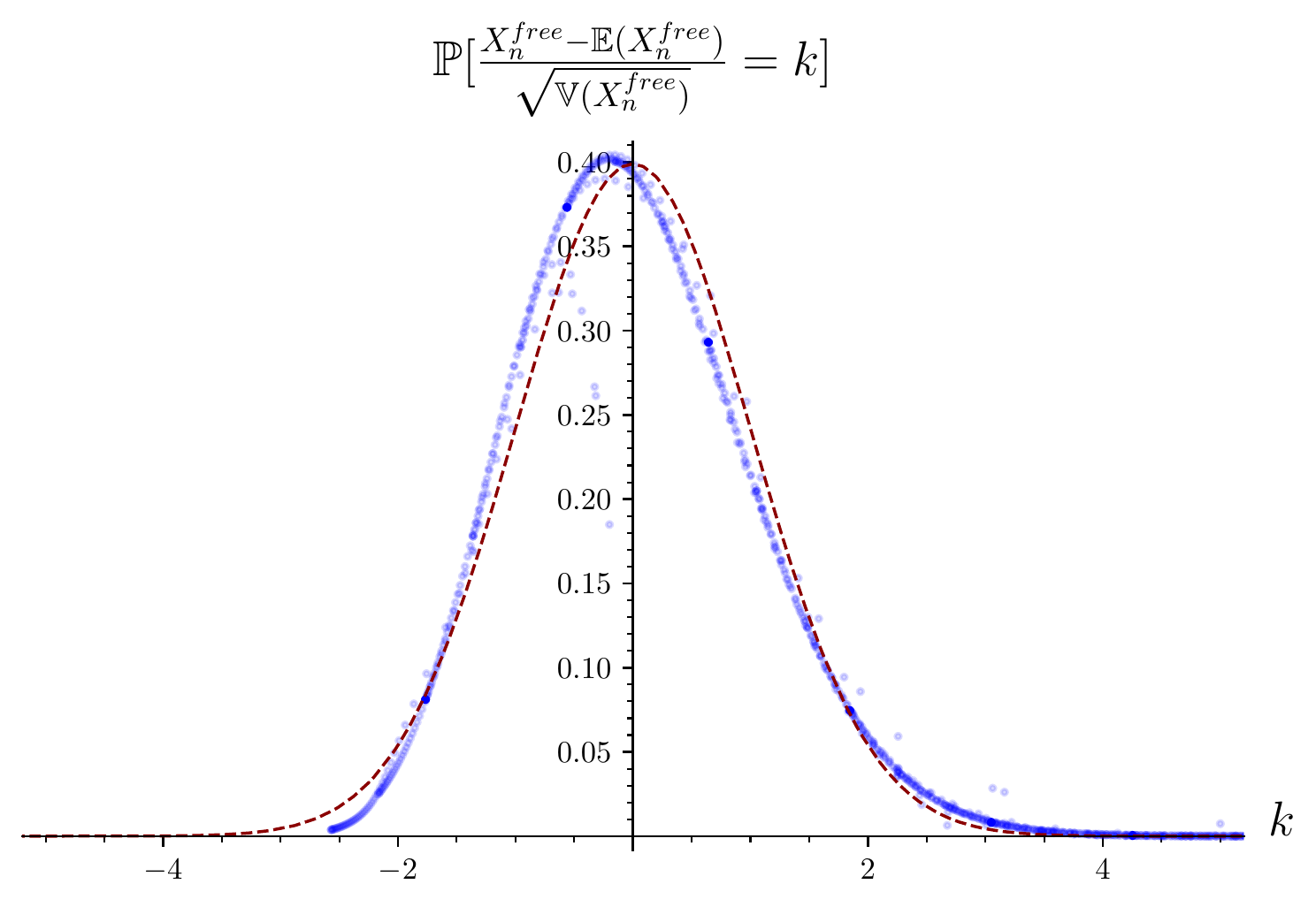}
  \caption{Overlayed density plots of standardized $X^{free}_n$ for $n = 2 \dots 100$ along with that of $\mathcal{N}(0,1)$ in red.}
  \label{fig:distPlotVars}
\end{figure}

Our plan is to first determine the probability generating function for the number of degree 1 vertices in large random maps in $\discoOneThreeMaps$. We will then make use of \Cref{thm:benderDistr,lemma:rootingDoesntAffectDist} to obtain the analogous result for $\openTerms$.

Exploiting the structure of exponential Hadamard products, we obtain the result for $\discoOneThreeMaps$ by combining the coefficient asymptotics of $\exp(z^3/3 + uz)$ and $\exp(z^2/2)$, as presented in the following lemmas.

\begin{lemma}\label{expzthree}
We have that, as $n$ tends to infinity,
\begin{equation}
[z^n] \exp{\left(\frac{z^3}{3} + uz\right)} = \exp{\left({un^{\frac{1}{3}} + \frac{n}{3}}\right)} ~ n^{-\frac{n}{3}} 
\left(
\frac{1}{\sqrt{6n\pi}} - \frac{u^2}{6\sqrt{6\pi}n^{5/6}} + O\left( \frac{1}{n^{7/6}}\right)
\right)
\end{equation}
\end{lemma}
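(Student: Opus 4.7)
The natural route is a saddle-point analysis of the Cauchy integral representation. Writing $z = re^{i\theta}$ with $r = n^{1/3}$, and absorbing the innocuous $1/r$ factor coming from $z^{n+1}$ versus $z^n$ into a lower-order constant, one obtains
\begin{equation*}
  [z^n]\exp(z^3/3+uz) = \frac{1}{2\pi n^{n/3}}\int_0^{2\pi} e^{\psi(\theta)}\,d\theta,\qquad \psi(\theta) := \tfrac{n}{3}e^{3i\theta} + un^{1/3}e^{i\theta} - in\theta.
\end{equation*}
The scaling $r = n^{1/3}$ is chosen precisely so that the stationary-phase condition $\psi'(\theta) = 0$ reduces to the clean equation $e^{3i\theta} + (u/n^{2/3})e^{i\theta} = 1$, whose three roots $\theta^*_0, \theta^*_1, \theta^*_2$ perturb the three cube roots of unity for large $n$.

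Next I locate the dominant saddle $\theta^*_0$ near $\theta = 0$ and expand the phase there. Writing $w_0 = e^{i\theta^*_0}$, a perturbative solution of the saddle equation yields $w_0 = 1 - u/(3n^{2/3}) + O(n^{-2})$ (the would-be $n^{-4/3}$ contribution happens to cancel), and consequently $i\theta^*_0 = \log w_0 = -u/(3n^{2/3}) - u^2/(18 n^{4/3}) + O(n^{-2})$. Using the saddle identity $nw_0^3 + un^{1/3}w_0 = n$, the value of the phase simplifies to $\psi(\theta^*_0) = n/3 + \tfrac{2}{3}un^{1/3}w_0 - in\theta^*_0$, and assembling the expansions gives
\begin{equation*}
  \psi(\theta^*_0) = \frac{n}{3} + un^{1/3} - \frac{u^2}{6n^{1/3}} + O(n^{-1}).
\end{equation*}
A direct differentiation (again using the saddle identity) yields $-\psi''(\theta^*_0) = 3n - 2un^{1/3}w_0 = 3n\bigl(1 + O(n^{-2/3})\bigr)$.

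Finally I apply the standard Laplace estimate at $\theta^*_0$. For $u$ in a neighbourhood of the positive real axis---which covers the regime $u$ close to $1$ needed in the applications to characteristic functions further down---the two other saddles satisfy $\operatorname{Re}(\psi(\theta^*_j)) < \operatorname{Re}(\psi(\theta^*_0)) - c\,n^{1/3}$ for some $c > 0$, so their contributions are exponentially negligible. The leading Laplace term then evaluates to
\begin{equation*}
  \frac{1}{2\pi n^{n/3}}\cdot e^{\psi(\theta^*_0)}\sqrt{\frac{2\pi}{-\psi''(\theta^*_0)}}\bigl(1 + O(n^{-2/3})\bigr) = \frac{e^{n/3+un^{1/3}}\,e^{-u^2/(6n^{1/3})}}{n^{n/3}\sqrt{6\pi n}}\bigl(1 + O(n^{-2/3})\bigr),
\end{equation*}
and expanding $e^{-u^2/(6n^{1/3})} = 1 - u^2/(6n^{1/3}) + O(n^{-2/3})$ together with the identity $(n^{1/3}\sqrt{n})^{-1} = n^{-5/6}$ produces exactly the claimed asymptotic series. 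The main technical step is a uniform decay bound on $\operatorname{Re}\,\psi$ along the contour away from $\theta^*_0$, justifying both the error term in the Laplace approximation and the discarding of the subdominant saddles; this is standard but is where the real estimates happen.
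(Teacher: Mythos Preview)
Your proposal is correct and is essentially the same saddle-point analysis the paper carries out; the only difference is packaging. The paper appeals to the e-admissibility framework of Drmota--Gittenberger \cite{drmota2005extended}: it verifies that $\exp(z^3/3+uz)$ is e-admissible, solves the saddle equation $z\partial_z\log f = n$ for the real saddle $\zeta_n = n^{1/3} - u/(3n^{1/3}) - 1/(3n^{2/3}) + O(n^{-4/3})$, and substitutes into the ready-made formula $f_n \sim f(\zeta_n,u)/(\zeta_n^{n+1}\sqrt{2\pi b(\zeta_n,u)})$, which encapsulates the localisation and tail estimates you flag as ``the main technical step.'' Your direct Cauchy-integral computation with $r=n^{1/3}$ and a complex saddle $\theta_0^*$ is the same calculation unrolled (note $n^{1/3}e^{i\theta_0^*}=n^{1/3}w_0=\zeta_n$), and your explicit tracking of the $-u^2/(6n^{1/3})$ term in $\psi(\theta_0^*)$ is exactly what produces the second-order term in the statement. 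One small wrinkle: your remark about subdominant saddles being exponentially suppressed requires $\operatorname{Re} u>0$ (at $u=0$ all three saddles contribute equally, enforcing the $3\mid n$ periodicity), which is fine for the applications near $u=1$ but worth stating.
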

\begin{proof}
We carry out a saddle-point analysis of the e-admissible, in the sense of \cite{drmota2005extended}, function ${\exp(z^3/3 + uz)}$. Let
\begin{align*}
   a(z,u) = z \frac{\partial_z f(z,u)}{f(z,u)},\\
   b(z,u) = z \partial_z a(z,u)
\end{align*}
Then we have that the saddle point is the unique real solution to $a(z,\zeta) = 0$, which can be computed to be

\begin{equation*}
\zeta_n = \frac{1}{6} 
\frac { \left( 108n +12 \sqrt {12u^3 +81n^2} \right) ^{2/3}-12u}{\left( 108n +12 \sqrt {12u^3 +81n^2} \right)^{1/3}}
\end{equation*}
As $n$ tends to infinity we then have, asymptotically, that 

\begin{equation*}
\zeta_n = n^{1/3} - \frac{u}{3n^{1/3}} - \frac{1}{3n^{2/3}} + O\left( \frac{1}{n^{4/3}} \right)
\end{equation*}
Substituting the above into the saddle-point formula of \cite[Theorem 1]{drmota2005extended}
\begin{equation}
 f_{n} \sim \frac{f(\zeta_n, u)}{{\zeta^{n+1}_n} \sqrt{2 \pi b(\zeta_n,u)}} \left( \exp\left(-\frac{(a(\zeta_n, u) - n^2)}{2b(\zeta_n,u)}\right)\right) \sim \frac{f(\zeta_n, u)}{{\zeta^{n+1}_n} \sqrt{2 \pi b(\zeta_n,u)}}
\end{equation}
where $f(z) = \exp(z^3/3 + uz)$,  we get the desired result with the error term being uniform in both $n$ and $u$.
\end{proof}

\begin{lemma}\label{involutions}
We have that
\begin{equation}
[z^n] \exp\left({\frac{z^2}{2}}\right) = \exp\left(\frac{n}{2}\right) n^{-\frac{n}{2}} \left( \frac{1+\exp({-in\pi})}{2\sqrt{n\pi}} +
O\left(\frac{1}{n^{3/2}}\right)
\right)
\end{equation}
\end{lemma}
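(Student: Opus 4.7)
The plan is straightforward: exploit the lacunary structure of $\exp(z^2/2)$ and apply Stirling's formula directly, rather than performing a saddle-point analysis in the style of the previous lemma. Expanding the exponential gives $\exp(z^2/2) = \sum_{k \geq 0} \frac{z^{2k}}{2^k\,k!}$, so $[z^n]\exp(z^2/2)$ equals $\frac{1}{2^{n/2}(n/2)!}$ when $n$ is even and vanishes when $n$ is odd. The prefactor $\frac{1 + \exp(-in\pi)}{2}$ appearing in the target formula is exactly the parity indicator, evaluating to $1$ when $n$ is even (since $e^{-in\pi} = (-1)^n = 1$) and to $0$ when $n$ is odd; thus it suffices to verify the asymptotic in the even case.

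For $n = 2k$, Stirling's formula with one correction term, $k! = \sqrt{2\pi k}\,(k/e)^k\bigl(1 + O(1/k)\bigr)$, gives
$$\frac{1}{2^k\,k!} \;=\; \frac{1}{\sqrt{2\pi k}}\left(\frac{e}{2k}\right)^{\!k}\bigl(1 + O(1/k)\bigr).$$
Substituting $n = 2k$ converts $(e/(2k))^k$ into $e^{n/2}\,n^{-n/2}$ and $\sqrt{2\pi k}$ into $\sqrt{\pi n}$, yielding, for even $n$,
$$[z^n]\exp(z^2/2) \;=\; e^{n/2}\,n^{-n/2}\!\left(\frac{1}{\sqrt{\pi n}} + O\!\left(\frac{1}{n^{3/2}}\right)\right),$$
which, once combined with the parity indicator $(1+e^{-in\pi})/2$, reproduces the stated formula uniformly in $n$.

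There is no serious obstacle here. The only minor care required is in packaging the parity condition into the closed-form factor $(1+e^{-in\pi})/2$ so that a single formula covers both parities of $n$, and in verifying that the multiplicative $O(1/k)$ correction from Stirling translates into an \emph{additive} $O(n^{-3/2})$ correction once the leading factor $e^{n/2}n^{-n/2}/\sqrt{\pi n}$ has been taken out. A sharper expansion could be obtained by carrying more terms of Stirling's series, but the single-term version is already sufficient for the error bound claimed in the statement.
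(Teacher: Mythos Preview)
Your proof is correct and is more elementary than the paper's. The paper proceeds via a saddle-point analysis: writing the Cauchy integral with integrand $\exp\bigl(\tfrac{1}{2}z^2 - (n+1)\ln z\bigr)$, one finds two real saddle points at $\pm\sqrt{n+1}$, and summing their Gaussian contributions produces the parity factor $\tfrac{1+e^{-in\pi}}{2}$ as an interference effect between the two saddles. Your route bypasses this entirely by reading off the closed-form coefficient $\tfrac{1}{2^{k}k!}$ and applying Stirling, which is shorter and requires no contour analysis. What the paper's approach buys is methodological uniformity with the surrounding lemmas (which genuinely need saddle-point or e-admissibility arguments because no closed form is available), and it makes transparent \emph{why} the oscillatory factor appears; your approach buys brevity and avoids any analytic machinery beyond Stirling's series.
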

\begin{proof}
The proof, once again, follows from an application of the saddle-point method. Let $h(z) = 1/2\,{z}^{2}- \left( n+1 \right) \ln  \left( z \right) $. We have that $h'(z)$ has roots at $\sqrt{1+n}$ and $-\sqrt{1+n}$ and so by combining the contributions from each individual saddle-point, we obtain the desired result.
\end{proof}

Let $\chi_{uni,d}$ be the following extension of $\chi_{uni}$ from $\oneThreeMaps$ to $\discoOneThreeMaps$: if a map $m \in \discoOneThreeMaps$ is connected, then $\chi_{uni,d}(m) = \chi_{uni}$, otherwise $\chi_{uni,d} = \sum\limits_{C \text{ is a connected component of } m} \chi_{uni}(C)$. We then have:
\begin{lemma}\label{lemma:freeVarsInDiscos}
The probability generating functions of the random variable $X^{uni,d}_n$ corresponding to $\chi_{uni,d}$ taken over the set $\discoOneThreeMaps_n$ of not-necessarily-connected $(1,3)$-valent maps with $n$ edges is, for large n,
\begin{equation}
p_{X^{uni,d}_n}(u) = \exp\left({(u-1) (2n)^{1/3}}\right) \left(1 + \frac{1-u^2}{6(2n)^{1/3}} + O\left(\frac{1}{n^{2/3}}\right) \right) 
\end{equation}
\end{lemma}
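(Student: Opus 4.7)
The plan is to compute $p_{X^{uni,d}_n}(u)$ directly from the Hadamard-product expression $\exp(h^2/2)\odot\exp(h^3/3+uh)$ for the bivariate EGF of $\discoOneThreeMaps$, derived just above in \Cref{eq:disco13Valent}. Since an $n$-edge $(1,3)$-valent map has $2n$ half-edges, the coefficients we need are those of $h^{2n}$. Unfolding the definition of the exponential Hadamard product yields
\[
[h^{2n}]\bigl(\exp(h^2/2)\odot\exp(h^3/3+uh)\bigr) \;=\; (2n)!\cdot[h^{2n}]\exp(h^2/2)\cdot[h^{2n}]\exp(h^3/3+uh),
\]
so that in forming the ratio defining $p_{X^{uni,d}_n}(u) = p_n(u)/p_n(1)$, the entire factor $(2n)!\cdot[h^{2n}]\exp(h^2/2)$ cancels between numerator and denominator. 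It therefore suffices to estimate the ratio $[h^{2n}]\exp(h^3/3+uh)\,/\,[h^{2n}]\exp(h^3/3+h)$ asymptotically.

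Next I would invoke \Cref{expzthree} with index $2n$, applied in the numerator at the marker $u$ and in the denominator at $u=1$, using that the saddle-point expansion obtained there is uniform in $u$ in any bounded region (since the saddle $\zeta_n(u)$ depends analytically on $u$). The exponential prefactors $\exp\bigl(u(2n)^{1/3}+2n/3\bigr)$ and $\exp\bigl((2n)^{1/3}+2n/3\bigr)$ combine into $\exp\bigl((u-1)(2n)^{1/3}\bigr)$, while the algebraic prefactor $(2n)^{-2n/3}/\sqrt{12n\pi}$ (obtained after factoring $1/\sqrt{12n\pi}$ out of the bracket of \Cref{expzthree}, using the identity $\sqrt{12n\pi}/\sqrt{6\pi}=\sqrt{2n}$) is identical on top and bottom and so cancels exactly.

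It then remains to compare the second-order correction brackets. After the factorization above, the correction appearing in $[h^{2n}]\exp(h^3/3+uh)$ reads $1-\dfrac{u^2}{6(2n)^{1/3}}+O(n^{-2/3})$, and dividing by the corresponding factor at $u=1$ and expanding $(1-x)^{-1}=1+x+O(x^2)$ with $x=\Theta(n^{-1/3})$ produces
\[
\left(1-\frac{u^2}{6(2n)^{1/3}}+O(n^{-2/3})\right)\left(1+\frac{1}{6(2n)^{1/3}}+O(n^{-2/3})\right) \;=\; 1+\frac{1-u^2}{6(2n)^{1/3}}+O(n^{-2/3}),
\]
giving the claimed formula. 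The only real technical obstacle is careful bookkeeping of the various powers of $2n$ arising from the half-edge-to-edge conversion, together with checking the uniformity in $u$ of the error in \Cref{expzthree}; the latter is essential for the subsequent proof of \Cref{lemma:univalentVertexDist}, where the PGF will be evaluated at $u=e^{i\tau}$ in order to invoke Lévy's continuity theorem.
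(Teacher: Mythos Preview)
Your proof is correct and follows essentially the same route as the paper: expand the Hadamard product as a product of coefficients, cancel the $u$-independent factor, and apply \Cref{expzthree} at index $2n$ to numerator and denominator. Your write-up is in fact more explicit than the paper's (which merely cites \Cref{expzthree} and \Cref{involutions} and the shift $n\mapsto 2n$), and you correctly observe that the contribution from $\exp(h^2/2)$ cancels in the ratio so that \Cref{involutions} is not actually needed here.
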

\begin{proof}
We have
\begin{equation*}
p_{X_n}(u)  =
\frac{[z^n] \left( \exp\left(z^2/2\right) \odot \exp\left(z^3/3 + uz\right)\right)}
{[z^n] \left( \exp\left(z^2/2\right) \odot \exp\left(z^3/3 + z\right) \right)} 
=\frac{n! \left( [z^n] \exp\left(z^2/2\right) \cdot [z^n] exp\left(z^3/3 + uz\right) \right)}
{n! \left( [z^n] \exp\left(z^2/2\right) \cdot [z^n] exp\left(z^3/3 + z\right) \right)} 
\end{equation*} 

Where $X_n$ is the random variable corresponding to the number of degree 1 vertices in not-necessarily-connected (1,3)-maps counted by {\em number of half-edges}.
An application of \Cref{expzthree} and \Cref{involutions} then yields the desired asymptotic result after a shift of $n \mapsto 2n$.
\end{proof}

We can now proceed with a proof of \Cref{lemma:univalentVertexDist}.
\begin{proof}[Proof of \Cref{lemma:univalentVertexDist}]

Note that $f(z,u,g(z,u))$ with $f(z,u,y) = \ln(1+y)$ and 
\begin{equation}
g(z,u) = \left. \left(  \exp(h^2/2) \odot \exp(h^3/3 + uh) \right)\right \lvert_{h = z^{1/2}} - 1,
\end{equation}
is aperiodic and furthermore, by \Cref{expzthree,involutions}, 
\begin{equation}
\frac{\overline{g}_{n-1}}{\overline{g}_{n}} = O\left(n^{-1/3}\right)
\end{equation}
where $\overline{g}_n = [z^n] g(z,1)$. Therefore the composition $f(z,u,g(z,u))$ falls under the schema presented in \Cref{thm:benderDistr}. Therefore we can apply said lemma to conclude that the limit distribution of $X^{uni}_{n}$ is the same as that of $X^{uni,d}_{n}$.

The generating function of $X^{uni,d}_{n}$, as given by \Cref{lemma:freeVarsInDiscos}, is

\begin{equation}
p_{X^{uni,d}_{n}}(u) = \exp({(u-1) (2n)^{1/3}}) \left(1 + \frac{1-u^2}{6(2n)^{1/3}} + O\left(\frac{1}{n^{2/3}}\right) \right) 
\end{equation}
which is of the form required to apply the quasi-powers theorem, yielding our final result.
\end{proof}

\begin{remark}
The above asymptotic form of  $p_{X^{uni}_n}(u)$ suggests that a $Poisson((2n)^{1/3})$ approximation might be more appropriate (in the sense of a faster speed of convergence).

It is of interest to note here that there is a fair number of studies (see, for example, \cite{ercolani2014cycle,benaych2007cycles,yakymiv2007random}) on the structure of random permutations with restricted cycle lengths, which are quite relevant to this and the following subsections.

In \cite{benaych2007cycles} it is shown that if $A \subseteq \mathbb{N}^{+}$ is an infinite set of allowed cycle lengths, then for all $l \geq 1$, the joint distribution of the random vector 
\begin{equation}
  \left( N_{k}(\sigma_n) \right)_{1 \leq k \leq l \land k \in A}
\end{equation} 
converges weakly, as $n \rightarrow \infty$, to
\begin{equation}
\mathop{\otimes}\limits_{1 \leq k \leq l \land k \in A} Poisson(1/k)
\end{equation}

Of course, this is not possible when $A$ is finite. In such a situation, the author of \cite{benaych2007cycles} shows that, for $d = \max{A}$ and for all $l \in A$, $\frac{N_l(\sigma_n)}{n^{l/d}}$ converges in all $L^p$ spaces, with $p \in [1, +\infty)$, to $1/l$ and remarks that an approach based on analytic combinatorics may useful in obtaining the limit distributions of dilations of the random variables $N_l(\sigma_n)/n^{l/d} - 1/l$. Indeed, our approach yields such limit laws for the cases where $A = \{1,2\}$ or $A = \{1,3\}$ and {\em in principle} can be extended to any finite $A$, since in all such cases the resulting functions are e-admissible and therefore admit Gaussian limit laws as shown in \cite{gittenberger2006hayman}.
\end{remark}

\subsection{Distribution of degree 2 vertices in $\twoThreeMaps$ and unused abstractions in $\affineTerms$}\label{subsec:freeLams}
In this final subsection we show that the limit distributions of vertices of degree 2 in (2,3)-valent maps and rooted (2,3)-valent maps, as well as the limit distribution of unused abstractions in affine $\lambda$-terms, all admit a Gaussian law. 

So far we have studied families of $\lambda$-terms which satisfy a condition of linearity: a bound variable must appear exactly once inside the body of its corresponding abstraction. By relaxing this condition, we obtain the notion of {\em affine $\lambda$-terms}, in which bound variables may occur {\em at most once} inside the body of their abstractions.
Combinatorially, such terms may be obtained from linear $\lambda$-terms by replacing every vertex in their syntactic diagrams 
by a non-empty path. We therefore have the following equality between the generating functions $\affineTermsOGF$ of closed affine terms and $\lamClosedOGF$ of closed linear terms:

\begin{equation}\label{eq:pathSubsAffineSymb}
\affineTerms = \closedTerms(SEQ_{\geq 1} \singleton)
\end{equation}

In terms of maps, \Cref{eq:pathSubsAffineSymb} signifies a passing from rooted trivalent maps to elements of $\rootedTwoThreeMaps \times \pathClass$: rooted $(2,3)$-valent maps $m$ together with a path $p \in P_n$, which we can visualise as grafted on the root of $m$ as in \Cref{fig:affineMap}. 

\begin{figure}[h]
  \centering
  \includegraphics[scale=2]{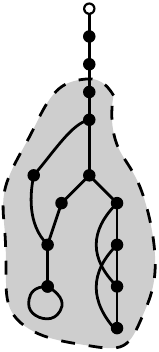}
  \caption{An element of $\rootedTwoThreeMaps \times \pathClass$ corresponding to the $\lambda$-term $\lambda x.\lambda y.(\lambda z.\lambda a.\lambda b.b a)(\lambda w.((\lambda d.d)(\lambda e.y)))$. The rooted $(2,3)$-valent submap corresponding to the element of $\rootedTwoThreeMaps$ is highlighted in grey.}
  \label{fig:affineMap}
\end{figure}

Translating \Cref{eq:pathSubsAffineSymb} to an equality between generating functions and marking unused abstractions/vertices of degree 2 with $t$ we have

\begin{equation}\label{eq:pathSubsAffine}
\affineTermsOGF(z, t) = \lamClosedOGF\left(\frac{z}{1-tz}\right),
\end{equation}
from which, by extracting coefficients, we obtain

\begin{equation}\label{eq:affineBinom}
a_n =  \sum\limits_{k = 0}^{n} t^k {i-1 \choose k} l_{k+1}  = \sum\limits_{k = 0}^{\lceil \frac{n}{3} \rceil} t^{n-1-(3k+1)}{i-1 \choose 3k+1} l^{*}_{k}
\end{equation}
where $a_n = [z^n] \affineTermsOGF, l_n = [z^n] \lamClosedOGF(z), l^*_n = l_{3n+2}$. The above recurrence can be used to derive quickly, if a bit informally, a lower bound to the asymptotic mean for the number of unused abstractions and vertices of degree 2. Let $S(n,k) = {i-1 \choose 3k+1} l^{*}_{k}$ be the summand of \Cref{eq:affineBinom}, evaluated $t=1$. We note that $S(n,k)$ is convex and so we may estimate, asymptotically as $n \rightarrow \infty$, the index $k$ for which it is maximum by solving the following equation for $k$

\begin{equation}\label{eq:discreteSaddlepoint}
\frac{S(n,k+1)}{S(n,k)} = 1
\end{equation}

Plugging $l^{*}_k \sim \frac{3}{\pi} 6^k k!$ into \Cref{eq:discreteSaddlepoint} we obtain, asymptotically, a solution of the form

\begin{equation}
k_m \sim \frac{n}{3} - \frac{(2n)^{2/3}}{6} + \frac{(2n)^{1/3}}{9} - \frac{19}{18}.
\end{equation}

Doing the same for $S'(n,k) = (n-1-(3k+1)) {i-1 \choose 3k+1} l^{*}_{k}$, the summand of the derivative with respect to $t$ of \Cref{eq:affineBinom} evaluated at $t=1$, we obtain an optimal index of the form

\begin{equation}
k_{m}' \sim \frac{n}{3} - \frac{(2n)^{2/3}}{6} + \frac{(2n)^{1/3}}{9} - \frac{25}{18}.
\end{equation}

Therefore we have, for large $n$,

\begin{equation}
\frac{S'(n,k)}{S(n,k)} \geq \frac{S'(n,k_m)}{S(n,k'_m)} \sim \frac{(2n)^{2/3}}{2} - \frac{(2n)^{1/3}}{3} - {2}
\end{equation}

This lower bound is, as we will prove below, quite tight: it is accurate to the first two orders. However, to go from the lower bound above to the precise asymptotic result, one must show that for indices other than $k \neq k_m$ and $k \neq k'_{m}$ the contributions of $S(n,k)$ and $S'(n,k)$ respectively are neglible, uniformly for $t$ in a fixed neighbourhood of $1$. This proves a bit tedious and so we will now seek an alternative specification for $\affineTermsOGF(z,t)$.

Our new specification for the generating function of the class $\affineTerms$, where $t$ again marks unused abstractions, can be obtained by a straightforward extension of the results in \cite{bodini2013asymptotics}:

\begin{align}
  \twoThreeMapsOGF(z,t) &= h \frac{\partial}{\partial h} \left( \ln \left( \exp\left({\frac{h^2}{2}}\right) \odot \exp\left({\frac{h^3}{3} + \frac{th^2}{2}}\right) \right) \right) \label{eq:twoThreeMapsEq} \\
  \affineTermsOGF(z,t) &= \frac{z^2 + z^2 \twoThreeMapsOGF(z^{\frac{1}{2}},t)}{1-tz}  \label{eq:finalSpecAffine}
\end{align}

Starting with \Cref{eq:twoThreeMapsEq} we note that the presence of the term $tz^2/2$ inside the Hadamard product denotes that we allow for, and mark by $t$, vertices of degree 2 in our maps. This yields the ordinary generating function $\twoThreeMapsOGF(z,t)$ of the class $\rootedTwoThreeMaps$ of {\em rooted $(2,3)$-maps} with vertices of degree 2 tagged by $t$. Next, we notice that an affine $\lambda$-term can start with an arbitrary number of abstractions whose bound variable is discarded. In the realm of maps, as discussed before, this corresponds to an element of $\rootedTwoThreeMaps \times \pathClass$: a rooted $(2,3)$-valent map $m$ together with a path $p \in P_n$ (see, again, \Cref{fig:affineMap}). To count such maps we need only multiply the generating function of rooted $(2,3)$-valent maps by the generating function $\frac{1}{1-tz}$ counting paths $P_n$ with $n$ edges and with vertices of degree 2 marked by $t$.

Notice that adding $p \in P_n$ on top of a rooted $(2,3)$-valent map $m$ shifts the value of $\chi_{bi}(m)$ from, say, $\chi_{bi}(m)$, to $\chi_{bi}(m) + n$. As such, one could imagine that maps that are extremal, in the sense that almost all their vertices lie on such path, skew the distribution of the extended $\chi_{bi}$ taken over $[\rootedTwoThreeMaps \times \pathClass]_n$. Thankfuly, the rapid growth of $\rootedTwoThreeMaps$ implies the number of maps in $\lvert \pathClass_{k} \times \rootedTwoThreeMaps_{n-k} \rvert = \lvert \rootedTwoThreeMaps_{n-k} \rvert$ pales in comparison to $\lvert \rootedTwoThreeMaps_n \rvert$ for any $k \geq 1$. Intuitively, this means that almost all maps in $\rootedTwoThreeMaps \times \pathClass$ are plain rooted $(2,3)$-valent maps, i.e., elements of $\rootedTwoThreeMaps$ and so the limit distribution dictated by $\chi_{bi}$ remains the same.

Using \Cref{eq:twoThreeMapsEq,eq:finalSpecAffine} we will now proceed to show the following.

\begin{theorem}\label{lemma:bivalentVertexDist}
Let $\chi_{bi}$ be the combinatorial parameter of $\twoThreeMaps$ corresponding to the number of degree 2 vertices. Let, also, $X^{bi}_n$ be the random variable corresponding to $\chi_{bi}$ taken over $\twoThreeMaps_n$. Then the mean and variance of $X_n$ are asymptotically $\mu_n = \sigma^2_n = \frac{(2n)^{2/3}}{2}$, while the standardised random variables converge to a Gaussian law:

\begin{equation}
\frac{X^{uni}_n - \mu_n}{\sqrt{\sigma^2_n}} \convergesInDist \mathcal{N}(0,1)
\end{equation}
\end{theorem}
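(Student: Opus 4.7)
The plan is to follow the same strategy used in the proof of \Cref{lemma:univalentVertexDist}, adapted to the generating functions appearing in \Cref{eq:twoThreeMapsEq}. First I would establish a saddle-point expansion analogous to \Cref{expzthree}, but for the e-admissible function $\exp(h^3/3 + th^2/2)$ in place of $\exp(h^3/3 + uh)$. The saddle-point equation becomes $\zeta^3 + t\zeta^2 = n$, whose solution admits the expansion
\begin{equation*}
  \zeta_n = n^{1/3} - \tfrac{t}{3} + \tfrac{t^2}{9\,n^{1/3}} + O(n^{-2/3}),
\end{equation*}
uniformly in $t$ in a fixed neighbourhood of $1$. Substituting into the saddle-point formula of \cite[Theorem 1]{drmota2005extended} and collecting terms should yield an expansion of the form
\begin{equation*}
  [h^n]\exp\bigl(\tfrac{h^3}{3} + \tfrac{t\,h^2}{2}\bigr) \;\sim\; \exp\!\bigl(\tfrac{t\,n^{2/3}}{2} + \tfrac{n}{3} - \tfrac{t^2\,n^{1/3}}{6} + O(1)\bigr)\, \frac{n^{-n/3}}{\sqrt{6\pi n}}.
\end{equation*}

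Combining this estimate with \Cref{involutions} via the exponential Hadamard product, applying the half-edge/edge substitution $h=z^{1/2}$ (which induces the shift $n\mapsto 2n$), and cancelling common factors in the ratio, I obtain, analogously to \Cref{lemma:freeVarsInDiscos}, a probability generating function for the natural extension $\chi_{bi,d}$ of $\chi_{bi}$ to the class $\discoTwoThreeMaps$ of the Poisson-like form
\begin{equation*}
  p_{X^{bi,d}_n}(t) \;=\; \exp\!\Bigl((t-1)\,\tfrac{(2n)^{2/3}}{2} - (t^2-1)\,\tfrac{(2n)^{1/3}}{6} + O(1)\Bigr),
\end{equation*}
uniformly for $t$ in a fixed neighbourhood of $1$.

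Next, I would transfer this law from $\discoTwoThreeMaps$ to connected $(2,3)$-valent maps using \Cref{thm:benderDistr} applied to the composition $F(z,t,y)=\ln(1+y)$ with $G(z,t) = \bigl(\exp(h^2/2)\odot\exp(h^3/3 + t\,h^2/2)\bigr)|_{h=z^{1/2}} - 1$. The hypotheses of \Cref{thm:benderDistr} hold: $F(z,1,y)=\ln(1+y)$ is analytic at the origin with $f_{0,1}(1)=1$, and the rapid-growth condition $[z^{n-1}]G(z,1)/[z^n]G(z,1) = O(n^{-2/3})$ follows from the saddle-point expansion above. Passing from unrooted to rooted $(2,3)$-valent maps only introduces a $h\partial_h$ factor, which by \Cref{lemma:rootingDoesntAffectDist} leaves the limit distribution invariant. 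Hence $X^{bi}_n$ and $X^{bi,d}_n$ admit the same limit distribution after standardisation.

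Finally, the explicit form of $p_{X^{bi,d}_n}(t)$ displayed above fits the quasi-powers scheme with dominant quasi-power $\exp((t-1)(2n)^{2/3}/2)$: expanding $\log p_n(e^{i w/\sigma_n}) - i w \mu_n/\sigma_n$ around $t=1$ with $\mu_n=\sigma_n^2=(2n)^{2/3}/2$ shows that the characteristic function of $(X^{bi}_n-\mu_n)/\sigma_n$ converges pointwise to $e^{-w^2/2}$, yielding the Gaussian limit by Lévy's continuity theorem. The principal technical obstacle is the saddle-point computation itself: because $t$ multiplies $h^2$ (rather than $h$, as in \Cref{expzthree}), the saddle point shifts by an order-one constant $-t/3$ instead of an $O(n^{-1/3})$ term, producing a different scaling in $n$ and mixing lower-order contributions into the leading exponential factor. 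Carrying the expansion of $\zeta_n$ to order $n^{-2/3}$ and ensuring uniformity of the error term in a neighbourhood of $t=1$ is what makes both the Poisson-like form of $p_{X^{bi,d}_n}$ and the applicability of \Cref{thm:benderDistr} work.
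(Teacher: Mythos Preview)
Your proposal is correct and follows essentially the same route as the paper: a saddle-point expansion of $\exp(h^3/3+th^2/2)$ combined with \Cref{involutions} to obtain the PGF for the disconnected class (\Cref{lemma:bivalentPGF}), then the composition schema \Cref{thm:benderDistr} with $F=\ln(1+y)$ to pass to connected maps, and finally L\'evy's continuity theorem. One small slip: the rapid-growth ratio $\overline{g}_{n-1}/\overline{g}_n$ is $O(n^{-1/3})$ rather than $O(n^{-2/3})$, and the rooting step you mention is unnecessary since $\twoThreeMaps$ is already the unrooted class, but neither point affects the argument.
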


\begin{figure}[h]
  \centering
  \includegraphics[scale=0.7]{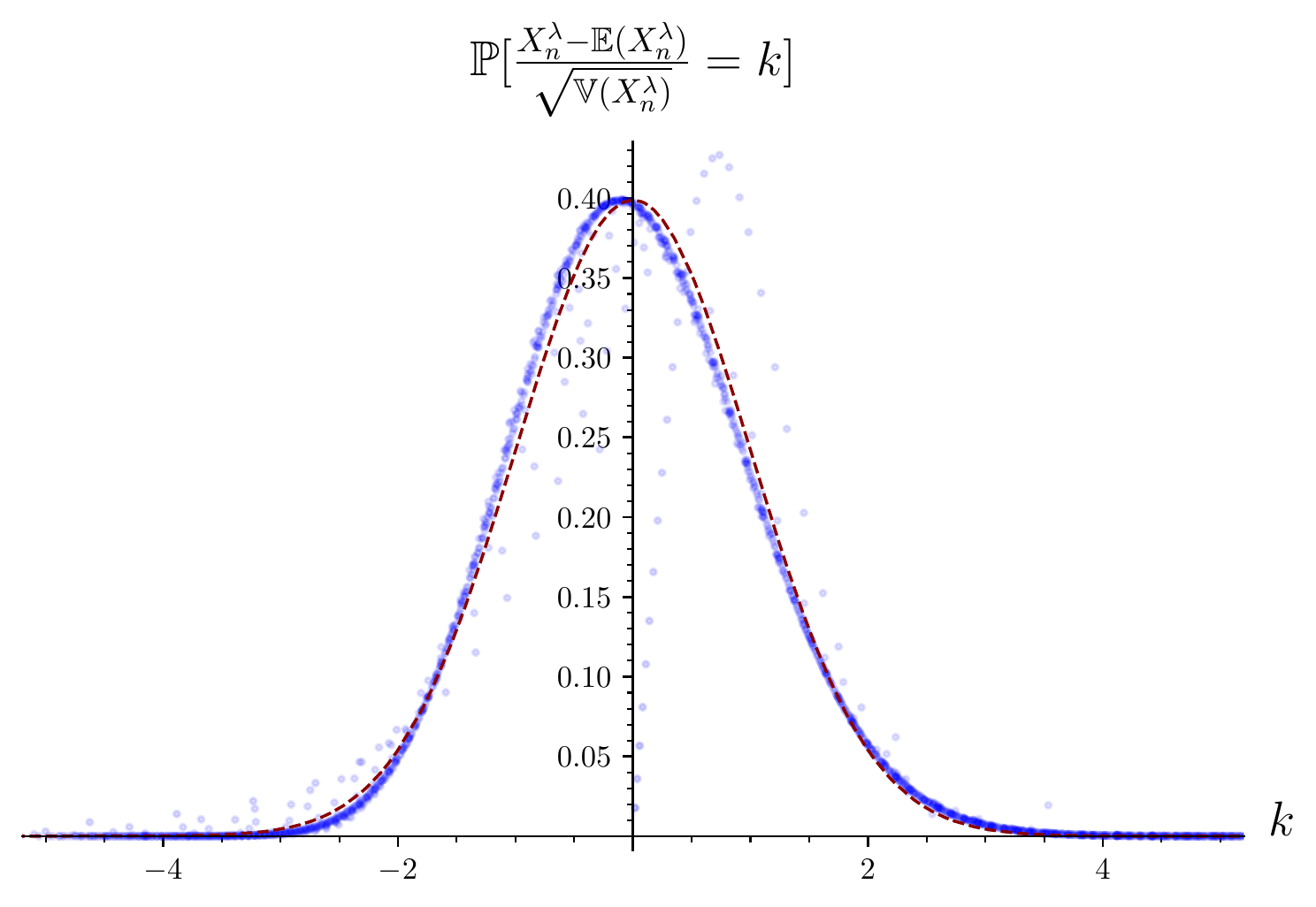}
  \caption{Overlayed density plots of standardized $X^{\lambda}_n$ for $n = 2 \dots 100$ along with $\mathcal{N}(0,1)$ in red.}
  \label{fig:distPlotAffine}
\end{figure}

Once again, we obtain our desired result for $\lambda$-terms as a corollary of the above.
\begin{corollary}\label{lemma:freeLamsInAffineLams}
Let $\chi_{\lambda}$ be the combinatorial parameter of $\affineTerms$ corresponding to to the number of abstractions discarding their variable in closed affine $\lambda$-terms. Then for $\mu_n = \sigma^2_n = \frac{(2(n-2))^{2/3}}{2}$, the standardised random variable $X^{\lambda}$ corresponding to $\chi_{\lambda}$ taken over $\affineTerms_n$ converges to a Gaussian law:

\begin{equation}
\frac{X^{\lambda}_n - \mu_n}{\sqrt{\sigma^2_n}} \convergesInDist \mathcal{N}(0,1)
\end{equation}
\end{corollary}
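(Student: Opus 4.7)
The plan is to transfer the Gaussian limit law established in \Cref{lemma:bivalentVertexDist} for $\chi_{bi}$ over $\twoThreeMaps$ across to $\chi_{\lambda}$ over $\affineTerms$, using the specification \Cref{eq:finalSpecAffine} together with the composition schema of \Cref{thm:benderDistr} and the rooting-invariance of \Cref{lemma:rootingDoesntAffectDist}.

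First I would rewrite \Cref{eq:finalSpecAffine} in the form
\[
\affineTermsOGF(z,t) = z^{2}\cdot F\!\bigl(z,\,t,\,g(z,t)\bigr),\qquad F(z,t,y) := \frac{1+y}{1-tz},\qquad g(z,t) := \twoThreeMapsOGF\!\bigl(z^{1/2},t\bigr),
\]
so that $g(z,t)$ is the OGF of rooted $(2,3)$-valent maps counted by edges, with $t$ marking degree-$2$ vertices. Extracting coefficients, the probability generating function of $X^{\lambda}_n$ becomes
\[
p_n(t) = \frac{[z^{n-2}]\,F(z,t,g(z,t))}{[z^{n-2}]\,F(z,1,g(z,1))}.
\]

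Second, I would apply \Cref{thm:benderDistr} at index $n-2$ with this choice of $F$ and $G = g$. The hypotheses are easily verified: $F(z,1,y)=(1+y)/(1-z)$ is analytic at the origin, the expansion $F(z,t,y)=\sum_{k\ge 0}t^{k}z^{k}(1+y)$ gives $f_{0,1}(t)\equiv 1$ so in particular $f_{0,1}(1)=1$, and the rapid-growth bound $g_{n-1}(1)/g_{n}(1) = O(n^{-\sigma})$ for some $\sigma > 0$ is inherited from the saddle-point estimates of \Cref{expzthree} and \Cref{involutions} that power the proof of \Cref{lemma:bivalentVertexDist}. The conclusion of the theorem is that $X^{\lambda}_n$ shares the same limit law as the random variable $Y_{n-2}$ whose pgf is $g_{n-2}(t)/g_{n-2}(1)$, namely the variable counting degree-$2$ vertices over \emph{rooted} $(2,3)$-valent maps of size $n-2$. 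Then, by \Cref{lemma:rootingDoesntAffectDist}, $Y_{n-2}$ has the same limit distribution as $X^{bi}_{n-2}$ taken over the unrooted class $\twoThreeMaps_{n-2}$, and \Cref{lemma:bivalentVertexDist} supplies Gaussian convergence of the standardised variable with $\mu_{n-2} = \sigma^{2}_{n-2} = (2(n-2))^{2/3}/2$, which is exactly the claimed corollary.

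The main technical obstacle is the clean verification of the rapid-growth ratio for $g(z,1)$. Since $g(z,1)$ is built from the half-edge OGF $\twoThreeMapsOGF(h,1) = h\partial_{h}\ln\bigl(\exp(h^{2}/2)\odot\exp(h^{3}/3+h^{2}/2)\bigr)$ via the substitution $h = z^{1/2}$, obtaining $g_{n-1}(1)/g_{n}(1) = O(n^{-2/3})$ requires chaining the saddle-point asymptotics of \Cref{expzthree,involutions} through the Hadamard product, through the logarithm (passing from not-necessarily-connected to connected via Bender's theorem), and through the half-edge pointing operator $h\partial_{h}$. However, all of these are carried out in parallel fashion in the proof of \Cref{lemma:univalentVertexDist} in \Cref{subsec:freeVars}; once they are transcribed to the $(2,3)$-valent setting, the needed ratio estimate drops out and the rest of the argument is bookkeeping.
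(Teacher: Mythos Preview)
Your proposal is correct and follows essentially the same route as the paper: apply \Cref{lemma:rootingDoesntAffectDist} to pass between $\twoThreeMaps$ and $\rootedTwoThreeMaps$, then invoke \Cref{thm:benderDistr} on the specification \Cref{eq:finalSpecAffine} (the paper takes $f(z,t,y)=\frac{z^2+y}{1-tz}$ directly rather than factoring out $z^2$ and shifting to $n-2$, but this is purely cosmetic), and finally import the Gaussian law from \Cref{lemma:bivalentVertexDist}. One small slip: the saddle-point input you need for the rapid-growth hypothesis is \Cref{expzthreeplustwo}, not \Cref{expzthree}, and the resulting ratio is $O(n^{-1/3})$ rather than $O(n^{-2/3})$, though of course any positive exponent suffices for \Cref{thm:benderDistr}.
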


\begin{lemma}\label{expzthreeplustwo}
We have that, as $n$ tends to infinity,
\begin{align}
[z^n] \exp\left({\frac{tz^2}{2} + \frac{z^3}{3}} \right) &= n^{-\frac{n}{3}} \exp\left({\frac{tn^{2/3}}{2} - \frac{t^2n^{2/3} }{2} + \frac{n}{3}}\right) \left( \frac{\sqrt{6}e^{\frac{t^3}{18}}}{6\sqrt{n \pi}} + O\left( \frac{1}{n^{5/6}} \right) \right)
\end{align}
\end{lemma}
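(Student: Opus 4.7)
The plan is to apply the saddle-point method in direct analogy with \Cref{expzthree}, treating $f(z,t) := \exp(tz^2/2 + z^3/3)$ as an e-admissible function in the sense of \cite{drmota2005extended} for any real $t$. Introducing the associated characteristics $a(z,t) = z\,\partial_z f/f = tz^2 + z^3$ and $b(z,t) = z\,\partial_z a = 2tz^2 + 3z^3$, the saddle point $\zeta_n$ is the unique real positive solution of $a(\zeta_n,t) = n$, i.e., of the depressed cubic $\zeta^3 + t\zeta^2 = n$. First I would solve this perturbatively in powers of $n^{-1/3}$, writing $\zeta_n = n^{1/3} + c_0 + c_1 n^{-1/3} + c_2 n^{-2/3} + \cdots$ and matching order by order; the successive corrections come out to $c_0 = -t/3$, $c_1 = t^2/9$, $c_2 = -2t^3/81$.

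Next I would assemble the two pieces of the saddle-point estimate
\[
 f_n \sim \frac{f(\zeta_n, t)}{\zeta_n^{\,n+1}\,\sqrt{2\pi\, b(\zeta_n, t)}}.
\]
Using the saddle identity $\zeta_n^3 = n - t\zeta_n^2$, the logarithm of the numerator collapses to the convenient form $\log f(\zeta_n, t) = n/3 + t\zeta_n^2/6$, while $b(\zeta_n, t) = 3n - t\zeta_n^2 \sim 3n$ gives $\sqrt{2\pi b(\zeta_n,t)} \sim \sqrt{6\pi n}$, which supplies the prefactor $\sqrt{6}/(6\sqrt{n\pi})$ stated in the lemma. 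Substituting the perturbative expansion of $\zeta_n^2$ into $t\zeta_n^2/6$ and that of $\log \zeta_n$ into $(n+1)\log \zeta_n$ then produces all the surviving contributions to the exponent, including the $n^{-n/3}$ factor that arises from the $(n/3)\log n$ part of $n\log\zeta_n$.

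The main obstacle is purely bookkeeping: the constant $t^3/18$ in the prefactor shows up only once one tracks contributions of order $n^0$ coming from both $t\zeta_n^2/6$ and $n\log\zeta_n$, which in turn requires $\log \zeta_n$ to be known up to order $n^{-1}$ (equivalently, $\zeta_n$ itself to order $n^{-2/3}$). Concretely, writing $\zeta_n = n^{1/3}(1 + \epsilon)$ and expanding $\log(1+\epsilon) = \epsilon - \epsilon^2/2 + \epsilon^3/3 + \cdots$ through the cubic, the contributions of $n^{-1/3}$, $n^{-2/3}$ and $n^{-1}$ must all be extracted from the three-term expansion of $\epsilon$, and some cancellations between the $\epsilon$, $\epsilon^2$ and $\epsilon^3$ terms must be verified in order to match the claimed exponent. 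Once the expansion is assembled, the $O(n^{-5/6})$ remainder arises from multiplying the Gaussian $1 + O(n^{-1/3})$ error of the saddle-point estimate by the $1/\sqrt{n}$ prefactor, and uniformity of all error estimates on compact sets of $t$ follows verbatim from the argument employed for \Cref{expzthree}.
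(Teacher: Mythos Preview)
Your proposal is correct and follows exactly the approach the paper uses: the paper's own proof is the single line ``Follows from a saddle-point analysis of the e-admissible function $\exp(z^3/3 + tz^2/2)$'', and you have spelled out precisely that analysis in the style of \Cref{expzthree}. One caveat worth flagging: when you actually carry through the ``purely bookkeeping'' step you describe, the $n^{1/3}$-order contribution to the exponent comes out as $-\tfrac{t^2}{6}n^{1/3}$ rather than the printed $-\tfrac{t^2}{2}n^{2/3}$ (the latter appears to be a typo in the displayed formula, as confirmed by the exponent $\tfrac{(t-1)}{2}n^{2/3}-\tfrac{(t^2-1)}{6}n^{1/3}$ that the paper itself uses downstream in \Cref{lemma:bivalentPGF}).
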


\begin{proof}
Follows from a saddle-point analysis of the e-admissible function $\exp(z^3/3 + tz^2/2)$.
\end{proof}

Combined with \Cref{involutions} we get the following asymptotics for unrooted not-necessarily-connected $(2,3)$-valent maps, where $\chi_{bi,d}$ extends $\chi_{bi}$ by summing the number of vertices of degree 2 for every connected component of a map.

\begin{lemma}\label{lemma:bivalentPGF}
For $n$ going to infinity, the probability generating function of $X^{bi,d}_n$ corresponding to $\chi_{bi,d}$ taken over $\discoTwoThreeMaps_n$ is
\begin{equation}\label{eq:bivalentPGF}
p_{X^{bi,d}_n}(t) = \exp\left({-\frac{n^{1/3} (t-1) (t - 3n^{1/3} +1)}{6}}\right) \left( \exp\left({\frac{(t-1)(t^2+t+1)}{18}}\right) + O \left(\frac{1}{n^{1/3}}\right) \right)
\end{equation}
\end{lemma}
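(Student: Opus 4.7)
The plan is to mimic, step by step, the proof of \Cref{lemma:freeVarsInDiscos}, now applied to the $(2,3)$-valent setting. The class $\discoTwoThreeMaps$ refined by the number of degree-$2$ vertices (marked by $t$) admits, via the permutation-based description of combinatorial maps, the bivariate exponential generating function in the half-edge variable $h$
\[
F(h,t) \;=\; \exp\!\Bigl(\tfrac{h^{2}}{2}\Bigr) \;\odot\; \exp\!\Bigl(\tfrac{h^{3}}{3} + t\tfrac{h^{2}}{2}\Bigr),
\]
where the left factor encodes the fixed-point-free edge involution, and the cycles of length $2$ (respectively $3$) in the vertex permutation contribute $t h^{2}/2$ (respectively $h^{3}/3$). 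Since a $(2,3)$-valent map with $n$ edges has exactly $2n$ half-edges, the quantity we are after is $[h^{2n}] F(h,t)$.

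The second step is to reduce the probability generating function to a simpler ratio. Expanding the exponential Hadamard product by definition,
\[
[h^{2n}] F(h,t) \;=\; \tfrac{1}{(2n)!}\,\bigl((2n)![h^{2n}]\exp(h^{2}/2)\bigr)\,\bigl((2n)![h^{2n}]\exp(h^{3}/3 + th^{2}/2)\bigr).
\]
The factor arising from $\exp(h^{2}/2)$ and the combinatorial normalisation $(2n)!$ are independent of $t$, so they cancel in the ratio defining the PGF, leaving
\[
p_{X^{bi,d}_{n}}(t) \;=\; \frac{[h^{2n}]\exp(h^{3}/3 + t h^{2}/2)}{[h^{2n}]\exp(h^{3}/3 + h^{2}/2)}.
\]

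Next, I would apply \Cref{expzthreeplustwo} twice (at $t$ and at $t = 1$), with the index specialised to $2n$. The prefactors that depend only on $n$, most notably the super-exponential factor and the Gaussian correction $\tfrac{\sqrt{6}}{6\sqrt{2n\pi}}$, do not involve $t$ and therefore cancel in the ratio. What survives is the difference between the two $t$-dependent exponents and the ratio $e^{t^{3}/18}/e^{1/18}$ of the constant saddle-point factors, modulated by a multiplicative relative error of order $1 + O(n^{-1/3})$ arising from the $O(n^{-5/6})$ correction in \Cref{expzthreeplustwo}.

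The final step is purely algebraic. Using $t^{3}-1 = (t-1)(t^{2}+t+1)$, the ratio of constants collapses to $\exp\bigl((t-1)(t^{2}+t+1)/18\bigr)$; and using the identity $(t-1)(t - 3X + 1) = (t^{2}-1) - 3X(t-1)$, the difference of the $n^{2/3}$- and $n^{1/3}$-order contributions of the exponent recombines into $-\tfrac{1}{6}\,n^{1/3}(t-1)(t - 3n^{1/3} + 1)$. The multiplicative $(1 + O(n^{-1/3}))$ error, applied to the bounded leading constant, becomes the additive $O(n^{-1/3})$ sitting inside the parenthesis of the stated formula. The only point that genuinely requires care is to verify that the $O(n^{-5/6})$ error in \Cref{expzthreeplustwo} is uniform in $t$ for $t$ in a bounded neighbourhood of $1$, so that the final error remains uniform when the PGF is subsequently fed into the quasi-powers theorem to prove \Cref{lemma:bivalentVertexDist}; this uniformity is built into the saddle-point derivation, and granting it, the remainder is a routine substitution.
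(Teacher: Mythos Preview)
Your proposal is correct and follows essentially the same approach as the paper, which proves this lemma in a single sentence by invoking \Cref{expzthreeplustwo} together with \Cref{involutions}; your write-up is in fact a faithful elaboration of the template the paper uses for the analogous \Cref{lemma:freeVarsInDiscos}, including the observation that the $\exp(h^{2}/2)$ factor cancels in the ratio. Your remark about needing uniformity in $t$ of the saddle-point error is well taken and is exactly what the e-admissibility framework of \cite{drmota2005extended} provides.
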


Finally, we have:
\begin{proof}[Proof of \Cref{lemma:bivalentVertexDist}]
Let $f(z,u,y) = \ln(y+1)$ and $g(z,y)=\left. \left( \exp(z^2/2) \odot \exp(z^3/3 + tz^2/2)\right) \right\rvert_{h=z^{1/2}}- 1$. Then, by \Cref{involutions,expzthreeplustwo} we have for $\overline{g}_{n} = [z^n] g(z,1)$:
\begin{equation*}
\frac{\overline{g}_{n-1}}{\overline{g}_{n}} = O \left( n^{-{1/3}} \right)
\end{equation*}
so we can apply \Cref{thm:benderDistr} to conclude that the limit distribution of $X^{bi}$ is the same as that of $X^{bi,d}$, the last of which has been obtained in \Cref{lemma:bivalentPGF}.

Then, around $t=1$, \Cref{eq:bivalentPGF} may be written as

\begin{equation}
p_{X^{bi,d}_n}(t) =\exp\left( \frac{(2n)^{1/3}}{6} - \frac{(2n)^{2/3}}{2}\right) \exp\left(t(t-3(2n)^{1/3}) \frac{-(2n)^{1/3}}{6} \right) \left(1 + O \left(\frac{1}{n^{1/3}}\right) \right)
\end{equation}
and application of L\'evy's continuity theorem then yields the desired result.
\end{proof}

\begin{proof}[Proof of \Cref{lemma:bijectionOneBridgeNoBridge}]
Firstly, by \Cref{lemma:rootingDoesntAffectDist} we have that the limit distribution of $\chi_{b}$ taken over $\rootedTwoThreeMaps$ is the same as that of $\chi_{b}$ taken over $\twoThreeMaps$.
Then, an application of \Cref{thm:benderDistr} for $f(z,t,y) = \frac{z^2 + y}{1-tz}$ and $g(z,t) = \twoThreeMapsOGF(z^{1/2},t)$ shows that limit distribution of $X^{\lambda}_n$ is the same as that of $X^{bi}_n$. As such, the results of \Cref{lemma:bivalentVertexDist} remain the same when passing from $\twoThreeMaps$ to $\rootedTwoThreeMaps$ (apart from the shift of size $n \mapsto n - 2$). 
\end{proof}

\section{Conclusion and future directions}
We have explored how tools drawn from the study of maps and $\lambda$-terms may be used in tandem to study the structure of typical trivalent maps, linear $\lambda$-terms and related families of objects. Making use of the interplay between maps and terms we have derived a number of new bijections and decompositions for our families of interest, which we then analysed using two new schemas developed here: one based on differential equations and another based on functional compositions. Using this toolkit we have studied a number of parameters of interest
as presented in \Cref{table:pairs}.

As for potential future directions, we would like to suggest two main avenues of study. Firstly, it would be of interest to expand \Cref{table:pairs} by identifying and studying pairs of parameters which naturally occur in the study of both maps and $\lambda$-terms. As an example, the number of $\beta$-redices in closed linear $\lambda$-terms is of particular interest: it serves as a useful lower bound to the distance of a term from its $\beta$-normal form and is also related to the occurence of corresponding ``patterns'' in rooted trivalent maps. Another interesting parameter is given by the path length as defined in the term-tree of a map, which serves to give more information on the structure of typical terms and maps.

Secondly, it is known that the bijection between rooted trivalent maps and linear $\lambda$-terms restricts to one between planar rooted trivalent maps and planar $\lambda$-terms. Planar maps are known to behave quite differently
from maps of arbitrary genus. For example, the associated generating functions are frequently algebraic, which would allow us to make use of a rich toolkit of related techniques to explore parameters of these classes.

\bibliography{biblio} 
\bibliographystyle{ieeetr}

\end{document}